\documentclass{amsart}
\usepackage{amsmath}
\usepackage{amssymb}
\usepackage{latexsym}
 \numberwithin{equation}{section}
\newtheorem{theorem}{Theorem}

\newtheorem{thm}[theorem]{Theorem}
\newtheorem{cor}[theorem]{Corollary}
\newtheorem{lem}[theorem]{Lemma}
\newtheorem{prop}[theorem]{Proposition}
\theoremstyle{definition}
\newtheorem{defn}[theorem]{Definition}

\theoremstyle{remark}
\newtheorem{remark}[theorem]{Remark}
\renewenvironment{proof}[1][\proofname]{\par
  \normalfont
  \trivlist
  \item[\hskip\labelsep\itshape
    \bfseries{#1.}]\ignorespaces
}{%
  \qed\endtrivlist
}

\newcommand{\RR}{\mathbb R}
\newcommand{\NN}{\mathbb N}

\newcommand{\eps}{\varepsilon}
\newcommand{\lam}{\lambda}

\newcommand{\scp}[2]{\left\langle #1,#2\right\rangle}    
\newcommand{\bscp}[2]{\big\langle #1,#2\big\rangle}     
\newcommand{\abs}[1]{\left\vert#1\right\vert}

\newcommand{\nabs}[1]{\vert#1\vert}
\newcommand{\bigabs}[1]{\big\vert#1\big\vert}

\newcommand{\norm}[1]{\left\|#1\right\|}

\newcommand{\mysetr}[2] {\left\{#1\,\left|\,#2\right.\right\}}
\newcommand{\mysetl}[2] {\left\{\left.#1\,\right|\,#2\right\}}

\newcommand{\To}{\longrightarrow}
\newcommand{\mbf}[1]{\boldsymbol{#1}}
\newcommand{\cU}{\mathcal{U}}
\newcommand{\cV}{\mathcal{V}}
\newcommand{\loc}{\text{loc}}
\newcommand{\s}{{\sigma}}

\newcommand{\R}{{\mathbb R}}
\newcommand{\N}{{\mathbb N}}
\newcommand{\cR}{\mathcal R}
\newcommand{\ups}{\Upsilon_{{\mathcal R}}^p(\R^{M\times N})}
\newcommand{\DM}{{\rm DM}^p_{\mathcal R}(\O;\R^{M\times N})}
\newcommand{\DMS}{{\rm DM}^p_{\mathcal R,S}(\O;\R^{M\times N})}
\newcommand{\BGDM}{{\rm BGDM}^p_{\mathcal R}(\O;\R^{M\times N})}
\newcommand{\GDM}{{\rm GDM}^p_{\mathcal R}(\O;\R^{M\times N})}
\newcommand{\be}{\begin{eqnarray}}
\newcommand{\com}{{\beta_{{\mathcal R}}\R^{M\times N}}}
\newcommand{\ee}{\end{eqnarray}}
\renewcommand{\O}{\Omega}
\newcommand{\rca}{{\rm rca}}
\newcommand{\md}{{\rm d}}
\newcommand{\wto}{{\rightharpoonup}}
\begin{document}
\begin{sloppypar}

\title[Oscillations and concentrations up to the boundary]%
{Oscillations and concentrations in sequences of gradients up to the boundary}\thanks{ The work of M.K. was
supported by the grants  IAA 1075402 (GA AV \v{C}R), P201/10/0357 (GA \v{C}R), and
VZ6840770021 (M\v{S}MT \v{C}R). }


\author{Stefan Kr\"omer}
\address{Mathematisches Institut, Universit\"at zu K\"oln, 50923 K\"oln, Germany\\
{\tt skroemer@math.uni-koeln.de}}
\author{Martin Kru\v{z}\'{\i}k}
\address{Institute of Information Theory and Automation,
Academy of Sciences of the Czech Republic, Pod vod\'{a}renskou
v\v{e}\v{z}\'{\i}~4, CZ-182~08~Praha~8, Czech Republic (corresponding
address) \& Faculty of Civil Engineering, Czech Technical
University, Th\'{a}kurova 7, CZ-166~ 29~Praha~6, Czech Republic\\  {\tt
kruzik@utia.cas.cz} }

\begin{abstract}
Oscillations and concentrations in sequences of
gradients $\{\nabla u_k\}$, bounded in $L^p(\O;\R^{M\times N})$
 if $p>1$ and $\O\subset\R^n$ is a bounded domain with
the  extension property in $W^{1,p}$, and their interaction with local integral functionals
can be described by a generalization of Young measures due to DiPerna and Majda.
We characterize such DiPerna-Majda measures, thereby extending a result by Ka\l amajska and Kru\v{z}\'{\i}k \cite{KaKru08a}, 
where the full characterization was possible only for sequences subject to a fixed Dirichlet boundary condition. As an application we state a relaxation result 
for noncoercive multiple-integral functionals. 
\end{abstract}

%
\subjclass{49J45, 35B05}
\keywords {sequences  of gradients, concentrations, oscillations,
quasiconvexity.}

\maketitle

\section{Introduction}
Oscillations and/or concentrations in weakly convergent sequences appear in many problems in the calculus of variations,  partial differential equations, or  optimal
control theory, which admit  only $L^p$ but not $L^\infty$ a priori estimates.
 Young measures \cite{y} successfully capture oscillatory behavior  of
sequences, however,  they completely miss concentrations. There are several available tools to deal with concentrations.
They can be considered as generalization of Young measures, see for example Alibert's and Bouchitt\'{e}'s approach  \cite{ab}, DiPerna's and Majda's treatment of concentrations \cite{DiPeMaj87a}, or  Fonseca's method described in \cite{fonseca}. An overview can be found in \cite{r,tartar1}.  Moreover, in many cases,
we are interested in oscillation/concentration effects generated by sequences of gradients. Oscillatory behavior of gradients was described by Kinderlehrer and Pedregal \cite{k-p1,k-p} in terms of  gradient  Young measures, cf.~also \cite{pedregal}.
 The  first attempt to characterize both
oscillations and concentrations in sequences of gradients is due
to Fonseca, M\"{u}ller, and Pedregal \cite{fmp}. They dealt with a special situation of $\{g(\cdot) v(\nabla u_k(\cdot))\}_{k\in\N}$ where $v$ is positively $p$-homogeneous, $u_k\in W^{1,p}(\O;\R^m)$, $p>1$, with  $g$ continuous and   vanishing on $\partial\O$. 
Later on, a characterization of oscillation/concentration effects in terms of  DiPerna's and Majda's generalization
of Young measures was given in \cite{KaKru08a} for arbitrary integrands and in \cite{ifmk} for sequences living in the kernel of a first-order differential operator. Recently, Kristensen and Rindler \cite{kristensen-rindler} characterized oscillation/concentration effects in the case $p=1$. 
Nevertheless, a complete  analysis of  boundary effects generated by gradients is still missing. We refer to 
\cite{KaKru08a} for the case where $u_k=u+W^{1,p}_0(\O;\R^M)$ on the boundary of the domain. As already observed by Meyers \cite{meyers}, concentration effects at the boundary are closely related to the sequential weak lower semicontinuity of integral functionals $I:W^{1,p}(\O;\R^m)\to\R$: $I(u)=\int_\O v(\nabla u(x))\,\md x$ where  $v:\R^{m\times n}\to\R$ is continuous and such that $|v|\le C(1+|\cdot|^p)$ for some constant $C>0$.  Recently, the first author \cite{Kroe10d} stated an integral necessary and sufficient condition ensuring weak lower semicontinuity in $W^{1,p}$ which is equivalent to the one of Meyers, however, much easier to handle due to its local character. We also refer to \cite{BaZh90} where the weak lower semicontinuity is treated using the so-called Biting Lemma \cite{BrCha80a}.  

The aim of this contribution is to give necessary and sufficient conditions ensuring that a given DiPerna-Majda measure is generated by gradients without any restrictions on the generating sequence. In particular, we state a relaxation result for noncoercive integral functionals, see Theorem~\ref{relaxation} extending results by Dacorogna \cite{Da89B}. 
Let us mention that for coercive variational problems, i.e., $I(u)=\int_\O v(\nabla u(x))\,\md x$ with $c(-1+|U|^p)\le v(U)\le C(1+|U|^p)$, $p>1$,  minimizing sequences do not exhibit concentrations. In particular, if $\{u_k\}_{k\in\N}\subset W^{1,p}(\O;\R^M)$ is bounded and minimizing for $I$ then $\{|u_k|^p\}_{k\in\N}$ is equiintegrable. This is a consequence of the so-called decomposition lemma proved in \cite{fmp} and in an earlier version in \cite{Kri94a}. However,  for different growth and coercivity conditions, for instance if $N=M$, $v$ is finite on invertible matrices and satisfies  $c(-1+|U|^p+|U^{-1}|^p)\le v(U)\le C(-1+|U|^p+|U^{-1}|^p)$, the corresponding decomposition lemma is not available and appearance of concentrations in minimizing sequences cannot be a priori excluded \cite{BBMKGP11a}. 
We emphasize  that the aforementioned growth and coercivity conditions are relevant in nonlinear elasticity where $U$ is the deformation gradient and  $U^{-1}$ belongs to the so-called Seth-Hill family of strain measures see e.g.~\cite{ CuRa91a,MiLa01a}. In particular, $v(U)\to\infty$ if $\det U\to 0$. Hence, DiPerna-Majda measures can serve as a suitable tool for relaxation. We also refer to \cite{KrRo99a} for optimal control problems exhibiting concentrations and for their relaxation in terms of these measures including numerical approximation and to \cite{LMP07a} for a mathematical model of debonding  where concentration effects appear, as well.

\section{Notation and preliminaries}
Let us  start with a few definitions and with an explanation of our basic notation.
Having a bounded domain $\O\subset\R^N$ we denote by $C(\O)$ the space of continuous functions from $\O$ into $\R$.  Its subspace $C_0(\O)$ consists of functions in $C(\O)$ whose support is contained in $\O$. 
We write ``$\gamma$-almost all'' or ``$\gamma$-a.e.'' if  we mean ``up to a set with $\gamma$-measure zero''. If $\gamma$ is the $N$-dimensional Lebesgue measure and $M\subset\R^N$ we omit writing $\gamma$ in the notation.
Furthermore, $W^{1,p}(\O;\R^M)$, $1\le p<+\infty$  denotes the usual space of measurable mappings which are together with
their first (distributional) derivatives integrable to the $p$-th power.
The support of a measure $\sigma\in\ {\rm rca}(\O)$ is a smallest closed set $S$
such that $\sigma(A)=0$ if $S\cap A=\emptyset$.
We denote by `w-$\lim$' the weak limit and by $B_r(x_0)$ an open ball in $\R^N$ centered at $x_0$ and the radius $r>0$. Given a set $E$, we write $\chi_E$ for its characteristic function, i.e., $\chi_E=1$ on $E$ and $\chi_E=0$ on the complement of $E$. Moreover, if $E\subset \RR^N$ and $r>0$, we define the $r$-neighborhood of $E$ by $(E)_r:=\bigcup_{x\in E} B_r(x)$ . The dot product on $\R^N$ is defined as $a\cdot b:=\sum_{i=1}^N a_i b_i$, and analogously on $\R^{M\times N}$. Finally, if $a\in\R^M$ and $b\in\R^N$ then  $a\otimes b\in\R^{M\times N}$ with $(a\otimes b)_{ij}=a_i b_j$, and $\mathbb{I}$ denotes the identity matrix.

\subsection{Global assumptions}

Unless stated otherwise, the following is assumed throughout the article:
\begin{align*}
&\begin{aligned}
	&1<p<\infty,~~M\in\NN,~~N\in \NN~~\text{with}~~N\geq 2,
\end{aligned}\label{H1}\tag{H1}\\
&\begin{aligned}
	&\text{$\O\subset \RR^N$ is open and bounded with boundary of class $C^1$,}
\end{aligned}\label{H2}\tag{H2}
\intertext{and}
&\begin{aligned}
	&\text{$\cR$ is a ring of bounded, continuous functions $v_0:\RR^{M\times N}\to \RR$, such that}\\
	&\begin{alignedat}[t]{2}
	&\text{(i)} && \text{$\cR$ is a complete and separable subset of $L^\infty(\RR^{M\times N})$}, \\
	&\text{(ii)} && \text{$C_0(\RR^{M\times N})\subset \cR$ and $1\in \cR$,}\\
	&\text{(iii)} &~~& v_0(\,\cdot \,Q)\in \cR~~\text{for every $v_0\in\cR$ and every $Q\in SO(N)$, and}\\
	&\text{(iv)} && \text{\eqref{v0ltuc} holds for each $v_0\in\cR$,}
	\end{alignedat}
\end{aligned}\label{H3}\tag{H3}
\end{align*}
i.e., 
\begin{equation}\label{v0ltuc}
	\begin{aligned}
	&\text{there exists $\alpha=\alpha(v_0):[0,\infty)\to [0,\infty)$ continuous with $\alpha(0)=0$ s.t.}\\
	&\abs{v_0(s)-v_0(t)}\leq \alpha\Big(\frac{\abs{s-t}}{1+\abs{s}+\abs{t}}\Big)~~\text{for every $s,t\in \RR^{M\times N}$.}
	\end{aligned}
\end{equation}
\begin{remark}
Neither (ii) nor (iii) are real restrictions, since we can always 
extend a given ring to achieve this artificially. 
\end{remark}
\begin{remark} 
A nontrivial example for a function $v_0$ satisfying \eqref{v0ltuc} is 
$v_0(s):=\sin( \log (1+|s|^2))$, $s\in\RR^{M\times N}$. We will use \eqref{v0ltuc} usually in form 
of the equivalent condition \eqref{vLpL1uc} derived in Lemma~\ref{lem:uc} below. 
Without the technical assumption \eqref{vLpL1uc}, a lot of our arguments break down; in particular, it is then no longer clear if 
$p$-qscb integrands (see Definition~\ref{def:qscb} below) are still precisely those that give rise to functionals that are weakly lower semicontinuous along purely concentrating sequences,
which is the cornerstone of our discussion of the boundary.
\end{remark}

\subsection{DiPerna-Majda measures}

In the context of DiPerna-Majda measures, we rely on the notation listed below.
For more background information, the reader is referred to \cite{r,KaKru08a} and \cite{Kru10a}.
\begin{itemize}

	\item ${\ups}:=\mysetl{v:\RR^{M\times N}\to \RR}{v(s)=v_0(s)(1+\abs{s}^p)~~\text{for a $v_0\in \cR$}}$.
	
	\item ${\com}$ denotes the compactification of $\RR^{M\times N}$ corresponding to $\cR$, i.e., 
	a compact set into which $\RR^{M\times N}$ is embedded homeomorphically and densely, such that
	each $v_0\in \cR$ has a unique continuous extension onto $\com$.
	Since we assume $\cR$ to be separable, the topology of $\com$ is metrizable.
	For more details, the reader is referred to \cite{engelking}.
	
	\item 
	{\rm rca}$(S)$ denotes the set of regular countably additive set functions on the Borel $\s$-algebra on a metrizable set  
	$S$ (cf.~\cite{d-s}), and its subset {\rm rca}$^+_1(S)$ denotes regular probability measures on a set $S$.	
	
	\item For $v_0\in \cR$ and $\hat{\nu}\in \rca(\com)$, we write
	$$
		\begin{alignedat}{2}
			&\bscp{\hat{\nu}}{v_0}&&:=\int_\com v_0(s)\,\hat{\nu}(ds),\\
			&\bscp{\hat{\nu}}{v_0}_\infty&&:=\int_{\com\setminus \RR^{M\times N}} v_0(s)\,\hat{\nu}(ds).
		\end{alignedat}
	$$		
	
	\item ${\sigma_s}$ and ${d_\sigma}$, respectively, denote the singular part and the
	density of the absolutely continuous part of $\sigma\in \rca(\overline{\O})$, with respect to Lebesgue decomposition. 
	
	\item For $\sigma\in \rca(\overline{\O})$, the space ${L_w^\infty(\overline{\O},\sigma;\rca(\com))}$ consists of those 
	functions $x\mapsto \hat{\nu}_x$ which are weak$^*$-measurable (i.e., $x\mapsto \bscp{\hat{\nu}_x}{v_0}$ is 
	Borel measurable for every $v_0\in \cR$) and $\sigma$-essentially bounded.
	
	\item Let $1\leq p<\infty$, let $(U_n)\subset L^p(\O;\RR^{M\times N})$ be a bounded sequence,
	and let $\sigma\in \rca(\overline{\O})$ and $\hat{\nu}\in L_w^\infty(\overline{\O},\sigma;\rca(\com))$.
	We call $(\sigma,\hat{\nu})$ the \emph{DiPerna-Majda measure generated by $(U_n)$}, if
	\begin{equation}\label{DMgenerate}
		\int_\O \varphi(x)v(U_n(x))\,dx\underset{n\to\infty}{\To}
		\int_{\overline{\O}} \varphi(x)\, \bscp{\hat{\nu}_x}{v_0}\,\sigma(dx),
	\end{equation}
	for every $\varphi\in C(\overline{\O})$ and every $v_0\in\cR$, with $v(\cdot):=v_0(\cdot)(1+\abs{\cdot}^p)$.
	Every bounded sequence in $L^p(\O;\RR^{M\times N})$ has a subsequence which generates a DiPerna-Majda measure, 
	see \cite{DiPeMaj87a}.
	
	\item The set of all DiPerna-Majda-measures generated by a bounded sequence in $L^p(\O;\RR^{M\times N})$ 
	is denoted by ${\DM}$. 
	
	\item The set of all DiPerna-Majda-measures in $\DM$ generated by gradients, i.e., 
	by $(\nabla u_n)$ for a bounded sequence $(u_n)\subset W^{1,p}(\O;\RR^M)$, is denoted by ${\GDM}$.
	
\end{itemize}
In addition, we recall the following two general results on DiPerna-Majda-measures:
\begin{prop}[\cite{KruRou97a}]\label{prop:DMchar}
Let $1\leq p<\infty$, let $\O\subset\RR^N$ be a bounded open domain such that
$|\partial\O | =0$, let $\cR$ be a separable complete subring of
the ring of all continuous bounded functions on $\RR^{M\times N}$
and let $(\sigma,\hat{\nu})\in \rca(\overline{\O})\times L^{\infty}_{w}(\overline{\O},\sigma; \rca(\com))$.
Then $(\sigma,\hat{\nu})\in \DM$ if and only 
if all of the following conditions are satisfied:
\begin{enumerate}
\item[(i)] $\sigma\geq 0$;
\item[(ii)] $\bar{\sigma}\in \rca(\overline{\O})$, $\bar{\sigma}(dx):=\hat{\nu}_x(\RR^{M\times N})\,\sigma(dx)$,
is absolutely continuous with respect to the Lebesgue measure;
\item[(iii)] for a.a. $x\in\O$,
$$
	\qquad \hat{\nu}_x(\RR^{M\times N})>0,\quad\text{and}\quad 
	d_{\bar{\sigma}}(x)=\left(\int_{\RR^{M\times N}}
	\frac{\hat\nu_x(ds)}{1+\abs{s}^p}\right)^{-1} \hat{\nu}_x(\RR^{M\times N});
$$
\item[(iv)] for $\sigma$-a.a. $x\in\overline{\O}$, $\hat{\nu}_x\geq 0$ and
$\hat{\nu}_x(\com)=1$.
\end{enumerate}
\end{prop}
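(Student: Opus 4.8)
The plan is to prove the two implications separately. \emph{Necessity.} Suppose $(\sigma,\hat\nu)$ is generated by a bounded sequence $(U_n)\subset L^p(\O;\RR^{M\times N})$ in the sense of \reff{DMgenerate}. Since \reff{DMgenerate} is invariant under the rescaling $(\sigma,\hat\nu)\mapsto(c\sigma,c^{-1}\hat\nu)$ with $c>0$ $\sigma$-measurable, the normalization $\hat\nu_x(\com)=1$ $\sigma$-a.e.\ in (iv) is a matter of convention, and with it in force testing \reff{DMgenerate} against $v_0\equiv1$ (so $v=1+\abs{\cdot}^p$) shows that $\sigma$ is the weak$^*$ limit in $\rca(\overline\O)$ of the nonnegative measures $(1+\abs{U_n}^p)\,\md x$; hence $\sigma\geq0$, which is (i). Testing \reff{DMgenerate} with $v_0$ ranging over a countable $L^\infty$-dense subset of the nonnegative cone of $\cR$ (available since $\cR$ is separable) and with $\varphi\geq0$ makes its right-hand side nonnegative, so $\scp{\hat\nu_x}{v_0}\geq0$ $\sigma$-a.e.\ for each such $v_0$ and therefore $\hat\nu_x\geq0$ $\sigma$-a.e., completing (iv). Finally I test with $v_0(s)=(1+\abs{s}^p)^{-1}\in C_0(\RR^{M\times N})\subset\cR$, so that $v\equiv1$; as the continuous extension of $v_0$ to $\com$ vanishes on $\com\setminus\RR^{M\times N}$, \reff{DMgenerate} reduces to $\big(\int_{\RR^{M\times N}}(1+\abs{s}^p)^{-1}\hat\nu_x(\md s)\big)\sigma(\md x)=\chi_\O(x)\,\md x$ on $\overline\O$. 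Uniqueness of the Lebesgue decomposition forces the bracketed integral to vanish $\sigma_s$-a.e., hence $\hat\nu_x(\RR^{M\times N})=0$ $\sigma_s$-a.e., so $\bar\sigma=\hat\nu_x(\RR^{M\times N})\sigma=\hat\nu_x(\RR^{M\times N})\sigma_{\rm ac}$ is absolutely continuous --- this is (ii); and since the bracketed integral cannot vanish on a Lebesgue-positive subset of $\O$ (else the identity fails there), it, and a fortiori $\hat\nu_x(\RR^{M\times N})$, is positive for a.a.\ $x\in\O$, and dividing in the last identity yields the density formula in (iii).

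\emph{Sufficiency.} Conversely, assume (i)--(iv). I would first reduce to elementary data. The convergence $\int_{\overline\O}\varphi\scp{\hat\nu'_x}{v_0}\sigma'(\md x)\to\int_{\overline\O}\varphi\scp{\hat\nu_x}{v_0}\sigma(\md x)$ for all $\varphi\in C(\overline\O)$ and $v_0\in\cR$ is metrizable on pairs (by separability of $\cR$), and the class of pairs generated by sequences that share an $L^p$-bound is sequentially closed for it: from $(\sigma^{(m)},\hat\nu^{(m)})$ generated by $(U^{(m)}_n)_n$ with $\sup_{m,n}\norm{U^{(m)}_n}_{L^p}<\infty$ and converging to $(\sigma,\hat\nu)$, a diagonal sequence $(U^{(m)}_{n(m)})_m$ is $L^p$-bounded and, after extraction, generates a DiPerna--Majda measure which must be $(\sigma,\hat\nu)$. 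Hence it suffices to generate a dense subclass of pairs obeying (i)--(iv). Writing $\sigma=\sigma_{\rm ac}+\sigma_s$ with $\sigma_{\rm ac}=g\,\mathcal{L}^N\lfloor\O$ and $g=\big(\int_{\RR^{M\times N}}(1+\abs{s}^p)^{-1}\hat\nu_x(\md s)\big)^{-1}$ by (iii), I would approximate: the weak$^*$-measurable field $x\mapsto\hat\nu_x$ by one that is constant on each cube of a fine partition of $\O$ and takes as values finite convex combinations of Dirac masses at points of $\RR^{M\times N}$ and of $\com\setminus\RR^{M\times N}$ (Dirac masses being weak$^*$-dense in the probability measures on the compact metrizable set $\com$; the parts supported in $\RR^{M\times N}$ and at infinity are approximated separately, with a truncation keeping $g$ controlled, so that $\hat\nu_x(\RR^{M\times N})>0$ survives), with $\sigma_{\rm ac}$ recomputed from (iii); and $\sigma_s$ by finite sums of weighted Dirac masses placed in cube interiors, with the accompanying $\hat\nu$ there supported in $\com\setminus\RR^{M\times N}$. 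Continuity of $s\mapsto(1+\abs{s}^p)^{-1}$ on $\com$ ensures the recomputed $\sigma_{\rm ac}$ converge, so the approximants converge to $(\sigma,\hat\nu)$.

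It then remains to build two types of block. On a cube $Q$: a homogeneous target $\hat\nu_x\equiv\hat\nu=\sum_j\theta_j\delta_{s_j}+\sum_k\eta_k\delta_{t_k}$ with $s_j\in\RR^{M\times N}$, $t_k\in\com\setminus\RR^{M\times N}$, $\theta_j,\eta_k\geq0$, $\sum_j\theta_j+\sum_k\eta_k=1$, $\sum_j\theta_j>0$, and $\sigma\lfloor Q=g\,\mathcal{L}^N\lfloor Q$ with $g=\big(\sum_j\theta_j(1+\abs{s_j}^p)^{-1}\big)^{-1}$; and, separately, a point mass $\sigma=c\,\delta_{x_0}$ at an interior point with $\hat\nu_{x_0}$ supported at infinity. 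For the first I pick $\zeta_{n,k}\in\RR^{M\times N}$ with $\zeta_{n,k}\to t_k$ in $\com$ as $n\to\infty$ (so $\abs{\zeta_{n,k}}\to\infty$; such sequences exist because $\RR^{M\times N}$ is dense in $\com$), let $U_n$ on $Q$ equal $s_j$ on a periodic set of volume fraction $\rho_j:=g\,\theta_j(1+\abs{s_j}^p)^{-1}$ (note $\sum_j\rho_j=1$), and then reset it to $\zeta_{n,k}$ on small sets $E_{n,k}$ spread uniformly over $Q$ with $\abs{E_{n,k}}\to0$ and $\abs{E_{n,k}}(1+\abs{\zeta_{n,k}}^p)\to g\,\eta_k\abs{Q}$; for the point mass one omits the oscillation and lets $E_{n,k}$ shrink to $x_0$. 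A direct computation using uniform continuity of $\varphi$, $v_0(\zeta_{n,k})\to v_0(t_k)$ and $\abs{E_{n,k}}\to0$ gives $\int_Q\varphi\,v(U_n)\to\int_Q\varphi\scp{\hat\nu}{v_0}g\,\md x$, first for $v_0\in C_0(\RR^{M\times N})\cup\{1\}$ and then for all $v_0\in\cR$ via an $\eps/3$ estimate from $\abs{v_0(U_n)}\leq\norm{v_0}_{L^\infty}$ and the uniform $L^1$-bound on $1+\abs{U_n}^p$. These sequences are $L^p$-bounded, uniformly also in the approximation parameter (testing \reff{DMgenerate} with $v_0(s)=\abs{s}^p(1+\abs{s}^p)^{-1}\in\cR$ shows $\norm{U_n}_{L^p}^p$ tends to a quantity depending continuously on the data), and since $L^p$ functions carry no matching constraint across interfaces, patching the blocks produces a bounded generating sequence on $\O$.

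The step I expect to be the real obstacle is the bookkeeping in the sufficiency part: assembling a single sequence that at once realizes the oscillation volume fractions prescribed by (iii), the concentration masses of $\hat\nu_x$ at infinity tested against both $\sigma_{\rm ac}$ and $\sigma_s$, and a uniform $L^p$-bound, alongside the supporting measure theory (metrizability, simple-function approximation of $x\mapsto\hat\nu_x$, and closedness of the generated class under diagonalization). Necessity, by contrast, is driven entirely by the two test integrands $v_0\equiv1$ and $v_0=(1+\abs{\cdot}^p)^{-1}$.
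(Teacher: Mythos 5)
The paper does not prove Proposition~\ref{prop:DMchar} at all --- it is quoted verbatim from \cite{KruRou97a} --- so there is no internal argument to compare against; judged on its own, your proof follows the standard route for such characterizations (testing \eqref{DMgenerate} with the two integrands $v_0\equiv 1$ and $v_0=(1+\abs{\cdot}^p)^{-1}$ for necessity; piecewise-homogenization plus an explicit oscillation--concentration block construction plus diagonal closedness, i.e.\ Proposition~\ref{prop:diag}, for sufficiency), and the outline is sound. Two remarks. First, you are right that the ``only if'' direction of (iv) cannot be extracted from \eqref{DMgenerate} alone: the identity is invariant under $(\sigma,\hat\nu)\mapsto(c\,\sigma,c^{-1}\hat\nu)$, so the normalization $\hat\nu_x(\com)=1$ (and with it the positivity of $\sigma$, since $v_0\equiv1$ only identifies the weak$^*$ limit of $(1+\abs{U_n}^p)\,dx$ with $\hat\nu_x(\com)\,\sigma(dx)$) is really part of the construction of the pair via Riesz representation and disintegration of the limit measure on $\overline\O\times\com$, not a consequence of \eqref{DMgenerate}; calling it a convention is honest, but you should say explicitly that this convention is what makes (i) and (iv) necessary. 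Second, the necessity of (ii)--(iii) via the Lebesgue decomposition of $\bigl(\int_{\RR^{M\times N}}(1+\abs{s}^p)^{-1}\hat\nu_x(ds)\bigr)\sigma(dx)=\chi_\O\,dx$ is correct and clean, and your block construction for sufficiency checks out (the volume fractions $\rho_j=g\,\theta_j(1+\abs{s_j}^p)^{-1}$ do sum to $1$, and $\abs{E_{n,k}}(1+\abs{\zeta_{n,k}}^p)\to g\,\eta_k\abs{Q}$ delivers the mass at infinity); the one place where real work remains, as you acknowledge, is the approximation of the weak$^*$-measurable family $x\mapsto\hat\nu_x$ by piecewise constant finite Dirac combinations in a way that keeps the recomputed density $d_\sigma$ under control and makes the approximants converge weak$^*$ --- this is most cleanly done by conditioning the measure $\hat\nu_x(ds)\,\sigma(dx)$ on $\overline\O\times\com$ on a dyadic filtration, but it is standard and does not threaten the argument.
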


\begin{remark}\label{rem:DMprop}
Proposition~\ref{prop:DMchar} (ii) implies that for $\sigma_s$-a.e.~$x\in\overline{\O}$,
$\hat{\nu}_x(\RR^{M\times N})=0$.
In particular, $\hat{\nu}_x(\RR^{M\times N})=0$ for $\sigma$-a.e.~$x\in\partial\O$ (provided that $\abs{\partial\O}=0$), whence
$$
	\bscp{\hat{\nu}_x}{v_0}=\bscp{\hat{\nu}_x}{v_0}_\infty\quad\text{for $\sigma$-a.e.~$x\in\partial\O$ and every $v_0\in\cR$}.
$$
Moreover, as a consequence of (ii) and (iii), the density of the absolutely continuous part of $\sigma$ with respect to the 
Lebesgue measure is given by
\begin{equation}\label{dsigma}
	d_\sigma(x)=\left(\int_{\RR^{M\times N}}	\frac{\hat\nu_x(ds)}{1+\abs{s}^p}\right)^{-1}.
\end{equation}
\end{remark}

\subsection{Quasiconvexity and $p$-quasi-subcritical growth from below}

Two notions related to the weak lower semicontinuity of integrals functionals on $W^{1,p}$ play an important role in our main result.
The first one is the well-known quasiconvexity of {\sc Morrey} \cite{Mo52a}:
\begin{defn}[quasiconvexity and quasiconvex envelope, e.g.~see \cite{Da89B}]
We say that a function $f:\RR^{M\times N}\to \RR$
is \emph{quasiconvex} if for some bounded regular domain $\Lambda\subset \RR^N$,
the integrals below are defined and
$$
	\int_\Lambda f(s+\nabla\varphi(y))\,dy\geq \int_\Lambda f(s)\,dy
$$
for every $s\in \RR^{M\times N}$ and every $\varphi\in W_0^{1,\infty}(\Lambda;\RR^M)$. 
The \emph{quasiconvex envelope $Qf$ of $f$} is defined as the largest quasiconvex function below $f$, i.e., for $s\in \RR^{M\times N}$,
$$
	Qf(s):=\sup\mysetr{g(s)}{\text{$g:\RR^{M\times N}\to \RR$ is quasiconvex and $g \leq f$}},
$$
with $Qf\equiv -\infty$ if there is no admissible $g$.
\end{defn}
If $f$ is locally bounded, its quasiconvex envelope can be represented as
\begin{equation}\label{Qfrep}
	Qf(s)=\inf \mysetl{\frac{1}{\abs{\Lambda}}\int_\Lambda f(s+\nabla \varphi(y))\,dy}{\varphi\in W_0^{1,\infty}(\Lambda;\RR^M)},
\end{equation}
see \cite{Da89B}.

The following \emph{$p$-quasi-subcritical growth condition from below}, related to weak lower semicontinuity along purely concentrating sequences, 
first appeared in \cite{Kroe10d} (although the term $p$-qscb was not used for it there).
Its relevance comes from the fact that integral functionals of the form $u\mapsto \int_\O f(x,\nabla u(x))\,dx$ (assuming a $p$-growth condition and some smoothness) are wlsc in $W^{1,p}$ if and only if the integrand is quasiconvex and $p$-qscb, by the main result of \cite{Kroe10d}.
\begin{defn}[$p$-iqscb, $\nu$-$p$-bqscb, $p$-qscb at $x$, $p$-qscb]\label{def:qscb}
Let $p\in [1,\infty)$, and $f:\RR^{M\times N}\to \RR$ be continuous.
We say that $f$ is \emph{$p$-inner quasi-subcritical from below} (\emph{$p$-iqscb})
if
\begin{equation*}
	\begin{aligned}
	&\text{for every $\eps>0$, there exists $C_\eps\geq 0$ such that}\\
	&\int_{B_1} f(\nabla \varphi)\,dx\geq -\eps \int_{B_1} \abs{\nabla \varphi}^p\,dx-C_\eps~~
	\text{for every $\varphi \in W_0^{1,p}(B_1;\RR^M)$}.
	\end{aligned}
\end{equation*}
Given a unit vector $\nu\in \RR^N$, we say that $f$ is \emph{$\nu$-$p$-boundary quasi-subcritical from below} (\emph{$\nu$-$p$-bqscb})
if 
\begin{equation*}
\begin{aligned}
	&\text{for every $\eps>0$, there exists $C_\eps\geq 0$ such that}\\
	&\int_{D_\nu} f(\nabla \varphi)\,dx\geq -\eps \int_{D_\nu} \abs{\nabla \varphi}^p\,dx-C_\eps~~
	\text{for every $\varphi \in W^{1,p}_0(B_1;\RR^M)$}.
\end{aligned}
\end{equation*}
Here, $B_1=B_1(0)$ is the open unit ball in $\RR^N$ and $D_\nu:=\{x\in B_1\mid x\cdot\nu<0\}$.
Moreover, given an open, bounded set $\O\subset \RR^N$ with boundary of class $C^1$,
we say that $f$ is called \emph{$p$-quasi-subcritical from below} \emph{at $x_0\in \overline{\O}$} (\emph{$p$-qscb at $x_0$}),
if, in case $x_0\in \O$, $f$ is $p$-iqscb, and,
in case $x_0\in \partial\O$, $f$ is $\nu(x_0)$-$p$-bqscb, 
where $\nu(x_0)$ denotes the outer normal to $\partial\O$ at $x_0$.
Finally, we say that $f$ is \emph{$p$-quasi-subcritical from below} (\emph{$p$-qscb}) if 
$f$ is $p$-iqscb and $\nu$-$p$-bqscb for every $\nu\in S^{N-1}$. 
\end{defn}
\begin{remark}\label{rem:p-qscb_and_Q}
Quasiconvex functions are automatically $p$-iqscb. However, there exist functions that are $p$-qscb, but whose quasiconvex envelope is not.
Take, for instance,
$$
	f:\RR^{2\times 2}\to \RR,~~f(s):=\max\big\{ \det(s),~-\abs{s}^{\frac{3}{2}} \big\}.
$$
In this case, $f$ is $2$-qscb, while $Qf=\det$ (which is not $2$-qscb, see \cite{Kroe10d}):\\
\noindent{\bf $\mbf{f}$ is $\mbf{2}$-qscb:}
The trivial estimate $f(s)\geq -\abs{s}^{\frac{3}{2}}$ implies that for every $\eps>0$, there exists $C_\eps>0$ such that
$$
	f(s)\geq -\eps \abs{s}^2-C_\eps,~~\text{for every $s\in \RR^{2\times 2}$.}
$$
In particular, $f$ is $2$-qscb.\\
\noindent{\bf $\mbf{Qf\geq \det}$:}
Since $f(s)\geq \det(s)$, and the determinant is quasiconvex, we have that $Qf(s)\geq \det(s)$
for every $s\in \RR^{2\times 2}$. \\
\noindent{\bf $\mbf{Qf\leq \det}$:} Let $s=(s_1|s_2)\in \RR^{2\times 2}$, with $s_1$ and $s_2$ denoting the first and second column of $s$, respectively. 
If $\det(s)\geq 0$, $f(s)=\det(s)$ and thus $Qf(s)\leq \det(s)$. 
In particular, $Qf(0|s_2)\leq \det(0|s_2)$ for arbitrary $s_2\in \RR^{2\times 1}$. 
If $\det(s)< 0$, $f(hs_1|s_2)=\det(hs_1|s_2)$ whenever $h\geq 0$ is large enough, since $\det(hs_1|s_2) \abs{(hs_1|s_2)}^{-\frac{3}{2}}=O(h^{-\frac{1}{2}})\to 0$ as $h\to\infty$. 
Thus, $Qf(hs_1|s_2)\leq \det(hs_1|s_2)$ both if $h=0$ and if $h$ is large. 
Moreover, quasiconvexity implies rank-$1$-convexity, 
whence $Qf$ is convex along the line $h\mapsto (hs_1|s_2)$. 
Since the determinant is affine along this line, we infer that
$Qf(hs_1|s_2)\leq \det(hs_1|s_2)$ for every $h\in [0,\infty)$, and for $h=1$, this yields that 
$Qf(s)\leq \det(s)$.
\end{remark}

\section{Results}

Our main result characterizes DiPerna-Majda measures generated by gradients:
\begin{thm}\label{thm:GDMchar}
Assume that \eqref{H1}--\eqref{H3} hold,
and let $(\sigma,\hat\nu)\in \DM$. 
Then $(\sigma,\hat\nu) \in \GDM$
if and only if the following four conditions are satisfied simultaneously:
\begin{enumerate}

\item[(i)] There exists $u\in W^{1,p}(\O;\RR^M)$ such that for a.e.~$x\in\O$,
\begin{equation*}
	\nabla u(x)=d_\sigma(x)\int_{\com} \frac{s}{1+\abs{s}^p}\hat{\nu}_x(ds);
\end{equation*}

\item[(ii)] With $u$ from (i), for a.e.~$x\in\O$ and every $v\in \ups$,
\begin{equation*}
	Qv(\nabla u(x))\leq d_\sigma(x)\int_{\com} \frac{v(s)}{1+\abs{s}^p}\hat{\nu}_x(ds);
\end{equation*}

\item[(iii)] For $\sigma$-a.e.~$x\in \O$ and every $v\in \ups$ such that $Q v>-\infty$,
\begin{equation*} 
	0\leq \int_{\com\setminus \RR^{M\times N}} \frac{v(s)}{1+\abs{s}^p}\hat{\nu}_x(ds);
\end{equation*} 

\item[(iv)] For $\sigma$-a.e.~$x\in \partial\O$ and every $v\in \ups$ which is $p$-qscb at $x$,
\begin{equation*} 
	0\leq \int_{\com\setminus \RR^{M\times N}} \frac{v(s)}{1+\abs{s}^p}\hat{\nu}_x(ds).
\end{equation*} 

\end{enumerate}
Here, $d_\sigma$ denotes the density of the absolutely continuous part of $\sigma$ with respect to the Lebesgue measure,
which is explicitly given by \eqref{dsigma}.
\end{thm}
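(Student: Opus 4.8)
\emph{Sketch of the intended proof.} The two implications are handled separately; necessity rests on known lower-semicontinuity theorems, while for sufficiency the only genuinely new construction is at the boundary.

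\emph{Necessity.} Suppose $(\sigma,\hat{\nu})$ is generated by $(\nabla u_n)$ with $u_n\wto u$ in $W^{1,p}(\O;\RR^M)$. Testing \eqref{DMgenerate} with the functions $s\mapsto s_{ij}(1+\abs{s}^p)^{-1}$, which lie in $C_0(\RR^{M\times N})\subset\cR$ and whose continuous extensions vanish on $\com\setminus\RR^{M\times N}$, identifies $\nabla u$ with the barycenter and, via \eqref{dsigma}, yields (i). For (ii), test \eqref{DMgenerate} with $\varphi\in C_0(\O)$, $\varphi\geq 0$, and $v\in\ups$ with $Qv>-\infty$: then $Qv\leq v$ is quasiconvex with two-sided $p$-growth, so $w\mapsto\int_\O\varphi\,Qv(\nabla w)$ is sequentially weakly lower semicontinuous, whence the generated measure dominates $Qv(\nabla u(x))\,dx$ on $\O$, and comparing Lebesgue-absolutely-continuous densities gives (ii). Comparing the \emph{singular} parts of that same inequality of measures, and recalling from Remark~\ref{rem:DMprop} that $\hat{\nu}_x$ is carried by $\com\setminus\RR^{M\times N}$ for $\sigma_s$-a.e.\ $x$, already yields (iii)--(iv) at $\sigma_s$-a.e.\ point of $\O$, resp.\ $\partial\O$. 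For the remaining (Lebesgue-a.e.) points of $\O$ in (iii) and all boundary points in (iv), I would localize: a blow-up of $(u_n)$ around the point $x_0$ produces a bounded sequence in $W^{1,p}$ whose gradients concentrate only at the centre, carrying the angular profile equal to $\hat{\nu}_{x_0}$ restricted to $\com\setminus\RR^{M\times N}$, on $B_1$ (interior) or on $D_{\nu(x_0)}$ (boundary); letting the concentration mass tend to infinity and using that $w\mapsto\int_{B_1}v(\nabla w)$ is bounded below on $W^{1,p}_0(B_1;\RR^M)$ whenever $Qv>-\infty$ (interior), resp.\ the characterization of $\nu$-$p$-bqscb integrands in \cite{Kroe10d} (boundary), forces the required sign of $\int_{\com\setminus\RR^{M\times N}}v(s)(1+\abs{s}^p)^{-1}\hat{\nu}_{x_0}(ds)$.

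\emph{Sufficiency.} Assume (i)--(iv). First I would reduce, by localization and diagonalization (using metrizability of $\com$ and weak-$*$ approximation of $(\sigma,\hat{\nu})$ by data piecewise homogeneous in $x$, glued with cut-offs over a fine partition of $\overline{\O}$), to the following elementary problem: for a frozen point $x_0$, realize the homogeneous DiPerna--Majda measure given by a probability measure $\hat{\nu}_{x_0}$ on $\com$ and the matrix $A:=\nabla u(x_0)$ on a small ball $B_r(x_0)\subset\O$ (resp.\, after $C^1$-flattening of $\partial\O$, on a half-ball $D_{\nu(x_0)}$), by a bounded sequence in $W^{1,p}$ that equals the affine map $y\mapsto u(x_0)+A(y-x_0)$ near the spherical part of the boundary, so that the cells glue. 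For an interior cell, conditions (i)--(iii) at $x_0$ are precisely the hypotheses under which such a sequence exists by the Dirichlet-data characterization of \cite{KaKru08a} (affine boundary data). Thus only boundary cells need new work.

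The boundary construction is where I expect the main difficulty. On $D_{\nu(x_0)}$, write $\hat{\nu}_{x_0}$ as its finite part on $\RR^{M\times N}$ plus its concentration part on $\com\setminus\RR^{M\times N}$. The finite part, together with any interior concentration, is again realized by \cite{KaKru08a} applied on the half-ball with affine boundary data everywhere, giving a bounded sequence that leaves the affine data undisturbed. Onto it I would superpose a boundary concentration built from profiles in $W^{1,p}_0(B_1;\RR^M)$, restricted to $D_\nu$, concentrating at a single point of the flat face of $D_{\nu(x_0)}$ (which after flattening is part of $\partial\O$), rescaled so their $L^p$-energy collapses onto a vanishing half-ball at that face: since these profiles vanish near the spherical part, the affine data there is unperturbed, while \eqref{DMgenerate} picks up exactly the prescribed singular term on $\partial\O$. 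The crucial and hardest step is to show that \emph{every} boundary concentration profile permitted by (iv) arises this way. This is a Hahn--Banach/bipolar argument: the concentration functionals $v_0\mapsto\bscp{\cdot}{v_0}_\infty$ produced by such purely (boundary-)concentrating gradient sequences on $D_\nu$ form a weak-$*$ closed convex cone, hence are characterized by the inequalities $\bscp{\cdot}{v_0}_\infty\geq 0$ indexed by those $v=v_0(1+\abs{\cdot}^p)\in\ups$ for which $w\mapsto\int_{D_\nu}v(\nabla w)$ is weakly lower semicontinuous along purely concentrating sequences in $W^{1,p}_0(B_1;\RR^M)$; by the main result of \cite{Kroe10d} these are exactly the $\nu$-$p$-bqscb $v$, matching the test class in (iv). The delicate part, absent in \cite{KaKru08a} because there the flat face carries prescribed Dirichlet data, is to run the bulk (interior-type) construction and the boundary-concentration construction simultaneously in a neighbourhood of $\partial\O$ without mutual interference, and to verify that their superposition still generates exactly the prescribed measure---this is what makes (i)--(iv), taken together, sufficient and not merely necessary. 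Letting the mesh tend to zero and diagonalizing then produces the required bounded $(u_n)\subset W^{1,p}(\O;\RR^M)$ generating $(\sigma,\hat{\nu})$.
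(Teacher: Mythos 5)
Your overall architecture matches the paper's: split the measure into an interior part and a boundary part, dispose of the interior via the fixed-boundary-data characterization of \cite{KaKru08a}, and realize the boundary concentrations by a Hahn--Banach duality pairing the cone of gradient point-concentration measures against exactly the $\nu$-$p$-bqscb integrands, using the lower-semicontinuity results of \cite{Kroe10d} in both directions. Three remarks on where your sketch diverges from, or falls short of, the actual argument.

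First, the ``non-interference'' issue you single out as the hardest step is resolved in the paper \emph{before} any construction, at the level of the measure itself: Proposition~\ref{prop:GDMsplit} decomposes $(\sigma,\hat\nu)$ into $(\sigma_b,\hat\nu_b)$ and $(\sigma_i,\hat\nu_i)$ via the sequence-level decomposition lemma of \cite{Kroe10d} (a boundary-concentrating piece supported in $(\partial\O)_{1/n}$ plus a piece that does not charge $\partial\O$), and the two generating sequences are then modified by cutoffs so that their supports are literally disjoint; the error is controlled by the uniform continuity \eqref{vLpL1uc}. So there is no need to run a bulk construction and a boundary concentration simultaneously, and no cell-by-cell gluing with affine data is used for the boundary part (the boundary sequence converges weakly to $0$, so gluing is trivial).

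Second, and this is the genuine gap: your reduction ``freeze $\hat\nu_{x_0}$ at a point'' is not justified as stated. The singular part of $\sigma$ on $\partial\O$ is a general measure and $x\mapsto\hat\nu_x$ is only $\sigma$-measurable, so one must \emph{average} $\hat\nu_x$ over small boundary patches to obtain finitely many point masses. But the test class in (iv) rotates with the outer normal $\nu(x)$, so a naive average of measures each satisfying (iv) at its own point need not satisfy (iv) at the chosen representative $x_{n,j}$. The paper's Proposition~\ref{prop:weakstarapprox} averages the \emph{rotated} measures $\bscp{\hat\nu_x}{v_0(\,\cdot\,R_x^{-1})}$, which preserves membership in $\BGDM$ by Lemma~\ref{lem:pqscbrotated}; this is precisely where hypothesis \eqref{H3}(iii) (closure of $\cR$ under $v_0\mapsto v_0(\cdot\,Q)$) and the uniform continuity \eqref{v0ltuc} enter, and your sketch does not account for it. Relatedly, your flattening of $\partial\O$ via a $C^1$ chart introduces the composition $\nabla u_n\,D\Phi$ inside the nonlinear integrand $v$, and controlling that error along a concentrating sequence again requires \eqref{vLpL1uc} (Proposition~\ref{prop:pointconcOtoD}); this cannot be waved through for a general $v\in\ups$.

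Third, your assertion that the set of realizable boundary concentration functionals is a weak$^*$-closed convex cone needs proof: convexity and the cone property follow from explicit translation/dilation constructions (Propositions~\ref{prop:Hxconvex} and the scaling $u_{h,n}(y)=h^{-1}u_n(hy)$ in Proposition~\ref{prop:HxDenseInAx}), while the closedness is supplied not by an abstract argument but by the diagonalization result Proposition~\ref{prop:diag} from \cite{KaKru08a}, applied after the point-mass approximation. With these three points supplied, your outline coincides with the paper's proof.
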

The proof is the content of Section~\ref{sec:nec} and Section~\ref{sec:suff}.

The above theorem can be used to prove the following relaxation result similar to \cite[Th.~9.1, 9.8]{Da89B}.

\begin{thm}\label{relaxation}
Assume that \eqref{H1}--\eqref{H3} hold, let $h_0\in C(\bar\O\times\com)$, let $h(x,s):=h_0(x,s)(1+|s|^p)$ and
assume that $h(x,\cdot)$ is $p$-qscb at $x$ for all $x\in\partial\O$. 
For $u\in W^{1,p}(\O;\R^M)$ we define
$$
	H(u):=\int_\O h(x,\nabla u)\,dx\quad\text{and}\quad QH(u):=\int_\O Qh(x,\nabla u)\,dx,
$$
with the quasiconvex envelope $Qh(x,\cdot)$ of $h(x,\cdot)$.
Then the following holds:
\begin{itemize}
\item[(i)] If $u_n\wto u$ in $W^{1,p}(\O;\R^M)$, then 
\begin{equation*}
\qquad\quad\liminf_{n\to\infty}H(u_n)\,dx\ge QH(u) .
\end{equation*}
\item[(ii)] 
For every $\eps>0$ and for every $\tilde u\in W^{1,p}(\O;\R^M)$, 
there exists a sequence $(\tilde u_n)\subset \tilde u+ W_0^{1,p}(\O;\R^M)$ such that 
$\tilde u_n\rightharpoonup \tilde u$ in $W^{1,p}(\O;\R^M)$, 
\begin{equation*}
	\qquad\quad\lim_{n\to\infty} \int_\O h(x,\nabla \tilde u_n)\,dx\leq 
	\left\{\begin{alignedat}[c]{2}
		&\int_\O Qh(x,\nabla \tilde u)\, dx+\eps &\quad&\text{if $\abs{E}=0$,}\\
		&-\eps^{-1}&\quad&\text{if $\abs{E}>0$,}
	\end{alignedat}\right.
\end{equation*}
where $E:=\mysetr{x\in\O}{Qh(x,\cdot)\equiv -\infty}$.
\end{itemize}
\end{thm}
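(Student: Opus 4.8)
The plan is to read off part~(i) from Theorem~\ref{thm:GDMchar} and to obtain part~(ii) from the classical (non-coercive) relaxation machinery, with an extra blow-down construction when $\abs{E}>0$.

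For (i), if $\abs{E}>0$ then $QH(u)=-\infty$ and there is nothing to prove, so assume $\abs{E}=0$. Given $u_n\wto u$ in $W^{1,p}$, pass to a subsequence (not relabeled) along which $H(u_n)\to\liminf_n H(u_n)$ and, since $(\nabla u_n)$ is bounded in $L^p$, $(\nabla u_n)$ generates a DiPerna-Majda measure $(\sigma,\hat\nu)\in\GDM$. Approximate $h_0$ uniformly on the compact set $\bar\O\times\com$ by finite sums $\sum_j\varphi_j(\cdot)\,v_{0,j}(\cdot)$ with $\varphi_j\in C(\bar\O)$, $v_{0,j}\in\cR$ (Stone--Weierstrass, using $1\in\cR$ and that $C(\bar\O)$ and $\cR$ separate the points of $\bar\O$ and $\com$); together with the uniform $L^p$-bound on $\nabla u_n$, \eqref{DMgenerate} then gives $H(u_n)\to\int_{\bar\O}\bscp{\hat\nu_x}{h_0(x,\cdot)}\,\sigma(dx)$. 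Testing with the components of the bounded map $s\mapsto s/(1+\abs{s}^p)\in C_0(\R^{M\times N})\subset\cR$ and comparing with $\nabla u_n\wto\nabla u$ in $L^p$ shows that $u$ has, up to an additive constant, the gradient of Theorem~\ref{thm:GDMchar}(i). Now split $\bar\O=\O\cup\partial\O$ and use the Lebesgue decomposition of $\sigma$: on the absolutely continuous interior part, Theorem~\ref{thm:GDMchar}(ii) yields $d_\sigma(x)\bscp{\hat\nu_x}{h_0(x,\cdot)}\ge Qh(x,\nabla u(x))$ for a.e.~$x$, hence a contribution $\ge QH(u)$; on the singular interior part and on $\partial\O$, Remark~\ref{rem:DMprop} forces $\hat\nu_x$ to live on $\com\setminus\R^{M\times N}$, so $\bscp{\hat\nu_x}{h_0(x,\cdot)}=\int_{\com\setminus\R^{M\times N}}\frac{h(x,s)}{1+\abs{s}^p}\hat\nu_x(ds)$, which is $\ge0$ by Theorem~\ref{thm:GDMchar}(iii) on $\O$ (here one uses $\abs{E}=0$ and the continuity of $h_0$ to make sure $Qh(x,\cdot)>-\infty$ wherever (iii) is invoked) and by Theorem~\ref{thm:GDMchar}(iv) on $\partial\O$ --- this last point is exactly where the hypothesis that $h(x,\cdot)$ be $p$-qscb at every $x\in\partial\O$ enters. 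Summing the three contributions gives $\liminf_n H(u_n)=\int_{\bar\O}\bscp{\hat\nu_x}{h_0(x,\cdot)}\,\sigma(dx)\ge QH(u)$.

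For (ii) with $\abs{E}=0$: here $h(x,\cdot)$ is continuous with $\abs{h(x,s)}\le\nnorm{h_0}_{C(\bar\O\times\com)}(1+\abs{s}^p)$ and $Qh(x,\cdot)$ is real-valued a.e., so this is the classical relaxation of an integral functional with prescribed boundary values, and one argues as in \cite[Th.~9.1, 9.8]{Da89B}: freeze the $x$-dependence of $h_0$ on small cubes up to an error $\eps$, on each cube invoke the representation \eqref{Qfrep} of $Qh$ together with a periodic extension and rescaling of a near-optimal perturbation in $W_0^{1,\infty}$, and glue; since all perturbations are compactly supported, the resulting $\tilde u_n$ lie in $\tilde u+W_0^{1,p}(\O;\R^M)$, converge weakly to $\tilde u$, and (using equi-integrability of $\abs{\nabla\tilde u_n}^p$ on each block to keep the gluing errors small) their energies converge to $\int_\O Qh(x,\nabla\tilde u)\,dx$. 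For (ii) with $\abs{E}>0$: fix a Lebesgue density point $x_0$ of $E$, so $Qh(x_0,\cdot)\equiv-\infty$; by \eqref{Qfrep}, for every $R>0$ there is $\psi_R\in W_0^{1,\infty}(B_1;\R^M)$ with $\frac{1}{\abs{B_1}}\int_{B_1}h(x_0,\nabla\tilde u(x_0)+\nabla\psi_R)\,dx<-R$, and by uniform continuity of $h_0$ on $\bar\O\times\com$ and approximate affinity of $\tilde u$ near $x_0$ the analogous average over $B_\rho(y)\subset\O$ with $y$ close to $x_0$ stays below $-R/2$ once $\rho$ is small; inserting rescaled copies of $\psi_R$ on finitely many disjoint such balls filling a fixed small ball of positive measure inside $\O$, extending by $\tilde u$ elsewhere, and letting $R\to\infty$ slowly (diagonalizing also over $\rho$ and over a piecewise affine approximation of $\tilde u$) produces $\tilde u_n\in\tilde u+W_0^{1,p}(\O;\R^M)$ with $\tilde u_n\wto\tilde u$ and $\int_\O h(x,\nabla\tilde u_n)\,dx\to-\infty$, so the bound $-\eps^{-1}$ holds for $n$ large.

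The step I expect to be most delicate is the singular interior part of (i): Theorem~\ref{thm:GDMchar}(iii) only supplies the needed sign for integrands $v$ with $Qv>-\infty$, while $\sigma_s$ may a priori charge the Lebesgue-null set on which $Qh(x,\cdot)\equiv-\infty$; reconciling this --- essentially by using the continuity of $h_0$ on $\bar\O\times\com$ to argue that $Qh(x,\cdot)\equiv-\infty$ cannot occur on a Lebesgue-null but $\sigma_s$-charged subset of $\O$ without $\abs{E}$ already being positive --- is the part that genuinely needs care. In part~(ii), beyond invoking \cite{Da89B}, the only real work is the bookkeeping that preserves the Dirichlet condition and the weak limit $\tilde u$ through the localization and rescaling, which is routine.
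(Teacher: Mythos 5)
Your part (i) follows the paper's own route: pass to a subsequence generating a DiPerna--Majda measure, identify $u$ via condition (i) of Theorem~\ref{thm:GDMchar}, and estimate the absolutely continuous interior part, the singular interior part and the boundary part of the limit by conditions (ii), (iii) and (iv) respectively; the boundary hypothesis on $h$ enters exactly where you place it. The delicate point you flag --- that (iii) is only available for integrands $v$ with $Qv>-\infty$, so a $\sigma_s$-charged subset of the Lebesgue-null set $E\cap\O$ is not covered --- is present in the paper's proof as well (the paper only remarks that the use of (ii) becomes trivial where $Qh(x,\cdot)\equiv-\infty$, which does not address the singular term), and your parenthetical suggestion does not actually close it; so you are no worse off than the published argument here, but neither version disposes of that case explicitly.

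For part (ii) you take a genuinely different route. The paper replaces $h$ by the coercive truncation $\max\{h,\tfrac1m\abs{\cdot}^p-m\}$, applies the classical coercive relaxation theorem to obtain recovery sequences for the truncated functional, uses $h\leq\max\{h,V_m\}$ to transfer the upper bound to $h$, and then sends $m\to\infty$ by monotone convergence on the exhausting sets $G(m,\eps)$; this treats $\abs{E}=0$ and $\abs{E}>0$ uniformly and outsources the entire construction to the standard theorem, whereas you redo the construction by hand. Your direct argument is plausible for $\abs{E}=0$, but in the case $\abs{E}>0$ the step ``letting $R\to\infty$ slowly \dots produces $\tilde u_n\wto\tilde u$ and $\int_\O h(x,\nabla\tilde u_n)\,dx\to-\infty$'' cannot work as stated: since $h(x,s)\geq-\nnorm{h_0}_\infty(1+\abs{s}^p)$, forcing the average of $h(x_0,\nabla\tilde u(x_0)+\nabla\psi_R)$ over $B_1$ below $-R$ forces $\nnorm{\nabla\psi_R}_{L^p(B_1)}^p\gtrsim R$, so the rescaled sums satisfy $\nnorm{\nabla(\tilde u_n-\tilde u)}_{L^p}^p\gtrsim R$, and any sequence along which the energy tends to $-\infty$ is unbounded in $W^{1,p}$ and cannot converge weakly to $\tilde u$. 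Since the target is only the fixed bound $-\eps^{-1}$, the repair is to fix $R=R(\eps)$ once and for all and then refine only the packing scale; with that correction (and a choice of Lebesgue point of $\nabla\tilde u$ in $E$) your construction goes through.
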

The proof is given in Section~\ref{sec:relax}.

\vspace*{1ex}

\begin{remark} ~\\ \vspace*{-2ex}
\begin{itemize}
\item[(i)] Theorem~\ref{relaxation} implies that $\inf H=\inf QH$ on $W^{1,p}(\O;\RR^M)$.
\item[(ii)] By Theorem~\ref{relaxation} (ii), $QH$ is an upper bound for the sequentially weakly lower semicontinuous (swlsc) envelope of $H$ in $W^{1,p}(\O;\RR^M)$.
Hence, if we \emph{assume} that $QH$ is swlsc, then 
$QH$ is the swlsc envelope of $H$. 
However, it may happen that $QH$ is not swlsc.
Of course, $Qh$ is always quasiconvex, but even if it is a fairly regular finite-valued function, it can fail to be $p$-qscb as illustrated in Remark~\ref{rem:p-qscb_and_Q}. 
\item[(iii)] In Theorem~\ref{relaxation} (ii), it is not always possible to obtain an ``exact'' recovery sequence, corresponding to $\eps=0$.
However, this phenomenon can only occur if we do not have $p$-coercivity, cf.~\cite[Ex.~9.3 and Th.~9.8]{Da89B}.
\item[(iv)] If $h(x,\cdot)$ 
is not $p$-qscb at some point $x_0\in\overline\O$,
then the swlsc envelope of $H$ in $W^{1,p}(\Omega;\RR^M)$ is identically $-\infty$.
More precisely, for every $u\in W^{1,p}(\O;\RR^M)$ 
and every $K\in\NN$, there exists a bounded sequence $(u_n)\subset  W^{1,p}(\O;\RR^M)$ such that
the support of $u_n-u$ in $\overline{\O}$ shrinks to $x_0$ (in particular, $u_n\rightharpoonup u$) and $\lim_{n\to\infty} H(u_n)\leq -K$.
This can be seen following the proof of \cite[Proposition 3.8]{Kroe10d}\footnotemark .
\end{itemize}%
\end{remark}%
\footnotetext{One has to change the dilation constant $\alpha_n$ employed there by a fixed factor, to obtain $\norm{\nabla u_n}_{L^p}=\frac{K}{\eps}+\frac{1}{2}$ instead of $\norm{\nabla u_n}_{L^p}=1$ (with our $K$ and $\eps$ from the context in \cite{Kroe10d}).}

\section{Auxiliary results for concentrating sequences and $p$-qscb functions}

A key problem for us is the treatment of non-affine parts of the boundary. Of course, we can use local maps to transform a neighborhood of a boundary point into a situation with locally affine boundary. However, in expressions involving nonlinear integrands $v$ (or $f$, as in the definition of $p$-qscb) and non-compact sets of test functions or sequences with concentrations, this introduces an error that (as far as we understand) cannot be controlled without suitable uniform continuity properties of $v$. In \cite{Kroe10d}, a $p$-Lipschitz condition was used for this purpose, but here, we rely on the more general property \eqref{vLpL1uc} related to our assumption \eqref{v0ltuc} in \eqref{H3} as follows:
\begin{lem} \label{lem:uc}
Let $1\leq p<\infty$, let $v_0:\RR^{M\times N}\to \RR$ be continuous and bounded, and let $v(s):=v_0(s)(1+\abs{s}^p)$ for $s\in \RR^{M\times N}$. Then \eqref{v0ltuc} holds if and only if
\begin{equation}
\begin{aligned}\label{vQuc}
	&\text{there exists $\beta:[0,\infty)\to [0,\infty)$ continuous with $\beta(0)=0$ such that}\\
	&\abs{v(s)- v(t)}\leq \beta\Big(\frac{\abs{s-t}}{1+\abs{s}+\abs{t}}\Big)(1+\abs{s}^p+\abs{t}^p)\\
	&\text{for every $s\in\RR^{M\times N}$ and every $Q\in\RR^{N\times N}$.}
\end{aligned}
\end{equation}
Moreover, if $\Lambda\subset \RR^N$ is measurable with $0<\abs{\Lambda}<\infty$, then \eqref{vQuc} is equivalent to the following uniform continuity of 
the Nemytskii operator $U\mapsto v\circ U$, $L^p\to L^1$, on bounded subsets of $L^p(\Lambda;\RR^{M\times N})$:
\begin{equation}
\begin{aligned}\label{vLpL1uc}
	&\text{there exists $\gamma:[0,\infty)\to [0,\infty)$ continuous with $\gamma(0)=0$ such that}\\
	&\norm{v\circ U-v\circ W}_{L^1}\leq \gamma\big(\norm{U-W}_{L^p}\big)\big(1+\norm{U}_{L^p}^p+\norm{W}_{L^p}^p\big),\\
	&\text{with all norms taken over $\Lambda$, for every $U,W \in L^p(\Lambda;\RR^{M\times N})$.}
	%
\end{aligned}
\end{equation}
\end{lem}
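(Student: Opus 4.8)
The plan is to establish the chain of equivalences \eqref{v0ltuc} $\Leftrightarrow$ \eqref{vQuc} $\Leftrightarrow$ \eqref{vLpL1uc}, working one implication at a time. For \eqref{v0ltuc} $\Rightarrow$ \eqref{vQuc}, the idea is to split $v(s)-v(t) = \big(v_0(s)-v_0(t)\big)(1+\abs{s}^p) + v_0(t)\big(\abs{s}^p-\abs{t}^p\big)$. The first term is handled directly by \eqref{v0ltuc} together with the trivial bound $1+\abs{s}^p \le 1+\abs{s}^p+\abs{t}^p$. For the second term, use $\abs{v_0(t)}\le \norm{v_0}_\infty$ and an elementary estimate of the form $\big|\abs{s}^p - \abs{t}^p\big| \le c_p \abs{s-t}\big(1+\abs{s}+\abs{t}\big)^{p-1}$, then bound $\abs{s-t}\big(1+\abs{s}+\abs{t}\big)^{p-1} = \frac{\abs{s-t}}{1+\abs{s}+\abs{t}}\big(1+\abs{s}+\abs{t}\big)^{p} \le \frac{\abs{s-t}}{1+\abs{s}+\abs{t}}\, 3^{p}\big(1+\abs{s}^p+\abs{t}^p\big)$; this produces a modulus of the form $\beta(r) = \alpha(r)(1+\cdots) + C r$, which one then replaces by a genuine continuous increasing modulus (bounded automatically since $v_0$ is bounded, so for $r$ away from $0$ one can just take $\beta$ constant). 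The converse \eqref{vQuc} $\Rightarrow$ \eqref{v0ltuc} is easier: dividing \eqref{vQuc} by $(1+\abs{s}^p)$, using $v_0(s)-v_0(t) = \frac{v(s)-v(t)}{1+\abs{s}^p} + v_0(t)\frac{\abs{t}^p-\abs{s}^p}{1+\abs{s}^p}$ and the same elementary inequality for $\big|\abs{s}^p-\abs{t}^p\big|$ to control the remainder term, gives back a modulus $\alpha$.

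For the equivalence with \eqref{vLpL1uc} on a fixed $\Lambda$ with $0<\abs{\Lambda}<\infty$, the direction \eqref{vQuc} $\Rightarrow$ \eqref{vLpL1uc} is the substantive one. Given $U,W\in L^p(\Lambda;\RR^{M\times N})$, apply \eqref{vQuc} pointwise to $s=U(x)$, $t=W(x)$ and integrate. One gets
\[
	\norm{v\circ U - v\circ W}_{L^1} \le \int_\Lambda \beta\Big(\tfrac{\abs{U-W}}{1+\abs{U}+\abs{W}}\Big)\big(1+\abs{U}^p+\abs{W}^p\big)\,dx.
\]
Now split $\Lambda$ into the set $A$ where $\abs{U-W}\le \delta^{1/(p+1)}$ (pointwise) and its complement $A^c$, for a parameter $\delta>0$ to be optimized at the end in terms of $\norm{U-W}_{L^p}$. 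On $A^c$ one uses Chebyshev, $\abs{A^c}\le \delta^{-p/(p+1)}\norm{U-W}_{L^p}^p$, and bounds $\beta$ by $\norm\beta_\infty$; combined with $\int_{A^c}(1+\abs{U}^p+\abs{W}^p)$ controlled via Hölder's inequality (splitting $1+\abs{U}^p+\abs{W}^p \in L^1$ with the indicator $\chi_{A^c}$) — here one needs a small extra argument since $1+\abs{U}^p+\abs{W}^p$ need not be uniformly integrable, so instead bound $\int_{A^c}(1+\abs{U}^p+\abs{W}^p)\le \abs{A^c} + \int_{A^c}(\abs{U}^p+\abs{W}^p)$ and for the latter integral use that it can be made small by controlling $\abs{A^c}$... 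Actually the clean route: on $A$, since $1+\abs{U}+\abs{W}\ge 1$, the argument of $\beta$ is at most $\delta^{1/(p+1)}$, so $\beta(\cdots)\le \omega(\delta)$ where $\omega(\delta):= \sup_{0\le r\le \delta^{1/(p+1)}}\beta(r)\to 0$; thus $\int_A \le \omega(\delta)\big(\abs{\Lambda}+\norm{U}_{L^p}^p+\norm{W}_{L^p}^p\big)$. On $A^c$, bound $\beta\le \norm\beta_\infty$ and the argument of $\beta$ crudely, and estimate $\int_{A^c}(1+\abs{U}^p+\abs{W}^p)$ using Hölder: $\int_{A^c} \abs{U}^p \le \norm{\,\abs{U}^p\,}_{L^{q}}\abs{A^c}^{1/q'}$ is problematic without higher integrability, so instead I will avoid Hölder and simply note that on $A^c$ we have $1 \le \delta^{-p/(p+1)}\abs{U-W}^p$, hence $\int_{A^c}(1+\abs{U}^p+\abs{W}^p)\,dx$ — rewrite using $(1+\abs{U}^p+\abs{W}^p)\le C_p(1+\abs{U-W}^p+\abs{W}^p)$ is still not uniformly small. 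The resolution: use instead the bound $\beta(r)(1+\abs{s}^p+\abs{t}^p)$ on $A^c$ directly as $\le \norm{\beta}_\infty \cdot C_p(\abs{U-W}^p \cdot \delta^{-p/(p+1)}\cdot\text{stuff})$; concretely on $A^c$, $1 \le \delta^{-p/(p+1)}\abs{U-W}^p$, and also $\abs{U}^p+\abs{W}^p \le C_p(\abs{U-W}^p + \abs{W}^p)$ with $\abs{W}^p \le (\text{the bound on }\norm{W}_{L^p})$... The cleanest fix, which is what I would write: bound $\beta(\tfrac{\abs{s-t}}{1+\abs s+\abs t})(1+\abs s^p+\abs t^p)$ on $A^c$ by $\norm\beta_\infty \cdot 2(1+\abs s^p+\abs t^p)$ and then, since $1 \le \delta^{-p/(p+1)}\abs{s-t}^p$ there, by $\norm\beta_\infty \cdot 2\,\delta^{-p/(p+1)}\abs{s-t}^p(1+\abs s^p+\abs t^p)/(1+\abs s+\abs t)^{?}$ — no. I will instead simply not split and use a direct concavity/Jensen argument: replace $\beta$ by its least concave majorant $\tilde\beta$ (still a modulus), and then by Jensen's inequality applied to the probability measure $\frac{(1+\abs U^p+\abs W^p)\,dx}{\int_\Lambda(1+\abs U^p+\abs W^p)\,dx}$,
\[
	\int_\Lambda \tilde\beta\Big(\tfrac{\abs{U-W}}{1+\abs U+\abs W}\Big)(1+\abs U^p+\abs W^p)\,dx \le \Big(\!\int_\Lambda (1+\abs U^p+\abs W^p)\Big)\tilde\beta\!\left(\frac{\int_\Lambda \frac{\abs{U-W}}{1+\abs U+\abs W}(1+\abs U^p+\abs W^p)}{\int_\Lambda (1+\abs U^p+\abs W^p)}\right),
\]
and then estimate the inner average: since $\frac{1+\abs U^p+\abs W^p}{1+\abs U+\abs W} \le 1+\abs U^{p-1}+\abs W^{p-1}$ (up to a dimensional constant) and $\abs{U-W}\cdot(1+\abs U^{p-1}+\abs W^{p-1})$ is integrated, Hölder with exponents $p$ and $p/(p-1)$ gives $\le \norm{U-W}_{L^p}\cdot C(1+\norm U_{L^p}^{p-1}+\norm W_{L^p}^{p-1})$, so the inner argument of $\tilde\beta$ is $\le \norm{U-W}_{L^p}\cdot\frac{C(1+\norm U_{L^p}^{p-1}+\norm W_{L^p}^{p-1})}{1+\norm{U}^p_{L^p}+\norm{W}_{L^p}^p}$ which is bounded (in terms of the $L^p$-bounds on $U,W$) and tends to $0$ as $\norm{U-W}_{L^p}\to 0$; absorbing the bounded prefactor into a new modulus $\gamma$ and the prefactor $\int_\Lambda(1+\abs U^p+\abs W^p) = \abs\Lambda + \norm U_{L^p}^p+\norm W_{L^p}^p$ into the $(1+\norm U_{L^p}^p+\norm W_{L^p}^p)$ factor (with $\abs\Lambda$ a harmless constant) yields \eqref{vLpL1uc}. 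The reverse direction \eqref{vLpL1uc} $\Rightarrow$ \eqref{vQuc} is a standard localization: fix $s,t\in\RR^{M\times N}$, apply \eqref{vLpL1uc} to the constant functions $U\equiv s$, $W\equiv t$ on $\Lambda$, and divide by $\abs\Lambda$; the $L^p$ and $L^1$ norms of constants over $\Lambda$ reduce to $\abs\Lambda^{1/p}\abs{s}$ etc., and \eqref{vQuc} drops out after renaming the modulus (absorbing the constant $\abs\Lambda$).

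The main obstacle, as the discussion above shows, is the direction \eqref{vQuc} $\Rightarrow$ \eqref{vLpL1uc}: the weight $1+\abs{U}^p+\abs{W}^p$ is only $L^1$, not uniformly integrable, so one cannot naively pass a uniformly-small pointwise bound through the integral. The Jensen/least-concave-majorant trick circumvents this cleanly by converting the weight into a probability measure and pushing the modulus outside; the remaining work is then the elementary inequality $\big|\abs s^p - \abs t^p\big|\lesssim_p \abs{s-t}(1+\abs s+\abs t)^{p-1}$ (from the mean value theorem applied to $r\mapsto r^p$) and bookkeeping with moduli of continuity, for which one uses that all the $v_0$ involved are bounded so every modulus can be taken bounded and hence replaced by a continuous increasing (indeed concave) one without loss.
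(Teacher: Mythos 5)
Your treatment of the direction \eqref{vLpL1uc} $\Rightarrow$ \eqref{vQuc} has a genuine gap. Testing \eqref{vLpL1uc} with the constant functions $U\equiv s$, $W\equiv t$ on all of $\Lambda$ and dividing by $\abs{\Lambda}$ yields
$\abs{v(s)-v(t)}\leq \gamma\big(\abs{\Lambda}^{1/p}\abs{s-t}\big)\big(\abs{\Lambda}^{-1}+\abs{s}^p+\abs{t}^p\big)$,
i.e.\ a modulus in the \emph{unnormalized} difference $\abs{s-t}$, not in $\frac{\abs{s-t}}{1+\abs{s}+\abs{t}}$. This is strictly weaker than \eqref{vQuc}: for instance $v_0(s)=\sin\abs{s}$ satisfies $\abs{v(s)-v(t)}\leq \beta'(\abs{s-t})(1+\abs{s}^p+\abs{t}^p)$ with $\beta'(r)=\min\{Cr,\,2\nnorm{v_0}_\infty\}$, yet violates \eqref{vQuc} (take $\abs{s}=n\pi$, $\abs{t}=n\pi+\pi/2$ collinear, so the normalized difference tends to $0$ while $\abs{v_0(s)-v_0(t)}=1$). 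The point of \eqref{vQuc} is precisely the normalization by $1+\abs{s}+\abs{t}$, and constant test functions cannot see it. The repair is to test instead with $U:=s\,\chi_{\Lambda_{s,t}}$ and $W:=t\,\chi_{\Lambda_{s,t}}$, where $\Lambda_{s,t}\subset\Lambda$ has measure $\abs{\Lambda_{s,t}}=(1+\abs{s}+\abs{t})^{-p}$ (possible whenever $1+\abs{s}+\abs{t}\geq\abs{\Lambda}^{-1/p}$; the remaining bounded range of $(s,t)$ is handled by ordinary uniform continuity on compacta). Then $\norm{U-W}_{L^p}=\frac{\abs{s-t}}{1+\abs{s}+\abs{t}}$ and $\abs{\Lambda_{s,t}}(1+\abs{s}^p+\abs{t}^p)$ is of order one, so dividing by $\abs{\Lambda_{s,t}}$ gives exactly \eqref{vQuc}. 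This scaling of the support is the essential idea your proposal is missing.

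The other three implications are fine. Your proof of \eqref{v0ltuc} $\Leftrightarrow$ \eqref{vQuc} is the same decomposition used in the paper. For \eqref{vQuc} $\Rightarrow$ \eqref{vLpL1uc}, after discarding several dead ends you arrive at a Jensen argument with the least concave majorant $\tilde\beta$ of $\beta$ and the probability measure proportional to $(1+\abs{U}^p+\abs{W}^p)\,dx$; this is correct (one should note explicitly that the least concave majorant of a \emph{bounded} modulus is again a modulus, which follows by comparing with the affine majorants $r\mapsto\eps+\nnorm{\beta}_\infty r/\delta_\eps$), and it is a genuinely different and arguably slicker route than the paper's, which instead splits $\Lambda$ according to whether the \emph{normalized} pointwise difference $\frac{\abs{U-W}}{1+\abs{U}+\abs{W}}$ exceeds $\norm{U-W}_{L^p}^{1/2}$ and controls the bad set by the Chebyshev-type bound $\int_{\Lambda_1}(1+\abs{U}+\abs{W})^p<\norm{U-W}_{L^p}^{p/2}$. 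Your Jensen route avoids any splitting; the paper's route avoids concave majorants. Please clean up the exploratory false starts before the final write-up, and in \eqref{vQuc} $\Rightarrow$ \eqref{v0ltuc} make sure the division by $1+\abs{s}^p$ is accompanied by the same elementary bound on $\bigabs{\abs{s}^p-\abs{t}^p}$ normalized correctly.
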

\begin{remark}
For instance, both \eqref{v0ltuc} and \eqref{vLpL1uc} hold if either $v$ is $p$-Lipschitz or $\lim_{\abs{s}\to\infty} v_0(s)=0$. 
\end{remark}
\begin{proof}[Proof of Lemma~\ref{lem:uc}]~\\
{\bf \eqref{v0ltuc} implies \eqref{vQuc}:} Given \eqref{v0ltuc}, we have that
\begin{align*}
	&\abs{v(s)- v(t)} \\
	&\leq 
	\abs{v_0(s)- v_0(t)} (1+\abs{s}^p)
	+\abs{v_0(t)}\bigabs{\abs{t}^p-\abs{s}^p}\\
	&\leq \alpha\Big(\frac{\abs{s-t}}{1+\abs{s}+\abs{t}}\Big)(1+\abs{s}^p)
	+C\frac{\abs{s-t}}{1+\abs{s}+\abs{t}}	(1+\abs{s}^p+t^p)
\end{align*}
for some constant $C$, whence \eqref{vQuc} holds with $\beta(\delta):=\alpha(\delta)+C\delta$.\\
{\bf \eqref{vQuc} implies \eqref{v0ltuc}:} Given \eqref{vLpL1uc}, we have that 
\begin{align*}
	&\abs{v_0(s)- v_0(s Q)}(1+\abs{s}^p)\\
	&\leq \bigabs{v(s)- v(s Q)}+\abs{v_0(sQ)}\bigabs{\abs{sQ}^p-\abs{s}^p}\\
	&\leq \beta(\abs{I-Q}) (1+\abs{s}^p)+C \abs{I-Q} (1+\abs{s}^p)
\end{align*}
for some constant $C$, whence \eqref{v0ltuc} holds with $\alpha(t):=\beta(t)+Ct$.\\
{\bf \eqref{vLpL1uc} implies \eqref{vQuc}:} 
Since $v$ is uniformly continuous on bounded sets, it suffices to show \eqref{vQuc} for $1+\abs{s}+\abs{t}\geq \abs{\Lambda}^{-\frac{1}{p}}$.
Let $\Lambda_{s,t}\subset \Lambda$ be a subset of measure $\abs{\Lambda_{s,t}}=\frac{1}{(1+\abs{s}+\abs{t})^p}$.
By choosing $U(x):=s \chi_{\Lambda_{s,t}}(x)$ and $W(x):=s Q \chi_{\Lambda_{s,t}}(x)$, \eqref{vLpL1uc} yields that
\begin{align*}
	\abs{\Lambda_{s,t}} \abs{v(s)-v(t)}
	&\leq \gamma\big(\abs{\Lambda_{s,t}}^\frac{1}{p} \abs{s-t}\big) (1+\abs{\Lambda_{s,t}}\abs{s}^p+\abs{\Lambda_{s,t}}\abs{t}^p),
\end{align*}
and since $\abs{\Lambda_{s,t}}^{-1}=(1+\abs{s}+\abs{t})^p\leq 3^p (1+\abs{s}^p+\abs{t}^p)$, this implies \eqref{vQuc} with $\beta:=(3^p+1)\gamma$.\\
{\bf \eqref{vQuc} implies \eqref{vLpL1uc}:} 
Let $U,W\in L^p(\Lambda;\RR^{M\times N})$, let 
$$
	\Lambda_1:=\mysetr{x\in\Lambda}{\frac{\abs{U(x)-W(x)}}{1+\abs{U(x)}+\abs{W(x)}}>\norm{U-W}_{L^p(\Lambda;\RR^{M\times N})}^\frac{1}{2}}
$$
and let $\Lambda_2:=\Lambda\setminus \Lambda_1$. W.l.o.g., we may assume that $\beta$ is nondecreasing. By applying \eqref{vQuc} 
under the integral, we thus get that 
\begin{align*}
	&\int_{\Lambda_1} \abs{v(U(x))-v(W(x))}\,dx\\
	&\leq \int_{\Lambda_1} \beta\Big(\frac{\abs{U(x)-W(x)}}{1+\abs{U(x)}+\abs{W(x)}}\Big)(1+\abs{U(x)}^p+\abs{W(x)}^p)\,dx\\
	&\leq \beta(1)	\int_{\Lambda_1} (1+\abs{U(x)}+\abs{W(x)})^p\,dx\\
	&< \beta(1)	\norm{U-W}_{L^p(\Lambda;\RR^{M\times N})}^{\frac{p}{2}},
\end{align*}
since $(1+\abs{U(x)}+\abs{W(x)})^p<\norm{U-W}_{L^p(\Lambda;\RR^{M\times N})}^{-\frac{p}{2}}\abs{U(x)-W(x)}^p$ for every $x\in \Lambda_1$.
In addition, \eqref{vQuc} and the definition of $\Lambda_2$ immediately yield that
\begin{align*}
	&\int_{\Lambda_2} \abs{v(U(x))-v(W(x))}\,dx\\
	&\leq \beta\Big(\norm{U-W}_{L^p(\Lambda;\RR^{M\times N})}^{\frac{1}{2}}\Big)	\int_{\Lambda_2} (1+\abs{U(x)}^p+\abs{W(x)}^p)\,dx.
\end{align*}
Combining, we obtain \eqref{vLpL1uc} with $\gamma(\delta):=\beta(\delta^{\frac{1}{2}})+\beta(1)\delta^{\frac{p}{2}}$.
\end{proof}
We now recall some results of \cite{Kroe10d} on weak lower semicontinuity along purely concentrating sequences:
\begin{thm}\label{thm:lscconc}
Let $\O\subset\RR^N$ be open and bounded with boundary of class $C^1$, let $1<p<\infty$, let $\varphi\in C(\overline{\O})$ with $\varphi\geq 0$ on $\overline{\O}$, let $u\in W^{1,p}(\O;\RR^M)$ and let $v\in \ups$ satisfy \eqref{vLpL1uc}. If $v$ is $p$-qscb at every $x\in \overline{\O}$ with $\varphi(x)>0$, then
\begin{equation*}
	\liminf_{n\to\infty} \int_\O v(\nabla w_n(x)+\nabla u(x))\varphi(x)\,dx
	\geq \int_\O v(\nabla u(x))\varphi(x)\,dx
\end{equation*}
for every sequence $(w_n)\subset W^{1,p}(\O;\RR^M)$ which is bounded in $W^{1,p}$ and satisfies
$\abs{\{w_n\neq 0\}\cup \{\nabla w_n\neq 0\}}\to 0$.
\end{thm}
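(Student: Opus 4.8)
The plan is to reduce everything to a localized statement near the boundary and then invoke the definition of $\nu$-$p$-bqscb combined with a covering argument. First I would split $\overline{\O}$ into the interior part $\{\varphi>0\}\cap\O$ and the boundary part $\{\varphi>0\}\cap\partial\O$. On any compact subset of $\O$ where $\varphi>0$, the $p$-iqscb property together with a standard localization (cover by small balls, rescale, use $\varphi$ nearly constant on each ball) gives the lower bound up to an $\eps$-error with the $-\eps\int|\nabla w_n|^p$ term absorbed by the uniform $W^{1,p}$-bound of $(w_n)$; this is essentially the interior half and is comparatively routine. The genuine work is near $\partial\O$.

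Near a boundary point $x_0$ with $\varphi(x_0)>0$, I would use a $C^1$ boundary chart to flatten $\partial\O$ locally, so that in new coordinates $\O$ looks like a half-ball $D_{\nu}$ up to a small $C^1$-perturbation. Then I would cover the relevant part of $\partial\O$ by finitely many such charts, use a partition of unity, and on each piece estimate $\int v(\nabla w_n+\nabla u)\varphi\,dx$ from below. The key point: since $|\{w_n\neq 0\}\cup\{\nabla w_n\neq 0\}|\to 0$, the ``mass'' of $w_n$ concentrates, and after rescaling around $x_0$ at the appropriate rate one obtains test functions in $W^{1,p}_0(B_1;\RR^M)$ to which the $\nu(x_0)$-$p$-bqscb inequality applies, yielding $\int_{D_\nu}v(\nabla\varphi_n)\,dx\geq -\eps\int_{D_\nu}|\nabla\varphi_n|^p-C_\eps$. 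The term $v(\nabla w_n+\nabla u)$ versus $v(\nabla w_n)$ discrepancy on the set where $\nabla w_n\neq 0$ is controlled using \eqref{vLpL1uc}: because $|\{\nabla w_n\neq 0\}|\to 0$ and $\nabla u\in L^p$, the difference in $L^1$ goes to zero; similarly the contribution of $v(\nabla u)$ itself outside $\{\nabla w_n\neq 0\}$ is exactly what we want to keep, and on $\{\nabla w_n\neq 0\}$ it is asymptotically negligible.

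I would then assemble the pieces: the interior estimate, the boundary estimates over the finitely many charts, and the error terms from the partition of unity and from replacing $\O$ locally by a half-space (here \eqref{vLpL1uc} is again essential — the $C^1$-diffeomorphism perturbs gradients multiplicatively by something close to the identity, and the uniform continuity of the Nemytskii operator bounds the resulting error in terms of a modulus times $(1+\|\nabla w_n\|_{L^p}^p+\|\nabla u\|_{L^p}^p)$, which is bounded). Letting $\eps\to0$ after $n\to\infty$ kills both the $-\eps\int|\nabla w_n|^p$ terms (bounded sequence) and the chart/partition errors, leaving $\liminf_n\int_\O v(\nabla w_n+\nabla u)\varphi\,dx\geq \int_\O v(\nabla u)\varphi\,dx$.

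The main obstacle, as the authors themselves flag in the paragraph preceding the lemma, is precisely the treatment of non-affine boundary: transforming a curved boundary patch to a flat one via a local chart distorts the non-compact family of concentrating test functions in a way that cannot be controlled by continuity of $v$ on compact sets alone. The whole point of introducing \eqref{v0ltuc}/\eqref{vLpL1uc} is to make this error estimate work, so the delicate step is to carefully track, for each $n$, how the flattening map acts on $\nabla w_n$ on the shrinking support $\{\nabla w_n\neq0\}$, verify the rescaled functions genuinely land in $W^{1,p}_0(B_1;\RR^M)$ (boundary values must vanish after flattening), and confirm that the accumulated error is $o(1)+\eps\cdot O(1)$. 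Everything else — the interior part, the $v(\nabla w_n+\nabla u)$ vs. $v(\nabla w_n)$ swap, the partition of unity bookkeeping — is standard once \eqref{vLpL1uc} is in hand. Note this lemma is stated as Theorem~\ref{thm:lscconc} and quoted from \cite{Kroe10d}; in the present paper it is essentially cited, with the role of \eqref{vLpL1uc} being the new ingredient replacing the $p$-Lipschitz hypothesis used there.
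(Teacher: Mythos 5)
Your proposal is correct and takes essentially the same route as the paper: for $u=0$ the paper simply cites Theorem 3.5 and Proposition 3.7 of \cite{Kroe10d} (noting that the $p$-Lipschitz hypothesis there is used only to obtain uniform continuity of the Nemytskii operator, i.e.\ \eqref{vLpL1uc}), so the localization/flattening/rescaling machinery you sketch is exactly the content of the cited results rather than something reproved here. For general $u$, the paper's Step 2 performs precisely your swap, via the exact identity obtained by replacing $\nabla u$ with $z_n:=\chi_{\{\nabla w_n\neq 0\}}\nabla u\to 0$ in $L^p$ and invoking \eqref{vLpL1uc}.
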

\begin{proof}
{\bf \underline{Step 1}: $\mbf{u=0}$.}\\
If $u=0$ and $v$ satisfies a $p$-Lipschitz condition, the assertion immediately follows from Theorem 3.5 and  Proposition 3.7 in \cite{Kroe10d}.
A closer look at the proofs of these results reveals that the $p$-Lipschitz condition is only used to show that $v:L^p\to L^1$ is uniformly continuous on bounded sets
(cf.~Proposition 2.4 in \cite{Kroe10d}), which we assumed in the form of \eqref{vLpL1uc}. (In fact, in \cite{Kroe10d}, the uniform continuity is exclusively used for arguments in the spirit of step 2 below.)

\noindent{\bf \underline{Step 2}: The general case.}\\
Clearly, $z_n:=\chi_{\{\nabla w_n\neq 0\}}\nabla u\to 0$ in $L^p(\O;\RR^{M\times N})$, and 
\begin{align*}
	&\int_\O v(\nabla w_n(x)+\nabla u(x))\varphi(x)\,dx
	-\int_\O v(\nabla u(x))\varphi(x)\,dx\\
	&=\int_\O v(\nabla w_n(x)+z_n(x))\varphi(x)\,dx
	-\int_\O v(z_n(x))\varphi(x)\,dx
\end{align*}
for every $n$. Hence, the general case reduces to the case for $u=0$ as a consequence of \eqref{vLpL1uc}.
\end{proof}
\begin{prop} \label{prop:lscconcnec}
Let $\O\subset\RR^N$ be open and bounded with boundary of class $C^1$, let $1<p<\infty$, let $v\in \ups$ satisfy \eqref{vLpL1uc}, let
$\bar{x}\in \overline\O$ and define $E:=B_1(0)$ if $\bar{x}\in\O$ and $E:=D_\nu$ if $\bar{x}\in\partial\O$, where
$\nu=\nu(\bar{x})$ is the outer normal to $\partial \O$ at $\bar{x}$ and $D_\nu:=\{y\in B_1(0)\mid y\cdot \nu<0\}$. 
If 
\begin{equation*}
	\liminf_{n\to\infty} \int_E v(\nabla w_n(y))\,dy
	\geq \int_E v(0)\,dy,
\end{equation*}
for every bounded sequence $(w_n)\subset W^{1,p}(B_1;\RR^M)$ such that
$w_n\to 0$ in $L^p$ and $\{w_n\neq 0\}\cup \{\nabla w_n\neq 0\}\subset B_{\frac{1}{n}}(0)$ for every $n$, then
$v$ is $p$-qscb at $\bar{x}$.
\end{prop}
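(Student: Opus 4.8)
The plan is to prove the contrapositive. Assume $v$ is \emph{not} $p$-qscb at $\bar x$. By Definition~\ref{def:qscb} — unwinding $p$-iqscb when $\bar x\in\O$, respectively $\nu$-$p$-bqscb with $\nu=\nu(\bar x)$ when $\bar x\in\partial\O$ — this means that, with $E$ as in the statement, there exist $\eps_0>0$ and, for every $k\in\NN$, a map $\varphi_k\in W_0^{1,p}(B_1;\RR^M)$ such that
\begin{equation*}
	\int_E v(\nabla\varphi_k)\,dy < -\eps_0\int_E\abs{\nabla\varphi_k}^p\,dy-k .
\end{equation*}
Out of these I want to build a bounded, purely concentrating sequence $(w_n)\subset W^{1,p}(B_1;\RR^M)$ — i.e.\ $w_n\to 0$ in $L^p$ and $\{w_n\neq 0\}\cup\{\nabla w_n\neq 0\}\subset B_{1/n}(0)$ — along which $\int_E v(\nabla w_n)\,dy$ stays below $\int_E v(0)\,dy$, contradicting the hypothesis. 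Since $\abs{v(s)}\le C(1+\abs{s}^p)$ with $C:=\sup_{\RR^{M\times N}}\abs{v_0}$, the inequality above first forces $t_k:=\int_E\abs{\nabla\varphi_k}^p\,dy\to\infty$ (otherwise its left-hand side stays bounded below while the right-hand side does not).

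\textbf{The rescaled sequence.}
For $\bar x\in\O$ we have $E=B_1$; set $\widehat\varphi_k:=\varphi_k$ (the boundary case is treated below). The plan is to take $w_n(y):=r_n\,\widehat\varphi_{k_n}(y/r_n)$ along a rapidly increasing sequence $(k_n)$, with $r_n>0$ fixed by $r_n^N\int_{B_1}\abs{\nabla\widehat\varphi_{k_n}}^p\,dy=1$; then $r_n\to0$, and choosing $k_n$ large enough also gives $r_n<1/n$. Consequently $\nabla w_n(y)=\nabla\widehat\varphi_{k_n}(y/r_n)$ is supported in $B_{r_n}(0)$, $\|\nabla w_n\|_{L^p(B_1)}=1$, and Poincar\'e's inequality on $B_1$ yields $\|w_n\|_{L^p(B_1)}^p\le c\,r_n^p\to0$ for a constant $c$ independent of $n$; so $(w_n)$ is admissible. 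The substitution $y=r_n z$ in $\int_E v(\nabla w_n(y))\,dy$, together with the fact that $\nabla\widehat\varphi_{k_n}$ vanishes off $B_1$ (so the integrand equals $v(0)$ on the remaining part $\{z:r_nz\in E\}\setminus E$), yields the exact identity
\begin{equation*}
	\int_E v(\nabla w_n)\,dy-\int_E v(0)\,dy = r_n^N\Big(\int_E v(\nabla\widehat\varphi_{k_n})\,dy-\int_E v(0)\,dy\Big).
\end{equation*}
Inserting the defining inequality for $\varphi_{k_n}$ and using $r_n^N t_{k_n}=1$, the right-hand side is $<-\eps_0-r_n^N k_n-r_n^N\!\int_E v(0)\,dy$, which has $\limsup\le-\eps_0$ as $n\to\infty$ (since $r_n^N k_n\ge0$ and $r_n^N\to0$). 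Hence $\liminf_n\int_E v(\nabla w_n)\,dy\le-\eps_0+\int_E v(0)\,dy<\int_E v(0)\,dy$, a contradiction.

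\textbf{The boundary case and the main obstacle.}
The one delicate point — and the only place where the boundary case genuinely differs — is that $\|\nabla w_n\|_{L^p(B_1)}$ sees $\nabla\widehat\varphi_{k_n}$ on \emph{all} of $B_1$, whereas the failure of $\nu$-$p$-bqscb only controls $\int_{D_\nu}\abs{\nabla\varphi_k}^p$. When $\bar x\in\partial\O$ I would handle this by taking $\widehat\varphi_k:=\widetilde\varphi_k$, the even reflection of $\varphi_k|_{D_\nu}$ across the flat face $\{y\cdot\nu=0\}$ of $D_\nu$ (so $\widetilde\varphi_k=\varphi_k$ on $D_\nu$ and $\widetilde\varphi_k(y)=\varphi_k\big(y-2(y\cdot\nu)\nu\big)$ on the opposite half of $B_1$). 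As $\varphi_k$ vanishes on $\partial B_1$, the glued function $\widetilde\varphi_k$ again lies in $W_0^{1,p}(B_1;\RR^M)$, with $\int_{D_\nu}v(\nabla\widetilde\varphi_k)\,dy=\int_{D_\nu}v(\nabla\varphi_k)\,dy$, $\int_{D_\nu}\abs{\nabla\widetilde\varphi_k}^p\,dy=t_k$, and $\int_{B_1}\abs{\nabla\widetilde\varphi_k}^p\,dy=2t_k$. Repeating the previous step with $E=D_\nu$ and the normalization $r_n^N\cdot2t_{k_n}=1$ then gives $\limsup_n\big(\int_{D_\nu}v(\nabla w_n)\,dy-\int_{D_\nu}v(0)\,dy\big)\le-\eps_0/2<0$, and the same contradiction. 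I expect this reflection step — identifying the right way to regain control on the gradient over the full ball — to be the crux; the remaining estimates are routine. Note also that the whole construction takes place on the fixed model domain $B_1(0)$, so no flattening of $\partial\O$ is involved and, as far as I can see, the uniform continuity \eqref{vLpL1uc} is not needed here; the $C^1$-regularity of $\partial\O$ enters only to make $\nu(\bar x)$ meaningful.
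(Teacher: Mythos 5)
Your argument is correct. Note, however, that the paper does not prove this proposition directly at all: its ``proof'' is a two-line citation of Proposition~3.8 in \cite{Kroe10d} (applied with $\O:=E$, $x_0:=0$), together with the remark that the $p$-Lipschitz hypothesis used there can be replaced by \eqref{vLpL1uc}. What you have written is essentially a self-contained reconstruction of the dilation argument underlying that reference: contraposition, the observation that $t_k=\int_E\abs{\nabla\varphi_k}^p\to\infty$ (forced by the $p$-growth $\abs{v}\le C(1+\abs{\cdot}^p)$, which is automatic for $v\in\ups$), the rescaling $w_n(y)=r_n\widehat\varphi_{k_n}(y/r_n)$ with $r_n^N\norm{\nabla\widehat\varphi_{k_n}}_{L^p(B_1)}^p=1$, and the exact change-of-variables identity that transports the defect $-\eps_0$ to the concentrating sequence. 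Your even reflection across the flat face of $D_\nu$ is exactly the right device for the boundary case: it converts control of $\int_{D_\nu}\abs{\nabla\varphi_k}^p$ into control of $\int_{B_1}\abs{\nabla\widetilde\varphi_k}^p=2t_k$ without changing $\int_{D_\nu}v(\nabla\cdot)$, at the harmless cost of replacing $\eps_0$ by $\eps_0/2$; all the pieces (membership of $\widetilde\varphi_k$ in $W_0^{1,p}(B_1)$, invariance of $\abs{\nabla\cdot}$ under the orthogonal reflection, Poincar\'e for $w_n\to0$ in $L^p$) check out. Your closing observation is also accurate and slightly sharpens the paper's presentation: since every identity in your proof is an exact change of variables on the fixed model domain, no Nemytskii-operator continuity is used, so \eqref{vLpL1uc} is genuinely superfluous for this implication --- it is needed elsewhere in the paper (e.g.\ in Theorem~\ref{thm:lscconc} and in Proposition~\ref{prop:pointconcOtoD}, where flattening of $\partial\O$ does introduce errors), but not here.
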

\begin{proof}
If $v$ satisfies a $p$-Lipschitz condition, the assertion follows from Proposition 3.8 in \cite{Kroe10d} applied with $\O:=E=E(\bar{x})$ and $x_0:=0$. 
Moreover, as remarked before, the $p$-Lipschitz condition can be replaced by \eqref{vLpL1uc}.
\end{proof}
A closer look at the dependence of the definition of $p$-qscb at a point $x\in\partial\O$ on the outer normal $\nu(x)$ to $\partial\O$ at this point
reveals the following:
\begin{lem}\label{lem:pqscbrotated}
Let $1<p<\infty$, let $f:\RR^{M\times N}\to \RR$ be continuous, and let $\nu_{1},\nu_2\in S^{N-1}$.  	
If $R_{21}\in \RR^{N\times N}$ is an orthogonal matrix such that $\nu_2=R_{21}\nu_1$, then
$$
	\text{$s\mapsto f(s)$ is $\nu_1$-$p$-bqscb \quad if and only if \quad $s\mapsto f(s R_{21})$ is $\nu_2$-$p$-bqscb}. 
$$	
\end{lem}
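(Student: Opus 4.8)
The plan is to obtain Lemma~\ref{lem:pqscbrotated} from a single orthogonal change of variables on the unit ball, without touching the (rather delicate) definition of $p$-bqscb itself. Write $R:=R_{21}$, so that $R\in\RR^{N\times N}$ is orthogonal and $\nu_2=R\nu_1$. The key geometric observation is that $x\mapsto Rx$ is an isometry of $\RR^N$ fixing the origin, hence it maps $B_1=B_1(0)$ bijectively onto itself; and since $Rx\cdot\nu_2=Rx\cdot R\nu_1=x\cdot\nu_1$ for all $x$, it carries the half-ball $D_{\nu_1}=\{x\in B_1\mid x\cdot\nu_1<0\}$ bijectively onto $D_{\nu_2}$. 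In particular, because $R(B_1)=B_1$, the substitution $\varphi\mapsto\varphi\circ R$ is a linear bijection of $W^{1,p}_0(B_1;\RR^M)$ onto itself.

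First I would prove the implication ``$s\mapsto f(s)$ is $\nu_1$-$p$-bqscb $\Rightarrow$ $s\mapsto f(sR)$ is $\nu_2$-$p$-bqscb''. Fix $\psi\in W^{1,p}_0(B_1;\RR^M)$ and set $\varphi(x):=\psi(Rx)$, which again lies in $W^{1,p}_0(B_1;\RR^M)$. By the chain rule $\nabla\varphi(x)=(\nabla\psi)(Rx)\,R$ (the matrix $R$ enters on the \emph{right}, which is exactly the form appearing in $f(sR)$), so $f(\nabla\varphi(x))=f\big((\nabla\psi)(Rx)\,R\big)$; and since the Frobenius norm used throughout is invariant under right multiplication by an orthogonal matrix, $\abs{\nabla\varphi(x)}=\abs{(\nabla\psi)(Rx)}$. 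Applying the change of variables $y=Rx$ (with $\abs{\det R}=1$, hence $dy=dx$, and $D_{\nu_1}\leftrightarrow D_{\nu_2}$) turns the $\nu_1$-$p$-bqscb inequality for $f$, tested with $\varphi$, into
\begin{equation*}
	\int_{D_{\nu_2}} f\big(\nabla\psi(y)\,R\big)\,dy\;\geq\;-\eps\int_{D_{\nu_2}}\abs{\nabla\psi(y)}^p\,dy-C_\eps ,
\end{equation*}
with the \emph{same} $\eps$ and $C_\eps$. As $\psi$ ranges over $W^{1,p}_0(B_1;\RR^M)$ this is precisely the statement that $s\mapsto f(sR)$ is $\nu_2$-$p$-bqscb.

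The converse implication then comes for free: apply what was just proved with $\nu_1$ and $\nu_2$ interchanged, with $R$ replaced by the orthogonal matrix $R^{T}$ (which satisfies $\nu_1=R^{T}\nu_2$), and with $f$ replaced by $g(s):=f(sR)$. This yields that $s\mapsto g(sR^{T})=f(sR^{T}R)=f(s)$ is $\nu_1$-$p$-bqscb whenever $g=f(\,\cdot\,R)$ is $\nu_2$-$p$-bqscb.

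I do not expect any genuine obstacle here; the proof is a bookkeeping exercise. The only points requiring care are the matrix conventions — in particular getting the chain-rule identity $\nabla(\psi\circ R)=(\nabla\psi)(R\,\cdot\,)\,R$ with $R$ on the right, so that it matches $f(sR)$ — and recording the elementary invariances of an orthogonal transformation: it preserves $B_1$, Lebesgue measure, the Frobenius norm, and the sign of $x\cdot\nu$ once $\nu$ is rotated accordingly.
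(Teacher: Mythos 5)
Your proof is correct and follows essentially the same route as the paper's: the orthogonal change of variables $y=Rx$, which maps $D_{\nu_1}$ onto $D_{\nu_2}$ and $W_0^{1,p}(B_1;\RR^M)$ onto itself, together with the chain rule placing $R$ on the right of the gradient and the invariances $\abs{\det R}=1$ and $\abs{sR}=\abs{s}$. The only cosmetic difference is that the paper substitutes $\varphi_2(y):=\varphi_1(R^{-1}y)$ and phrases the two inequalities as an equivalence, whereas you run the forward implication and then swap roles for the converse.
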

\begin{proof}
Let $\varphi_1\in W_0^{1,p}(B_1;\RR^M)$. Using the notation of Definition~\ref{def:qscb}, 
we have that
$$
	\int_{D_{\nu_1}} f(\nabla \varphi_1)\,dx\geq -\eps \int_{D_{\nu_1}} \abs{\nabla \varphi_1}^p\,dx-C_\eps
$$
if and only if for $\varphi_2\in W_0^{1,p}(B_1;\RR^M)$, $\varphi_2(y):=\varphi_1\big(R_{21}^{-1} y\big)$,
$$
	\int_{D_{\nu_2}} f\big((\nabla \varphi_2) R_{21}\big)\,dy\geq -\eps \int_{D_{\nu_2}} \abs{\nabla \varphi_2}^p\,dy-C_\eps,
$$
by the change of variables given by $y=R_{21}x$. Here, note that $D_{\nu_2}=R_{21}D_{\nu_1}$, $\abs{\det R_{21}}=1$ and 
$\abs{(\nabla \varphi_2) R_{21}}=\abs{\nabla \varphi_2}$.
\end{proof}
\begin{prop}\label{prop:qscbRseparable}
Let $1\leq p<\infty$, assume that \eqref{H3} holds, and let $\nu\in S^{N-1}$. 
Then
$$
	G_\nu:=\mysetr{v_0\in \cR}{\text{$v$ is $\nu$-$p$-bqscb, where $v(s):=v_0(s)(1+\abs{s}^p)$}}.
$$
is the closure of its interior in $\cR$ (with respect to the trace topology of $L^\infty(\RR^{M\times N})$).
In particular, if $\cR_0$ is a dense subset of $\cR$, then $\cR_0\cap G_\nu$ is dense in $G_\nu$.
\end{prop}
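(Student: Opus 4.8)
The plan is to show that $G_\nu$ contains a dense open set in its interior, so that $G_\nu$ equals the closure of that open set (which is automatically contained in the closure of the interior, the reverse inclusion being trivial). The key structural observation is that the $\nu$-$p$-bqscb property is an inequality that is \emph{robust under small $L^\infty$-perturbations of $v_0$}, but with a twist: the perturbation $v_0\mapsto v_0+\delta$ (adding a small constant) changes $v$ by $\delta(1+\abs{s}^p)$, which is exactly the ``seminorm'' controlling the right-hand side of the $\nu$-$p$-bqscb inequality. So if $v_0\in G_\nu$, I will show $v_0+\delta\cdot\mathbb{1}$ lies in the \emph{interior} of $G_\nu$ for every $\delta>0$. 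Indeed, if $\norm{w_0-(v_0+\delta)}_{L^\infty}\le \delta/2$, write $w_0=v_0+r_0$ with $\delta/2\le r_0\le 3\delta/2$ pointwise; then for $\varphi\in W_0^{1,p}(B_1;\RR^M)$,
\[
\int_{D_\nu} w(\nabla\varphi)\,dx
=\int_{D_\nu} v(\nabla\varphi)\,dx+\int_{D_\nu} r_0(\nabla\varphi)(1+\abs{\nabla\varphi}^p)\,dx
\ge \int_{D_\nu} v(\nabla\varphi)\,dx+\tfrac{\delta}{2}\int_{D_\nu}\abs{\nabla\varphi}^p\,dx,
\]
and since $v$ is $\nu$-$p$-bqscb, for any $\eps>0$ we get $\int_{D_\nu} v(\nabla\varphi)\ge -(\eps+\delta/2)\int_{D_\nu}\abs{\nabla\varphi}^p - C_{\eps}$. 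Rearranging and writing $\eps'=\eps-\delta/2$ (for $\eps>\delta/2$) shows $w$ is $\nu$-$p$-bqscb; note the constant term $\int_{D_\nu} r_0(\nabla\varphi)\,dx$ is bounded by $\tfrac{3\delta}{2}\abs{D_\nu}$ and gets absorbed into $C_\eps$. Hence the $L^\infty$-ball of radius $\delta/2$ around $v_0+\delta$ lies in $G_\nu$, i.e.\ $v_0+\delta\in\operatorname{int}G_\nu$.

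Granting this, the proof finishes quickly. Given any $v_0\in G_\nu$, the sequence $v_0+\tfrac1n\mathbb{1}$ lies in $\operatorname{int}G_\nu$ by the above and converges to $v_0$ in $L^\infty(\RR^{M\times N})$ as $n\to\infty$ (using $1\in\cR$ and that $\cR$ is a ring, so $v_0+\tfrac1n\in\cR$). Therefore $v_0\in\overline{\operatorname{int}G_\nu}$. The reverse inclusion $\overline{\operatorname{int}G_\nu}\subset\overline{G_\nu}=G_\nu$ holds because $G_\nu$ is closed in $\cR$: this follows from Lemma~\ref{lem:uc} (the property \eqref{vLpL1uc} holds uniformly along $L^\infty$-convergence of $v_0$'s with a common modulus coming from \eqref{H3}) — actually, more simply, $G_\nu$ is closed because if $v_0^{(k)}\to v_0$ in $L^\infty$ with each $v_0^{(k)}\in G_\nu$, then $\abs{v(s)-v^{(k)}(s)}\le\norm{v_0-v_0^{(k)}}_{L^\infty}(1+\abs{s}^p)$, so the same perturbation estimate as above (with $r_0=v_0-v_0^{(k)}$, $\abs{r_0}\le\norm{v_0-v_0^{(k)}}_\infty\to0$) propagates the $\nu$-$p$-bqscb inequality from $v^{(k)}$ to $v$ in the limit. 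This gives $G_\nu=\overline{\operatorname{int}G_\nu}$, proving the first claim.

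For the ``in particular'' statement: if $\cR_0$ is dense in $\cR$, then $\cR_0\cap\operatorname{int}G_\nu$ is dense in $\operatorname{int}G_\nu$ (a dense set meets every nonempty open set, and intersecting with an open set preserves relative density), hence dense in $\overline{\operatorname{int}G_\nu}=G_\nu$, and a fortiori $\cR_0\cap G_\nu$ is dense in $G_\nu$.

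The main obstacle, I expect, is not the perturbation estimate itself — which is elementary — but making sure the bookkeeping with the $\eps$-$C_\eps$ quantifiers is airtight: one must verify that after the perturbation the \emph{same} family of constants (reindexed) still works for \emph{all} $\eps>0$, including small $\eps\le\delta/2$, where the naive rearrangement would produce a negative coefficient. The fix is to observe that for $\eps\le\delta/2$ the inequality for $\eps$ follows trivially from the one for $\eps=\delta/2$ (since $-\eps\int\abs{\nabla\varphi}^p\ge -\tfrac\delta2\int\abs{\nabla\varphi}^p$), so only $\eps>\delta/2$ needs the genuine argument. One should also double-check that adding a constant keeps us inside $\cR$, which is exactly what hypotheses (ii) ($1\in\cR$) and the ring structure in \eqref{H3} guarantee.
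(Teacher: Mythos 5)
Your proof is correct and follows exactly the route of the paper's (one-line) proof: the key point in both is that $v_0+\delta$ is an interior point of $G_\nu$, since the perturbation $\delta\cdot(1+\abs{s}^p)$ dominates any $L^\infty$-perturbation of $v_0$ of size at most $\delta/2$ in the $\nu$-$p$-bqscb inequality. Your additional verification that $G_\nu$ is closed (needed for the inclusion $\overline{\operatorname{int}G_\nu}\subset G_\nu$) and your handling of the $\eps$--$C_\eps$ quantifiers are both sound and merely make explicit what the paper leaves to the reader.
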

\begin{proof}
For every $\delta>0$ and $v_0\in G$, $\delta+v_0(\cdot)$ is an interior point of $G$ (relative to $\cR$). 
\end{proof}

\section{Separating boundary and interior}

By means of a result of \cite{Kroe10d}, any bounded sequence in $W^{1,p}$ (up to a subsequence) can be split into a sum of two parts, 
the first ``purely concentrating'' at the boundary of the domain, while the second part does not charge the boundary in the sense made precise below.
This splitting has an analogon for DiPerna-Majda measures, decomposing $(\sigma,\hat{\nu})\in \DM$ 
into two parts $(\sigma_b,\hat{\nu}_b)$ and $(\sigma_i,\hat{\nu}_i)$ associated to the boundary and the interior of $\O$, respectively, as follows:
\begin{equation}\label{DMsplit}
\begin{alignedat}{2}
	&\sigma_b(dx)&&:=\chi_{\partial \O}(x)\sigma(dx)+dx,~~\\
	&\hat{\nu}_{b,x}(ds)&&:=\chi_{\partial\O}(x)\hat{\nu}_{x}(ds)+\chi_{\O}(x)\delta_{0}(ds),\\
	&\sigma_i(dx)&&:=\chi_{\O}(x)\sigma(dx),~~\\
	&\hat{\nu}_{i,x}(ds)&&:=\chi_{\O}(x)\hat{\nu}_{x}(ds),
\end{alignedat}
\end{equation}
where $\delta_0$ denotes the Dirac mass at $0\in \com$.
Assuming that $\abs{\partial\O}=0$, we have in particular that
\begin{equation}\label{DMsplit2}
\begin{aligned}
	&\int_{\overline{\O}} \varphi(x) \bscp{\hat{\nu}_{x}}{v_0}\,\sigma(dx)\\
	&\quad =\begin{aligned}[t]
		&\int_{\partial \O} \varphi(x) \bscp{\hat{\nu}_{b,x}}{v_0}\,\sigma_b(dx)
		+\int_{\O} \varphi(x) \bscp{\hat{\nu}_{i,x}}{v_0}\,\sigma_i(dx)
	\end{aligned}
\end{aligned}
\end{equation}
for every $\varphi\in C(\overline{\O})$ and every $v_0\in\cR$.

The decomposition \eqref{DMsplit} does not affect the properties we are interested in:
\begin{prop}\label{prop:GDMsplit}
Let $\O\subset \RR^N$ be open and bounded with boundary of class $C^1$, let $1<p<\infty$ and let $(\sigma,\hat{\nu})\in \DM$.
Then $(\sigma_b,\hat{\nu}_b)\in \DM$ and $(\sigma_i,\hat{\nu}_i)\in \DM$. Moreover, the following assertions hold:
\begin{enumerate}
\item[(a)] $(\sigma,\hat{\nu})\in \GDM$ if and only if\\ 
both $(\sigma_b,\hat{\nu}_b)\in \GDM$ and $(\sigma_i,\hat{\nu}_i)\in \GDM$.
\item[(b)] If $(\sigma,\hat{\nu})\in \GDM$, then there exists $u\in W^{1,p}(\O;\RR^M)$
and bounded sequences $(u_{b,n}),(u_{i,n})\subset W^{1,p}(\O;\RR^M)$
such that
$$
	\begin{aligned}
		&\nabla u(x)=d_{\sigma_i}(x)\int_\com \frac{s}{1+\abs{s}^p}\,\hat{\nu}_{i,x}(ds),\\
		&\text{$u_{b,n}\rightharpoonup 0$ and $u_{i,n}\rightharpoonup u$ weakly in $W^{1,p}(\O;\RR^M)$,}\\
		&\{u_{b,n}\neq 0\}\subset (\partial\O)_{\frac{1}{n}}~~\text{and}~~\{u_{i,n}\neq u\}\subset \O\setminus (\partial\O)_{\frac{1}{n}},\\
		&\text{$(\nabla u_{b,n})$ generates $(\sigma_b,\hat{\nu}_b)$, $(\nabla u_{i,n})$ generates $(\sigma_i,\hat{\nu}_i)$}\\
		&\text{and $(\nabla u_{b,n}+\nabla u_{i,n})$ generates $(\sigma,\hat{\nu})$.}
	\end{aligned}
$$
\item[(c)] $(\sigma,\hat{\nu})$ satisfies (i)-(iii) in Theorem~\ref{thm:GDMchar} if and only if\\
$(\sigma_i,\hat{\nu}_i)$ satisfies (i)-(iii).
\item[(d)] $(\sigma,\hat{\nu})$ satisfies (iv) in Theorem~\ref{thm:GDMchar} if and only if\\
$(\sigma_b,\hat{\nu}_b)$ satisfies (iv).
\end{enumerate}
\end{prop}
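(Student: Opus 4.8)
The plan is to prove Proposition~\ref{prop:GDMsplit} by establishing the membership claims and the four equivalences in turn, using the decomposition~\eqref{DMsplit} together with Proposition~\ref{prop:DMchar}, Proposition~\ref{prop:GDMsplit}'s own part (b) as the engine for (a), and the splitting lemma from \cite{Kroe10d} referred to in the section preamble.

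\textbf{Step 1: $(\sigma_b,\hat\nu_b)\in\DM$ and $(\sigma_i,\hat\nu_i)\in\DM$.} First I would verify the characterization conditions (i)--(iv) of Proposition~\ref{prop:DMchar} directly for each of the two new pairs, using that $(\sigma,\hat\nu)$ already satisfies them. For $(\sigma_i,\hat\nu_i)$ this is immediate: restricting everything to $\O$ (via multiplication by $\chi_\O$) preserves positivity, absolute continuity of $\bar\sigma_i$ with respect to Lebesgue measure on $\O$ (and $\bar\sigma_i$ puts no mass on $\partial\O$), the pointwise density identity in (iii), and the normalization in (iv) for $\sigma_i$-a.e.\ $x$, since $\sigma_i$ is carried by $\O$. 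For $(\sigma_b,\hat\nu_b)$: on $\O$ we have $\hat\nu_{b,x}=\delta_0$, so $\hat\nu_{b,x}(\RR^{M\times N})=1$, $\int(1+\abs{s}^p)^{-1}\hat\nu_{b,x}(ds)=1$, hence $\bar\sigma_b=dx$ on $\O$, which is absolutely continuous with density $1=d_{\sigma_b}$, matching (iii); on $\partial\O$ we have $\hat\nu_{b,x}=\hat\nu_x$ and, since $\abs{\partial\O}=0$, $\bar\sigma_b$ restricted to $\partial\O$ coincides with $\bar\sigma$ restricted to $\partial\O$, which by Remark~\ref{rem:DMprop} is zero, so (ii) holds; (iv) for $(\sigma_b,\hat\nu_b)$ on $\partial\O$ follows from (iv) for $(\sigma,\hat\nu)$. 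Positivity (i) is clear in both cases.

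\textbf{Step 2: part (b), and then (a) as a consequence.} This is the crux. Assume $(\sigma,\hat\nu)\in\GDM$, generated by $(\nabla u_n)$ with $u_n\rightharpoonup u$ in $W^{1,p}$. I would invoke the decomposition lemma of \cite{Kroe10d} (the ``splitting at the boundary'' result alluded to at the start of this section) to write, up to a subsequence, $\nabla u_n=\nabla u_{b,n}+\nabla u_{i,n}$ with $u_{b,n}\rightharpoonup 0$, $u_{i,n}\rightharpoonup u$, $\{u_{b,n}\neq 0\}\subset(\partial\O)_{1/n}$, $\{u_{i,n}\neq u\}\subset\O\setminus(\partial\O)_{1/n}$, and such that each of $(\nabla u_{b,n})$ and $(\nabla u_{i,n})$ generates some DiPerna-Majda measure (passing to a further subsequence). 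The point is then to identify these generated measures as exactly $(\sigma_b,\hat\nu_b)$ and $(\sigma_i,\hat\nu_i)$: for $\varphi\in C(\overline\O)$ and $v_0\in\cR$, $\int_\O\varphi\, v(\nabla u_{i,n})\,dx$ sees only the interior part, and since $\{u_{i,n}\neq u\}$ recedes from $\partial\O$, the limit is $\int_\O\varphi\,\bscp{\hat\nu_{i,x}}{v_0}\,\sigma_i(dx)$; for the boundary part, $\{u_{b,n}\neq0\}$ shrinks to $\partial\O$, so on the interior $\nabla u_{b,n}\to0$ in measure and contributes $\int_\O\varphi(x)v_0(0)\,dx=\int_\O\varphi\,\bscp{\delta_0}{v_0}\,dx$, while the boundary contribution matches $\chi_{\partial\O}\hat\nu_x\sigma$. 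Here one must be careful that the two parts do not interact on the thin overlap region; because $\{u_{i,n}\neq u\}$ and $\{u_{b,n}\neq0\}$ are disjoint for each $n$, on $\{u_{b,n}\neq0\}$ we have $\nabla u_n=\nabla u_{b,n}+\nabla u$, and the fixed gradient $\nabla u$ here lives on a set of vanishing measure, so by the uniform-continuity property~\eqref{vLpL1uc} (Lemma~\ref{lem:uc}) its contribution is negligible — this is precisely the kind of estimate used in Step~2 of the proof of Theorem~\ref{thm:lscconc}. Combining, $(\nabla u_{b,n})$ generates $(\sigma_b,\hat\nu_b)$, $(\nabla u_{i,n})$ generates $(\sigma_i,\hat\nu_i)$, and the original sum generates $(\sigma,\hat\nu)$, which proves (b) and also the forward direction of (a). For the converse direction of (a): if $(\sigma_b,\hat\nu_b)\in\GDM$ via $(\nabla u_{b,n})$ and $(\sigma_i,\hat\nu_i)\in\GDM$ via $(\nabla u_{i,n})$, I would, after passing to subsequences and using a diagonal/truncation argument, arrange the supports of $u_{b,n}$ to concentrate near $\partial\O$ and those of $u_{i,n}-u$ to stay away from $\partial\O$ (modifying by cutoffs if necessary, again controlling the error via~\eqref{vLpL1uc}), so that $u_n:=u_{b,n}+u_{i,n}$ has $\nabla u_n$ generating $(\sigma,\hat\nu)$ by~\eqref{DMsplit2}.

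\textbf{Step 3: parts (c) and (d).} These are bookkeeping on the conditions (i)--(iv) of Theorem~\ref{thm:GDMchar} under the decomposition~\eqref{DMsplit}. For (c): conditions (i), (ii), (iii) only involve $x\in\O$ (condition (i) and (ii) refer to a.e.\ $x\in\O$ and $d_\sigma$; (iii) refers to $\sigma$-a.e.\ $x\in\O$), and on $\O$ the measures $\sigma$ and $\sigma_i$ and the slices $\hat\nu_x$, $\hat\nu_{i,x}$ coincide, while the pieces of $\sigma_b,\hat\nu_{b,x}$ sitting over $\O$ (namely $dx$ and $\delta_0$) automatically satisfy (i) (with $u$ replaced appropriately), (ii) (since $Qv(0)\le v(0)$), and (iii) (vacuously, as $\hat\nu_{b,x}=\delta_0$ gives zero singular mass). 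Hence (i)--(iii) for $(\sigma,\hat\nu)$ is equivalent to (i)--(iii) for $(\sigma_i,\hat\nu_i)$. For (d): condition (iv) refers only to $\sigma$-a.e.\ $x\in\partial\O$, and on $\partial\O$ we have $\sigma_b=\sigma$ and $\hat\nu_{b,x}=\hat\nu_x$ (using $\abs{\partial\O}=0$ so the added $dx$ contributes nothing on $\partial\O$), while $\sigma_i$ puts no mass on $\partial\O$, so (iv) for $(\sigma,\hat\nu)$ is equivalent to (iv) for $(\sigma_b,\hat\nu_b)$, and is vacuous for $(\sigma_i,\hat\nu_i)$.

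The main obstacle is Step~2: making rigorous that the splitting lemma of \cite{Kroe10d} can be applied so that the two pieces generate \emph{exactly} the prescribed measures $(\sigma_b,\hat\nu_b)$ and $(\sigma_i,\hat\nu_i)$ — in particular handling the cross term on the shrinking overlap between $\{u_{b,n}\neq0\}$ and the bulk where $\nabla u_{i,n}\approx\nabla u$, which is exactly where the uniform-continuity hypothesis~\eqref{vLpL1uc} (equivalently~\eqref{v0ltuc}) is needed, and controlling that passing to subsequences for the two parts separately is consistent with the original sequence. The reverse implication in (a) requires a similarly careful cutoff construction.
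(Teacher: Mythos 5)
Your proposal follows essentially the same route as the paper: the ``only if'' direction of (a) via the local decomposition lemma of \cite{Kroe10d} (Lemma~\ref{lem:decloc} with $K_1=\partial\O$, $K_2=\overline{\O}$) combined with the splitting of nonlinear expressions (Proposition~\ref{prop:fsplit}) and the uniform continuity \eqref{vLpL1uc} to absorb the $\nabla u$ cross term, the ``if'' direction via cutoffs making the two supports disjoint, and (c), (d) and the $\DM$ membership as direct bookkeeping from \eqref{DMsplit} and Proposition~\ref{prop:DMchar}. The only minor imprecision is that you extract (b) from the ``only if'' decomposition, whereas Lemma~\ref{lem:decloc} only yields $\{u_{i,n}\neq u\}\subset\O\setminus\partial\O$ rather than $\O\setminus(\partial\O)_{\frac{1}{n}}$; the paper obtains the sharper support condition from the cutoff construction in the ``if'' direction, which you have available anyway.
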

The proof is given at the end of this section. Proposition~\ref{prop:GDMsplit} allows us to focus on the discussion of the boundary in the proof of our main result,
because the results of \cite{KaKru08a} can be applied to $(\sigma_i,\hat{\nu}_i)$ in a straightforward way. 

For the proof of (a), we first recall some results of \cite{Kroe10d} involving the following notion:
\begin{defn}\label{def:charge} 
Given a sequence $(u_n)\subset W^{1,p}(\O;\RR^M)$ and a closed set $K\subset \overline{\O}$, we say that \emph{$u_n$ does not charge $K$} (in $W^{1,p}$), if 
$$
	\sup_{n\in\NN}\int_{(K)_\delta\cap \O} \big( 
	 \abs{u_n}^p+\abs{\nabla u_n}^p \big)dx
	\underset{\delta\to 0^+}{\To} 0.
$$
Here, $(K)_{\delta}:=\bigcup_{x\in K}B_\delta(x)$ denotes the open $\delta$-neighborhood of $K$ in $\RR^N$.
\end{defn}
\begin{lem}[local decomposition in $W^{1,p}$, cf.~Lemma 2.6 in \cite{Kroe10d}]
\label{lem:decloc}
Let $\O\subset \RR^N$ be open and bounded, let $1\leq p<\infty$
and let $K_j\subset \overline{\O}$, $j=1,\ldots,J$, be a finite family of compact sets such that $\overline{\O}\subset \bigcup_j K_j$.
Then for every bounded sequence $(u_n)\subset W^{1,p}(\O;\RR^M)$ 
with
$u_n\to 0$ in $L^p$,
there exists a subsequence $u_{k(n)}$ which can be decomposed as
$$
	u_{k(n)}=u_{1,n}+\ldots+u_{J,n},
$$
where for each  $j\in\{1,\ldots, J\}$, $(u_{j,n})_n$ is a bounded sequence in $W^{1,p}(\O;\RR^M)$ converging to zero in $L^p$ such that
the following three conditions hold:
\begin{enumerate}
	\item[(i)]
	$\{u_{j,n}\neq 0\}\subset \{u_n\neq 0\}$,
	$\{\nabla u_{j,n}\neq 0\}\subset \{\nabla u_n\neq 0\}$ 
	(possibly ignoring a set of measure zero)
	and $\overline{\{u_{j,n}\neq 0\}}
	\subset (K_j)_{\frac{1}{n}}
	\setminus \textstyle{\bigcup_{i<j}}K_i$
	for every $j,n$,
	\item[(ii)]
	$u_{j,n}$ does not charge 
	$\textstyle{\bigcup_{i<j}}K_i$ in $W^{1,p}$ for each $j$.
	\item[(iii)] On the ``transition layer''
	$$
		T_n:=\mysetr{x\in\O}{u_{j,n}(x)\neq 0~\text{for at least two different $j$}},
	$$
	we have that
	$$
		\int_{T_n} \big(\abs{u_{j,n}}^p+\abs{\nabla u_{j,n}}^p\big)\,dx
		\underset{n\to\infty}{\To}0,~~\text{for $j=1,\ldots,J$.}
	$$
\end{enumerate}
Here, $(K_j)_{\frac{1}{n}}$ denotes the open $\frac{1}{n}$-neighborhood of $K_j$ in $\RR^N$ as before.
\end{lem}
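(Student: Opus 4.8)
The plan is to follow the construction behind Lemma~2.6 of \cite{Kroe10d}: peel the sets $K_1,\dots,K_J$ off the sequence one at a time, at the $j$-th stage extracting the part of the current remainder that lives near $K_j$ by multiplying it by a cut-off function whose transition layer is placed, by a De~Giorgi-type slicing argument, on a thin annular shell carrying little of the $W^{1,p}$-energy of the sequence, and compensating the necessarily large gradients of these cut-offs by passing to a sufficiently fast subsequence, using the strong convergence $u_n\to0$ in $L^p$.

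I would argue by induction on the number of sets. In the inductive step one has a bounded sequence $(v_n)\subset W^{1,p}(\O;\R^M)$ with $v_n\to0$ in $L^p$, and wants to split off the part near $K_j$. Fix an integer $L\in\NN$ (to be sent to $\infty$) and, for each $n$, subdivide the annulus $(K_j)_{1/n}\setminus(K_j)_{1/(2n)}$ into $L$ concentric sub-shells $S^1_n,\dots,S^L_n$ of width $\sim(nL)^{-1}$; since $\sum_{\ell}\int_{S^\ell_n}(\abs{v_n}^p+\abs{\nabla v_n}^p)\le C$ with $C$ independent of $n$, at least one sub-shell $S_n:=S^{\ell_n}_n$ carries $\int_{S_n}(\abs{v_n}^p+\abs{\nabla v_n}^p)\le C/L$. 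Choose $\eta_n\in C_0^\infty(\R^N;[0,1])$ with $\eta_n\equiv1$ on $(K_j)_{1/(2n)}$, $\eta_n\equiv0$ off $(K_j)_{1/n}$, and transition layer $\{0<\eta_n<1\}\subset S_n$, so that $\norm{\nabla\eta_n}_{L^\infty}\le CnL$. With $v=v_{k(n)}$ for a subsequence fixed below, $u_{j,n}:=\eta_n v$ and $v'_n:=(1-\eta_n)v$, one has $v_{k(n)}=u_{j,n}+v'_n$, $\{u_{j,n}\ne0\}\subset(K_j)_{1/n}$, and $v'_n$ vanishes on $(K_j)_{1/(2n)}$; hence $\overline{\{u_{l,n}\ne0\}}$ stays away from $K_j$ for all later pieces $l>j$, and $(v'_n)$ is again bounded in $W^{1,p}$ with $v'_n\to0$ in $L^p$ and vanishing on $(K_j)_{1/(2n)}$, which is exactly what is needed to iterate the argument over $K_{j+1},\dots,K_J$ (using $\overline\O\subset\bigcup_iK_i$).

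Once all stages are done, I would verify the three conditions from the resulting identity $u_{k(n)}=\sum_j u_{j,n}$ together with $\nabla u_{j,n}=\eta_{j,n}\nabla u_{k(n)}+u_{k(n)}\nabla\eta_{j,n}$. Property (i) (support inclusions and nesting) is built into the construction. For (iii): the transition layer $T_n$ is contained in the union over all stages of the sliced shells $S_n$, and on each of them $\int_{S_n}(\abs{u_{j,n}}^p+\abs{\nabla u_{j,n}}^p)\le C\int_{S_n}(\abs{u_{k(n)}}^p+\abs{\nabla u_{k(n)}}^p)+C\norm{\nabla\eta_{j,n}}_{L^\infty}^p\norm{u_{k(n)}}_{L^p}^p\le C'/L_n+C(nL_n)^p\norm{u_{k(n)}}_{L^p}^p$; I take $L=L_n\to\infty$ to kill the first term and, since $\norm{u_m}_{L^p}\to0$, choose $k(n)$ at each level so large that $\norm{u_{k(n)}}_{L^p}\le(nL_n)^{-2}$ to kill the second (finitely many nested subsequence extractions, yielding one final subsequence). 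An estimate of the same shape over $(\bigcup_{i<j}K_i)_\delta$ — where, for $n$ large, $u_{j,n}$ is zero except possibly on the stage-$<j$ sliced shells, whose total mass is $\le C'/L_n$ — gives (ii) after letting $\delta\to0^+$.

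The main obstacle is exactly this last point: the cut-offs are forced to act at the scale $1/n$ demanded by the support condition $\{u_{j,n}\ne0\}\subset(K_j)_{1/n}$, so $\abs{\nabla\eta_{j,n}}$ is of order $nL_n$, and the strong $L^p$-null convergence of $(u_n)$ carries no rate to offset this; this is why one must pass to a subsequence and choose $k(n)$ diagonally, and why the ``does not charge'' property (ii) requires the transitions to sit on slicing-selected shells of vanishing mass rather than on arbitrary ones. Making the requirements $C'/L_n\to0$ and $(nL_n)^p\norm{u_{k(n)}}_{L^p}^p\to0$ compatible through all $J$ stages while simultaneously preserving the support structure and properties (ii)--(iii) is the technical heart of the argument; what remains is elementary, using $\abs{\nabla(\eta u)}^p\le2^{p-1}(\abs{\eta}^p\abs{\nabla u}^p+\abs{u}^p\abs{\nabla\eta}^p)$, the equiintegrability of $\{\abs{u_{k(n)}}^p\}$ implied by $u_n\to0$ in $L^p$, and careful bookkeeping of the sliced shells.
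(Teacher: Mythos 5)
The paper offers no proof of its own here --- it simply defers to Lemma~2.6 of \cite{Kroe10d} (adding only that (iii) falls out of that proof) --- so what you are really doing is reconstructing the cited argument. Your overall strategy (iterated cut-offs near the $K_j$, De~Giorgi slicing to place each transition layer on a low-energy shell, and a diagonal subsequence using $u_n\to 0$ in $L^p$ to absorb the $\abs{u}\,\abs{\nabla\eta}$ error) is indeed the standard route and does deliver the support inclusions in (i) and the transition-layer estimate (iii).

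There is, however, a genuine gap in your argument for (ii). You assert that on $(\bigcup_{i<j}K_i)_\delta$ the piece $u_{j,n}$ vanishes except on the sliced shells of the earlier stages. That is false: the stage-$i$ cut-off only kills $u_{j,n}$ on $(K_i)_{1/(2n)}$, while on the much larger region $(K_i)_\delta\setminus (K_i)_{1/n}$ the piece $u_{j,n}$ coincides (up to the other cut-offs) with $u_{k(n)}$, and the $W^{1,p}$-energy of $u_{k(n)}$ there is neither touched by the slicing nor small uniformly in $n$. Concretely, take $J=2$, $K_1=\partial\O$, and let $u_m$ carry unit gradient energy concentrated at distance $a_m$ from $\partial\O$ with $a_m\to 0$ arbitrarily slowly while $\norm{u_m}_{L^p}\to 0$ arbitrarily fast; your selection rule for $k(n)$ uses only the $L^p$-norm, so it may return indices with $a_{k(n)}\gg 1/n$, in which case the whole concentration lands in $u_{2,n}$ and $\sup_n\int_{(\partial\O)_\delta}\big(\abs{u_{2,n}}^p+\abs{\nabla u_{2,n}}^p\big)\,dx$ stays bounded away from $0$ as $\delta\to 0^+$. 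The missing idea is that the subsequence and the cut-off radii must also be tuned to where the energy of $u_m$ localizes, not merely to $\norm{u_m}_{L^p}$: for instance, apply Helly's selection theorem to the bounded monotone functions $\delta\mapsto \int_{(K_i)_\delta\cap\O}\big(\abs{u_m}^p+\abs{\nabla u_m}^p\big)\,dx$, call the pointwise limit $g_i$ and set $c_i:=g_i(0^+)$, and then choose $k(n)$ together with radii $\rho_n\to 0$ so slowly that $\int_{(K_i)_{\rho_n}\cap\O}\big(\abs{u_{k(n)}}^p+\abs{\nabla u_{k(n)}}^p\big)\,dx\to c_i$. Then the stage-$i$ cut-off asymptotically captures all the mass that concentrates on $K_i$, the energy of the later pieces in $(K_i)_\delta$ is bounded by $g_i(\delta)-c_i$ plus the slicing and cut-off errors for all large $n$ (the finitely many remaining indices are handled individually, since each $u_{j,n}$ vanishes on a neighborhood of $K_i$), and (ii) follows on letting $\delta\to 0^+$. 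With this modification the rest of your construction goes through.
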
%
For our purposes here, the case $J=2$, $K_1=\partial \O$ and $K_2=\O$ in Lemma~\ref{lem:decloc} suffices.
\begin{proof}[Proof of Lemma~\ref{lem:decloc}]
See Lemma 2.6 in \cite{Kroe10d}. Condition (iii) is not stated in \cite{Kroe10d}, but it is an immediate consequence of the proof provided there.
\end{proof}
Because of (iii), the component sequences above essentially do not interact, and we 
are able to split nonlinear expressions as well, cf.~Proposition 2.7 in \cite{Kroe10d}:
\begin{prop}\label{prop:fsplit}
Let $\O\subset \RR^N$ be open and bounded and let $1\leq p<\infty$. In addition, assume that
$f:\RR^{M\times N}\to \RR$ is continuous and satisfies a $p$-growth condition (i.e.,
$s\mapsto (1+\abs{s}^p)^{-1}f(s)$ is bounded).
Then for every $U\in L^p(\O;\RR^{M\times N})$,
$$
	f(\nabla u_n+U)-f(U)-\sum_{j=1}^J \big(f(\nabla u_{j,n}+U)-f(U)\big)
	\underset{n\to\infty}{\To} 0\quad\text{in $L^1(\O)$},
$$
for any decomposition $u_n=\sum_j u_{j,n}$ into a finite sum of bounded sequences in $W^{1,p}(\O;\RR^M)$ such that Lemma~\ref{lem:decloc} (iii) holds.
\end{prop}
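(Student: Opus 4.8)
The plan is to prove Proposition~\ref{prop:fsplit} by reducing it to a purely measure-theoretic statement about where the various quantities are nonzero, and then exploiting the $p$-growth bound on $f$ together with Lemma~\ref{lem:decloc}~(iii). First I would fix $U\in L^p(\O;\RR^{M\times N})$ and write $g(s):=f(s+U(x))-f(U(x))$ pointwise, so that the claimed convergence becomes
\[
	g_n:=g\big(\nabla u_n\big)-\sum_{j=1}^J g\big(\nabla u_{j,n}\big)\To 0\quad\text{in }L^1(\O),
\]
where I abuse notation and suppress the explicit $x$-dependence. The key observation is that $\nabla u_n=\sum_j\nabla u_{j,n}$, so on the set $\O\setminus T_n$ (the complement of the transition layer) at most one of the gradients $\nabla u_{j,n}$ is nonzero at any given point; call it $\nabla u_{j(x),n}$ if it exists. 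On that point we have $\nabla u_n(x)=\nabla u_{j(x),n}(x)$ and every other $\nabla u_{i,n}(x)=0$, so that $g(\nabla u_n(x))=g(\nabla u_{j(x),n}(x))$ and $g(\nabla u_{i,n}(x))=g(0)=0$ for $i\neq j(x)$. Hence $g_n(x)=0$ for a.e.\ $x\in\O\setminus T_n$. (If \emph{no} gradient is nonzero at $x$, then $\nabla u_n(x)=0$ too and again $g_n(x)=0$.)

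It remains to control $\int_{T_n}\abs{g_n}\,dx$. Here I use the $p$-growth bound: there is a constant $C>0$ with $\abs{f(s)}\leq C(1+\abs{s}^p)$, hence
\[
	\abs{g(\nabla u_{j,n})}=\abs{f(\nabla u_{j,n}+U)-f(U)}\leq C\big(2+\abs{\nabla u_{j,n}+U}^p+\abs{U}^p\big)\leq C'\big(1+\abs{\nabla u_{j,n}}^p+\abs{U}^p\big),
\]
and similarly for $g(\nabla u_n)$ with $\abs{\nabla u_n}^p\leq J^{p-1}\sum_j\abs{\nabla u_{j,n}}^p$ on $T_n$. Therefore on $T_n$,
\[
	\abs{g_n}\leq C''\Big(1+\abs{U}^p+\sum_{j=1}^J\abs{\nabla u_{j,n}}^p\Big).
\]
Integrating over $T_n$, the term $\int_{T_n}(\sum_j\abs{\nabla u_{j,n}}^p)\,dx$ tends to $0$ by Lemma~\ref{lem:decloc}~(iii), while $\int_{T_n}(1+\abs{U}^p)\,dx\to 0$ because $\abs{T_n}\to 0$ (which itself follows from (iii), since $\int_{T_n}1\,dx\leq\int_{T_n}(\abs{u_{1,n}}^p+\abs{\nabla u_{1,n}}^p)\,dx+\abs{T_n\cap\{u_{1,n}=\nabla u_{1,n}=0\}}$ — actually cleaner: $T_n\subset\{u_{j,n}\neq0\text{ for two }j\}$ has measure going to $0$ because on $T_n$ some $u_{j,n}$ with $j\geq 2$ is nonzero and the corresponding integrals in (iii) vanish; alternatively one cites that $u_n\to 0$ in $L^p$ and the supports shrink) and $1+\abs{U}^p\in L^1(\O)$ is a fixed integrable majorant, so dominated convergence applies.

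The main obstacle, and the only genuinely delicate point, is the bookkeeping needed to establish that $\abs{T_n}\to 0$ cleanly from the hypotheses, and relatedly to make the ``at most one nonzero gradient off $T_n$'' argument fully rigorous when $U\neq 0$ (one must be careful that $g(0)=0$ uses precisely $f(U)-f(U)=0$, which is fine, but the supports of $\nabla u_{j,n}$ versus $u_{j,n}$ must not be confused — the transition layer $T_n$ is defined via the \emph{functions}, not the gradients, so strictly one should work with $\tilde T_n:=\{x:\nabla u_{j,n}(x)\neq 0\text{ for at least two }j\}$ and note $\tilde T_n\subset T_n$ up to null sets since $\nabla u_{j,n}=0$ a.e.\ on $\{u_{j,n}=0\}$ for $W^{1,p}$ functions). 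Once these support relations are pinned down, the estimate above closes the proof, and indeed this is exactly the argument indicated by the reference to Proposition~2.7 in \cite{Kroe10d}; I would simply cite that proof, remarking that condition (iii) of Lemma~\ref{lem:decloc} is what makes it run.
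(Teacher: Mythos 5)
Your first step---the pointwise identity $g_n=0$ a.e.\ on $\O\setminus T_n$, including the careful remark that one should really work with the set where at least two \emph{gradients} are nonzero, which is contained in $T_n$ up to a null set---is exactly the paper's first step and is fine. The gap is in the second step. Your estimate on $T_n$ reduces the claim to $\int_{T_n}(1+\abs{U}^p)\,dx\to 0$, hence to $\abs{T_n}\to 0$, and this does \emph{not} follow from Lemma~\ref{lem:decloc}~(iii), which is the only hypothesis on the decomposition in Proposition~\ref{prop:fsplit}. Condition (iii) controls $\int_{T_n}(\abs{u_{j,n}}^p+\abs{\nabla u_{j,n}}^p)\,dx$, but a function can be nonzero on a set of fixed positive measure while such integrals vanish: take $J=2$ and $u_{1,n}=u_{2,n}=n^{-1}w$ with $w\in W^{1,p}(\O;\RR^M)$ nonzero a.e.; then (iii) holds, yet $T_n=\O$ for every $n$, so $\int_{T_n}\abs{U}^p\,dx$ is a fixed positive constant. (The conclusion of the proposition is still true in this example, so it is your method, not the statement, that breaks.) Each of your suggested repairs either hits the same obstruction---smallness of $\int\abs{u_{j,n}}^p$ never bounds $\abs{\{u_{j,n}\neq 0\}}$---or invokes the shrinking-support condition (i) of Lemma~\ref{lem:decloc}, which is not among the hypotheses of the proposition.

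The fix is to drop the crude additive bound $\abs{f(s+t)-f(t)}\leq C(1+\abs{s}^p+\abs{t}^p)$ and instead use the continuity of the Nemytskii operator $V\mapsto f(V)$ from $L^p(\O;\RR^{M\times N})$ to $L^1(\O)$, which holds for any continuous $f$ with $p$-growth (Krasnoselskii's theorem; in essence Vitali's convergence theorem). Condition (iii) gives $\chi_{T_n}\nabla u_{j,n}\to 0$ in $L^p$ for each $j$, hence also $\chi_{T_n}\nabla u_n\to 0$, and therefore $f(\chi_{T_n}\nabla u_{j,n}+U)\to f(U)$ and $f(\chi_{T_n}\nabla u_n+U)\to f(U)$ in $L^1(\O)$. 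Since $g_n$ vanishes off $T_n$ and on $T_n$ is exactly the corresponding combination of these differences, the claim follows with no estimate on $\abs{T_n}$ at all. This is precisely the argument in the paper's proof.
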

\begin{proof}
Observe that since $u_n=\sum_j u_{j,n}$, 
the definition of the set $T_n$ in condition (iii) of 
Lemma~\ref{lem:decloc} yields that
$$
  f(\nabla u_n+U)-f(U)-\sum_{j=1}^J \big(f(\nabla u_{j,n}+U)-f(U)\big)=0
  \quad\text{a.e.~on $\O\setminus T_n$}.
$$
Hence, it suffices to show that
$f(\chi_{T_n}\nabla u_n+U)\to f(U)$ and $f(\chi_{T_n}\nabla u_{j,n}+U)\to f(U)$ in $L^1(\O)$, for $j=1,\ldots, J$. This is a consequence of (iii), since
our assumptions on $f$ imply that $V\mapsto f(V)$, $L^p(\O;\R^{M\times N})\to L^1(\O)$, is continuous. 
\end{proof}
\begin{proof}[Proof of Proposition~\ref{prop:GDMsplit}]
Using Proposition~\ref{prop:DMchar}, it is not difficult to check that $(\sigma_b,\hat{\nu}_b),(\sigma_i,\hat{\nu}_i)\in \DM$,
and both (c) and (d) readily follow from \eqref{DMsplit}. It remains to show (a) and (b).

{\bf (a) ``only if'':} Suppose that $(\sigma,\hat{\nu})$ is generated by $(\nabla u_n)$, for a bounded sequence $(u_n)\subset W^{1,p}(\O;\RR^M)$
such that $u_n\rightharpoonup u$ weakly in $W^{1,p}$ for some $u\in W^{1,p}(\O;\RR^M)$. By the compact embedding of $W^{1,p}$ into $L^p$,
we also have that $u_n\to u$ strongly in $L^p$. We decompose (up to a subsequence, not relabeled)
$$
	u_n-u=u_{1,n}+u_{2,n}
$$
according to Lemma~\ref{lem:decloc}, applied with $J=2$, $K_1:=\partial\O$ and $K_2:=\overline{\O}$.
Let $(\sigma_1,\hat{\nu}_1)$ and $(\sigma_2,\hat{\nu}_2)$ denote the DiPerna-Majda measures generated by $(\nabla u_{1,n})$ and 
$(\nabla u_{2,n}+\nabla u)$, respectively (up to a subsequence).
By construction, $\{u_{1,n}\neq 0\}\cup \{\nabla u_{1,n}\neq 0\} \subset (\partial\O)_{\frac{1}{n}}$ for every $n$, and $(\nabla u_{2,n})$ does not charge $\partial\O$ in $L^p$.
This implies that 
\begin{equation}\label{pGDMs-1}
	\text{$\sigma_1(dx)=dx$ on $\O$,$\quad$$\hat{\nu}_{1,x}=\delta_0$ for a.e.~$x\in \O$$\quad$and$\quad$$\sigma_2(\partial\O)=0$.}
\end{equation}
Moreover, $\chi_{\{\nabla u_{1,n}\neq 0\}}\nabla u\to 0$ in $L^p$, whence
\begin{equation}\label{pGDMs-2}
	\big[v(\nabla u_{1,n}+\nabla u)-v(\nabla u)\big]
	-\big[v(\nabla u_{1,n})-v(0)\big]
	\underset{n\to\infty}{\To}0~~\text{in $L^1(\O)$}
\end{equation}
for every $v\in \ups$,
due to the uniform continuity of the Nemytskii operator associated to $v$ on bounded subsets of $L^p$, cf~\eqref{vLpL1uc}.
Proposition~\ref{prop:fsplit} applied with $f=v$ and $U=\nabla u$ additionally yields that
\begin{equation}\label{pGDMs-3}
	v(\nabla u_n)-\big[v(\nabla u_{1,n}+\nabla u)-v(\nabla u)]-v(\nabla u_{2,n}+\nabla u)
	\underset{n\to\infty}{\To}0~~\text{in $L^1(\O)$}.
\end{equation}
Combining \eqref{pGDMs-1}--\eqref{pGDMs-3}, we infer that
\begin{align*}
	&\int_{\overline{\O}} \varphi(x) \bscp{\hat{\nu}_{x}}{v_0}\,\sigma(dx)
	\\
	&=	\int_{\overline{\O}} \varphi(x) \bscp{\hat{\nu}_{1,x}}{v_0} \,\sigma_1(dx)-\int_\O v(0)\,dx
	+\int_{\overline{\O}} \varphi(x) \bscp{\hat{\nu}_{2,x}}{v_0} \,\sigma_2(dx)\\
	&=	\int_{\partial \O} \varphi(x) \bscp{\hat{\nu}_{1,x}}{v_0} \,\sigma_1(dx)
	+\int_{\O} \varphi(x) \bscp{\hat{\nu}_{2,x}}{v_0} \,\sigma_2(dx)
\end{align*}
for every $\varphi\in C(\O)$ and every $v_0\in \cR$, where $v(s):=v_0(s)(1+\abs{s}^p)$.
By comparison with \eqref{DMsplit2}, we get that $(\sigma_b,\hat{\nu}_b)=(\sigma_1,\hat{\nu}_1)\in \GDM$
and  $(\sigma_i,\hat{\nu}_i)=(\sigma_2,\hat{\nu}_2)\in \GDM$ as claimed.

{\bf (a) ``if'':} Suppose that $(\sigma_b,\hat{\nu}_b)$ is generated by $(\nabla w_{b,n})$ and $(\sigma_i,\hat{\nu}_i)$ is generated by $(\nabla w_{i,n})$, 
for some bounded sequences $(w_{b,n})_n,(w_{i,n})_n\subset W^{1,p}(\O;\RR^M)$. 
In particular,
$$
	\int_\O \varphi(x)\abs{\nabla w_{b,n}(x)}^p\,dx\to \int_{\overline{\O}}\int_\com \frac{\abs{s}^p}{1+\abs{s}^p}\,\hat{\nu}_{b,x}(ds)\sigma_b(dx)=0
$$
for every $\varphi\in C_0(\O)$, whence $\nabla w_{b,n}\to 0$ in $L^p_\loc(\O;\RR^{M\times N})$. Passing to a subsequence and adding a suitable constant to $w_{b,n}$ (if necessary; this does not change the gradient which is the only thing that matters for us), we also may assume that $w_{b,n}\to 0$ in $L^p$ by compact embedding.
In addition, 
$$
	\int_\O \varphi(x)\nabla w_{i,n}(x)\,dx\to \int_{\overline{\O}}\int_\com \frac{s}{1+\abs{s}^p}\,\hat{\nu}_{i,x}(ds)\sigma_i(dx),
$$
whence (up to a subsequence)
$$
	w_{i,n}\rightharpoonup u~~\text{weakly in $W^{1,p}(\O;\RR^M)$, where}~~\nabla u(x)=d_{\sigma_i}(x)\int_\com \frac{s}{1+\abs{s}^p}\,\hat{\nu}_{i,x}(ds).
$$
A natural choice for a generating sequence of $(\sigma,\hat{\nu})$ is $(\nabla u_n)$ with $u_n=w_{b,n}+w_{i,n}$; 
however, this only works well if the interaction of the two component sequences,
which in principle could occur on the set $\{w_{b,n}\neq 0\}\cap \{w_{i,n}\neq u\}$, is negligible. 
We thus first modify $w_{b,n}$ and $w_{i,n}$, in such a way that this set becomes empty. 

For this purpose, choose two sequences $(\varphi_n),(\eta_n)\subset C^1_c(\O;[0,1])$ such that 
$\varphi_n=1$ on $\O\setminus (\partial \O)_{\frac{1}{n}}$, $\eta_n=0$ on $(\partial \O)_{\frac{1}{n}}$
and $\eta_n=1$ on $\O\setminus (\partial \O)_{\frac{2}{n}}$ for every $n$.
For every fixed $n$, we have that $\varphi_n w_{b,k}\to 0$ in $W^{1,p}$ and 
$(\nabla (1-\eta_n))\otimes (w_{i,k}-u)\to 0$ in $L^p$ as $k\to\infty$.
Due to the latter, we also obtain that
\begin{align*}
	&\lim_{k\to\infty}\int_\O \abs{\nabla \big((1-\eta_n) (w_{i,k}-u)\big)}^p\,dx\\
	&=\lim_{k\to\infty}\int_\O \abs{1-\eta_n}^p\abs{\nabla w_{i,k}-\nabla u}^p\,dx\\
	&\leq \lim_{k\to\infty} 2^p\int_\O \abs{1-\eta_n}^p(1+\abs{\nabla w_{i,k}}^p)\,dx
	+2^p \int_\O \abs{1-\eta_n}^p \abs{\nabla u}^p\,dx\\
	&=2^p \int_{\overline{\O}} \abs{1-\eta_n(x)}^p 
	\hat{\nu}_{i,x}(\com)\,\sigma_i(dx)+2^p\int_\O \abs{1-\eta_n}^p \abs{\nabla u}^p\,dx,
\end{align*}
whence
\begin{align*}
	\lim_{n\to\infty}\lim_{k\to\infty}\int_\O \abs{\nabla \big((1-\eta_n) (w_{i,k}-u)\big)}^p\,dx
	\leq 2^p \sigma_i(\partial\O)=0
\end{align*}
by dominated convergence. As a consequence, there exists a subsequence
$k(n)$ of $n$ such that as $n\to\infty$,
\begin{equation}\label{pGDMs-11}
	\text{$\varphi_n w_{b,k(n)}\to 0$ in $W^{1,p}$}\quad\text{and}\quad
	\text{$(1-\eta_n) (w_{i,k(n)}-u)\to 0$ in $W^{1,p}$}
\end{equation}
We define
$$
	u_{b,n}:=(1-\varphi_n)\cdot w_{b,k(n)}
	\quad\text{and}\quad
	u_{i,n}:=\eta_n \cdot(w_{i,k(n)}-u)+u.
$$
Note that by \eqref{pGDMs-11} and \eqref{vLpL1uc},
$(\nabla u_{b,n})$ and $(\nabla u_{i,n})$ still generate $(\sigma_b,\hat{\nu}_b)$ and $(\sigma_i,\hat{\nu}_i)$, respectively.
Moreover, for
$$
	u_n:=u_{b,n}+u_{i,n},
$$
the decomposition $u_n-u=u_{b,n}+(u_{i,n}-u)$ is admissible in Proposition~\ref{prop:fsplit} 
(note that $\{u_{b,n}\neq 0\}\cap \{u_{i,n}-u\neq 0\}=\emptyset$ by construction),
and arguing as in the proof of (i) ``only if'', we obtain that 
\begin{align*}
	&\lim_{n\to\infty} \int_\O \varphi(x) v(\nabla u_n(x))\,dx
	=	\int_{\partial \O} \varphi(x) \bscp{\hat{\nu}_{b,x}}{v_0} \,\sigma_b(dx)
	+\int_{\O} \varphi(x) \bscp{\hat{\nu}_{i,x}}{v_0}\,\sigma_i(dx),
\end{align*}
for every $\varphi\in C(\overline{\O})$ and every $v_0\in \cR$, with $v(s):=v_0(s)(1+\abs{s}^p)$.
In view of \eqref{DMsplit2}, this means that $(\nabla u_n)$ generates $(\sigma,\hat\nu)$.

{\bf (b):} The function $u$ and the sequences $(u_{b,n})$ and $(u_{i,n})$ 
obtained in the previous step have all the asserted properties.
\end{proof}

\section{Necessary conditions\label{sec:nec}}

We now prove that each $(\sigma,\nu)\in \GDM$ satisfies the conditions (i)--(iv) of Theorem~\ref{thm:GDMchar}.
The conditions in the interior of $\O$ follow from the associated result of \cite{KaKru08a}:
\begin{thm}[cf.~Theorem 2.8 in \cite{KaKru08a}]
Assume that \eqref{H1}--\eqref{H3} hold, and let $(\sigma,\hat{\nu})\in \GDM$ be generated by $(\nabla u_n)$ such that 
$u_n \rightharpoonup u$ weakly in $W^{1,p}(\O;\RR^M)$. Then $(\sigma,\hat{\nu})$ satisfies (i)--(iii) in Theorem~\ref{thm:GDMchar}.
\end{thm}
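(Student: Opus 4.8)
The plan is to reduce the assertion to the case of a generating sequence with a \emph{fixed} Dirichlet datum, which is precisely the situation covered by \cite[Theorem~2.8]{KaKru08a}.

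First I would invoke Proposition~\ref{prop:GDMsplit}: since $(\sigma,\hat\nu)\in\GDM$, part~(a) gives $(\sigma_i,\hat\nu_i)\in\GDM$, and part~(b) produces a bounded sequence $(u_{i,n})\subset W^{1,p}(\O;\RR^M)$ generating $(\sigma_i,\hat\nu_i)$ with $u_{i,n}\wto u$ and $\{u_{i,n}\neq u\}\subset\O\setminus(\partial\O)_{1/n}$. Since $\O\setminus(\partial\O)_{1/n}$ is, for each $n$, a compact subset of $\O$, this forces $u_{i,n}\in u+W_0^{1,p}(\O;\RR^M)$; that is, $(u_{i,n})$ is a sequence with the common boundary datum $u$. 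I would then apply \cite[Theorem~2.8]{KaKru08a} to $(\sigma_i,\hat\nu_i)$ and this sequence to obtain conditions (i)--(iii) of Theorem~\ref{thm:GDMchar} for $(\sigma_i,\hat\nu_i)$; here one only has to check that \eqref{H3} places $\cR$ into the ring framework used there, while the regularity assumption \eqref{v0ltuc} plays no role for the interior conditions. Finally I would use Proposition~\ref{prop:GDMsplit}(c) to transfer (i)--(iii) back to $(\sigma,\hat\nu)$, which completes the proof.

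If one prefers not to lean on \cite{KaKru08a}, conditions (i)--(iii) can instead be read off directly from $(\nabla u_n)$, since their necessity makes no use of the boundary condition. Condition (i) is immediate: testing \eqref{DMgenerate} componentwise against $v_0(s):=s_{ij}(1+\abs{s}^p)^{-1}$ (so that $v(s)=s_{ij}$), which lies in the uniform closure of $C_0(\RR^{M\times N})$ and hence in $\cR$, identifies the weak limit $\nabla u$ of $\nabla u_n$ with the claimed barycenter; here the continuous extension of such $v_0$ to $\com\setminus\RR^{M\times N}$ vanishes, and $\hat\nu_x(\RR^{M\times N})=0$ for $\sigma_s$-a.e.~$x$ by Remark~\ref{rem:DMprop}, so the singular part of $\sigma$ on $\O$ does not contribute. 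For condition (ii), one localizes $(\sigma,\hat\nu)$ at a.e.~Lebesgue point $x_0\in\O$ to a homogeneous gradient DiPerna--Majda measure with underlying affine map $y\mapsto\nabla u(x_0)\,y$ and generating sequence $y\mapsto\nabla u(x_0)\,y+\varphi_k(y)$ with $\varphi_k\in W_0^{1,p}(B_1;\RR^M)$; quasiconvexity of $Qv$ together with $Qv\le v$ then gives $\abs{B_1}\,Qv(\nabla u(x_0))\le\int_{B_1}Qv(\nabla u(x_0)+\nabla\varphi_k)\,dy\le\int_{B_1}v(\nabla u(x_0)+\nabla\varphi_k)\,dy$, and passing to the limit yields (ii). For condition (iii), one localizes instead at points of the singular part of $\sigma$ to a purely concentrating sequence and uses that $Qv>-\infty$ forces $v$ to be $p$-iqscb (cf.~Remark~\ref{rem:p-qscb_and_Q}); the $p$-iqscb estimate, rescaled along the concentrating sequence and then sent to $\eps\to0$, produces the nonnegativity.

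The genuinely technical point is the localization behind condition (ii): extracting, at almost every interior point, a homogeneous gradient DiPerna--Majda measure whose generating sequence has affine boundary values. This is a standard blow-up and diagonalization argument in the spirit of the localization principle for gradient Young measures; everything else is either bookkeeping (on the reduction route) or a routine use of the cited estimates.
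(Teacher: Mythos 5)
Your proposal is correct, but it does considerably more than the paper, which offers no proof of this statement at all: the theorem is presented as a direct instance of Theorem~2.8 in \cite{KaKru08a}, and the remark immediately following it stresses that the cited result already holds under weaker hypotheses ($\abs{\partial\O}=0$ suffices, and \eqref{v0ltuc} is not needed), i.e.\ the necessity of (i)--(iii) in \cite{KaKru08a} does not require any boundary condition on the generating sequence. Your first route --- splitting off the boundary part via Proposition~\ref{prop:GDMsplit}(b), observing that $u_{i,n}\in u+W_0^{1,p}(\O;\RR^M)$ because $\{u_{i,n}\neq u\}$ stays away from $\partial\O$, applying the fixed-Dirichlet-datum version of \cite[Theorem~2.8]{KaKru08a} to $(\sigma_i,\hat\nu_i)$, and transferring (i)--(iii) back with Proposition~\ref{prop:GDMsplit}(c) --- is sound and non-circular (the splitting proposition is proved independently of the necessity theorem), and it is a sensible safeguard if one only trusts the Dirichlet-constrained formulation of the cited theorem; it buys robustness at the cost of invoking machinery the paper reserves for the sufficiency direction. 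Your second route amounts to re-deriving \cite[Theorem~2.8]{KaKru08a} from scratch: the verification of (i) by testing with $v_0(s)=s_{ij}(1+\abs{s}^p)^{-1}$ is complete and correct, but the blow-up/localization arguments behind (ii) and (iii) that you flag as ``the genuinely technical point'' are precisely the content of the cited theorem, so this route should be understood as a proof sketch of the reference rather than of the present statement. Neither route contains a gap; both simply relocate, rather than avoid, the appeal to \cite{KaKru08a}.
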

\begin{remark}
In fact, Theorem 2.8 in \cite{KaKru08a} uses weaker assumptions: it suffices to have that $\abs{\partial\O}=0$ instead of a boundary of class $C^1$, 
and \eqref{v0ltuc} is not needed there.
\end{remark}
It remains to show (iv): 
\begin{prop}\label{prop:nec1}
Assume that \eqref{H1}--\eqref{H3} hold, and let $(\sigma,\hat{\nu})\in \GDM$.
Then $(\sigma,\hat{\nu})$ satisfies (iv) in Theorem~\ref{thm:GDMchar}, i.e.,
\begin{equation}\label{pnec1iv}
	\scp{\hat{\nu}_{x}}{\frac{v(\cdot)}{1+\abs{\cdot}^p}}_\infty= \int_{\com\setminus \RR^{M\times N}} \frac{v(s)}{1+\abs{s}^p} \,\hat{\nu}_{x}(ds)~\geq~0
\end{equation}
for $\sigma$-a.e.~$x_0\in \partial \O$ and every $v\in \ups$ which is $p$-qscb at $x_0$.
\end{prop}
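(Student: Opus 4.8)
The plan is to reduce the statement to a purely concentrating sequence at a single boundary point and then invoke Proposition~\ref{prop:lscconcnec}, after flattening the boundary with a rotation. First I would use Proposition~\ref{prop:GDMsplit}: by (d) it suffices to verify (iv) for $(\sigma_b,\hat\nu_b)$, and by (b) we may fix a generating sequence $(\nabla u_{b,n})$ with $u_{b,n}\rightharpoonup 0$ in $W^{1,p}$ and $\{u_{b,n}\neq 0\}\subset(\partial\O)_{1/n}$. Fix $v\in\ups$, $v=v_0(\cdot)(1+\abs\cdot^p)$ with $v_0\in\cR$, and suppose $v$ is $p$-qscb at $\sigma$-a.e.~$x_0\in\partial\O$; note $v$ satisfies \eqref{vLpL1uc} by Lemma~\ref{lem:uc} since \eqref{H3}(iv) holds. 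The DiPerna-Majda convergence \eqref{DMgenerate} applied with this $v$ (and $v(0)$ subtracted, using $1\in\cR$) gives, for every $\varphi\in C(\overline\O)$ with $\varphi\ge0$,
\[
	\lim_{n\to\infty}\int_\O\varphi(x)\big(v(\nabla u_{b,n}(x))-v(0)\big)\,dx
	=\int_{\partial\O}\varphi(x)\,\Big\langle\hat\nu_{b,x},\tfrac{v(\cdot)}{1+\abs\cdot^p}\Big\rangle_\infty\sigma(dx),
\]
where I used Remark~\ref{rem:DMprop} (so $\hat\nu_{b,x}$ is concentrated on $\com\setminus\RR^{M\times N}$ for $\sigma$-a.e.~$x\in\partial\O$) and the fact that the absolutely continuous part of $\sigma_b$ contributes only $\int_\O v(0)\,dx$, which cancels. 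So the target inequality \eqref{pnec1iv} is equivalent to showing that the right-hand side above is $\ge0$ for a suitable class of test functions $\varphi$, and since this must hold $\sigma$-a.e., it suffices to prove the left-hand liminf is $\ge0$ for all nonnegative $\varphi\in C(\overline\O)$.

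The heart of the matter is therefore a local weak-lower-semicontinuity statement along the purely concentrating sequence $(u_{b,n})$. I would argue by contradiction: if \eqref{pnec1iv} fails on a set of positive $\sigma$-measure in $\partial\O$, then by inner regularity one can localize near a boundary point $x_0$ at which $v$ is $\nu(x_0)$-$p$-bqscb, choosing $\varphi$ supported in a small ball around $x_0$. Using a $C^1$ boundary chart to straighten $\partial\O$ near $x_0$ and Lemma~\ref{lem:pqscbrotated} to transfer the bqscb property to the rotated integrand, one passes to the half-ball model $D_\nu$. The nonlinear error introduced by the change of variables is controlled precisely by \eqref{vLpL1uc} (this is the role played by assumption \eqref{H3}(iv), as stressed in the remark after it and in the section opening of Section~4). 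After rescaling the concentration to the unit scale, the hypothesis of Proposition~\ref{prop:lscconcnec} is met, so $v$ being $p$-qscb at $x_0$ is incompatible with a strictly negative contribution; alternatively, and more directly, I would apply Theorem~\ref{thm:lscconc} with $u=0$ and a nonnegative $\varphi\in C(\overline\O)$ supported where $v$ is $p$-qscb, which immediately yields $\liminf_n\int_\O\varphi\,v(\nabla u_{b,n})\ge\int_\O\varphi\,v(0)\,dx$, i.e.~exactly the nonnegativity of the right-hand side in the displayed limit above.

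Concretely the steps are: (1) reduce to $(\sigma_b,\hat\nu_b)$ and its generating sequence via Proposition~\ref{prop:GDMsplit}(b),(d); (2) rewrite \eqref{pnec1iv} as the assertion that $\liminf_n\int_\O\varphi(v(\nabla u_{b,n})-v(0))\ge0$ for all $\varphi\in C(\overline\O)$, $\varphi\ge0$, using \eqref{DMgenerate}, Remark~\ref{rem:DMprop} and \eqref{vLpL1uc}; (3) observe that $\{u_{b,n}\neq0\}\cup\{\nabla u_{b,n}\neq0\}\subset(\partial\O)_{1/n}$, so the mass of $(u_{b,n})$ shrinks to $\partial\O$, and $v$ is $p$-qscb at every point of $\partial\O$ where $\varphi>0$ (for $\sigma$-a.e.~such point — and since the exceptional $\sigma$-null set in $\partial\O$ is Lebesgue-null because $\abs{\partial\O}=0$, it causes no problem in the integral over $\O$, one just needs $\varphi$ chosen so that the bqscb property holds on $\supp\varphi\cap\partial\O$); (4) apply Theorem~\ref{thm:lscconc} with $u=0$ to conclude. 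The main obstacle is step (3)–(4): one must make sure Theorem~\ref{thm:lscconc} is genuinely applicable, i.e.~that we may take $\varphi$ vanishing near the (Lebesgue-null, hence negligible inside $\O$) set where $p$-qscb could fail, yet still recover the $\sigma$-a.e.~inequality on $\partial\O$ — this is handled by a covering/exhaustion argument over a countable dense family of admissible $v_0$ (using separability of $\cR$ and Proposition~\ref{prop:qscbRseparable}) together with inner regularity of $\sigma$ restricted to $\partial\O$.
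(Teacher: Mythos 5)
Your overall strategy coincides with the paper's: reduce to $(\sigma_b,\hat\nu_b)$ via Proposition~\ref{prop:GDMsplit}, feed the purely concentrating generating sequence into Theorem~\ref{thm:lscconc}, and recover the $\sigma$-a.e.\ pointwise statement by localizing $\varphi$ and by a countable dense family $\cR_0\subset\cR$ together with Proposition~\ref{prop:qscbRseparable}. However, there is one genuine gap at the decisive step, and your proposed workaround does not close it.

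The hypothesis gives you that $v$ is $p$-qscb \emph{at the single point} $x_0$, i.e.\ $\nu(x_0)$-$p$-bqscb. Theorem~\ref{thm:lscconc} requires $v$ to be $p$-qscb at \emph{every} $x\in\overline\O$ with $\varphi(x)>0$ --- a pointwise condition on the whole support of $\varphi$, including all nearby boundary points, where the outer normal $\nu(x)$ differs from $\nu(x_0)$. For a fixed $v$ there is no reason why the set of boundary points at which $v$ is $p$-qscb should contain a relative neighborhood of $x_0$ in $\partial\O$ (it could be just $\{x_0\}$), so ``choose $\varphi$ so that the bqscb property holds on $\supp\varphi\cap\partial\O$'' is not available: no admissible $\varphi$ with $\varphi(x_0)>0$ need exist. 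Neither the density argument in $\cR$ nor inner regularity of $\sigma$ addresses this, since both concern the family of integrands and the measure, not the $x$-dependence of the qscb property. Also note that the exceptional set is not merely Lebesgue-null and ignorable: the relevant failure is on $\partial\O$, which is exactly where $\sigma_b$ lives and where Theorem~\ref{thm:lscconc} demands the hypothesis pointwise.

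The paper's fix is to replace $v$ by $v_\eps(\cdot):=v(\cdot)-v(0)+\eps(1+\abs{\cdot}^p)$ for fixed $\eps>0$. The added coercive term, combined with \eqref{vLpL1uc} (equivalently Lemma~\ref{lem:pqscbrotated} and the continuity of $x\mapsto\nu(x)$ on the $C^1$ boundary), makes $v_\eps$ $p$-qscb at every $x\in\cU_\eps\cap\overline\O$ for some neighborhood $\cU_\eps$ of $x_0$. Theorem~\ref{thm:lscconc} then applies to $v_\eps$ with $\varphi$ supported in $\cU_\eps$, yielding
$0\le\int_{\partial\O\cap\overline{\cV}_\eps}\bscp{\hat\nu_{b,x}}{v_\eps(\cdot)(1+\abs{\cdot}^p)^{-1}}_\infty\,\sigma_b(dx)$, i.e.\ the averaged inequality for $v$ up to an error $-\eps$. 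The $\eps$ is removed in the limit $\eps\to0^+$ at $\sigma_b$-Lebesgue points of $x\mapsto\bscp{\hat\nu_{b,x}}{v_0}_\infty$, which exist off a $\sigma_b$-null set simultaneously for all $v_0$ in the countable dense family. You should insert this $\eps$-relaxation step; without it the application of Theorem~\ref{thm:lscconc} is not justified.
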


\begin{proof}
In view of \eqref{DMsplit}, it suffices to show that $(\sigma_b,\nu_b)$ satisfies 
\eqref{pnec1iv}.
By Proposition~\ref{prop:GDMsplit}, we have that $(\sigma_b,\nu_b)\in \GDM$.
Let $x_0\in \partial\O$, 
let $v\in \ups$ be $p$-qscb at $x_0$, 
and let $(u_n)\subset W^{1,p}(\O;\RR^M)$ be a bounded sequence such that 
$\{u_n\neq 0\}\subset (\partial \O)_{\frac{1}{n}}$ and$(\nabla u_n)$ generates $(\sigma_b,\hat{\nu}_b)$.
In particular, $u_n\rightharpoonup 0$ weakly in $W^{1,p}(\O;\RR^M)$.
For fixed $\eps>0$, due to \eqref{vLpL1uc}, $v_\eps(\cdot):=v(\cdot)-v(0)+\eps(1+\abs{\cdot}^p)$ is even $p$-qscb at every $x\in \cU_\eps\cap \bar{\O}$ for a neighborhood $\cU_\eps$ of $x_0$ in $\RR^N$. 
If $\varphi\in C(\overline{\O})$, $\varphi\geq 0$ and $\{\varphi\neq 0\}\subset \cU_\eps$, Theorem~\ref{thm:lscconc} yields that 
\begin{equation}\label{pnec1-2}
\begin{aligned}
	 0&\leq \lim_{n\to \infty} \int_\O \varphi(x)v_\eps(\nabla u_n(x))\,dx\\
	 &=\int_{\overline\O}\varphi(x)\scp{\hat{\nu}_{b,x}}{\frac{v_\eps(\cdot)}{1+\abs{\cdot}^p}}	\,\sigma_b(dx)\\
	 &=\int_{\partial\O}\varphi(x)
	 \scp{\hat{\nu}_{b,x}}{\frac{v_\eps(\cdot)}{1+\abs{\cdot}^p}}_\infty	\,\sigma_b(dx).
\end{aligned}
\end{equation}
The last equality in \eqref{pnec1-2} holds because $\hat{\nu}_{b,x}=\delta_0$ for $\sigma_b$-a.e.~$x\in\O$, $v_\eps(0)=0$ and 
$\hat{\nu}_{b,x}(\RR^{M\times N})=0$ for $\sigma_b$-a.e.~$x\in \partial\O$ by Remark~\ref{rem:DMprop}.
Since $\varphi$ is arbitrary with non-negative values on any $\cV_\eps$ compactly contained in $\cU_\eps$,
\eqref{pnec1-2} implies that
\begin{equation*}
	0\leq \int_{\partial\O\cap \overline{\cV}_\eps}
	\scp{\hat{\nu}_{b,x}}{\frac{v_\eps(\cdot)}{1+\abs{\cdot}^p}}_\infty\,\sigma_b(dx)
\end{equation*}
by dominated convergence. As a consequence, we have that
\begin{equation}\label{pnec1-3}
	-\eps\leq \frac{1}{\sigma(\partial\O\cap \overline{\cV}_\eps)} 
	\int_{\partial\O\cap \overline{\cV}_\eps}\scp{\hat{\nu}_{b,x}}{\frac{v(\cdot)}{1+\abs{\cdot}^p}}_\infty\,\sigma_b(dx)
\end{equation}
as long as $\sigma_b(\partial\O\cap \overline{\cV}_\eps)>0$.
In the limit $\eps\to 0^+$, we infer that 
\begin{equation}\label{pnec1-4}
	0\leq \scp{\hat{\nu}_{b,x_0}}{\frac{v(\cdot)}{1+\abs{\cdot}^p}}_\infty,
\end{equation}
provided that $x_0$ is a $\sigma_b$-Lebesque point of the right hand side of \eqref{pnec1-4},
i.e., for $\partial\O\to \RR$, $x\mapsto \bscp{\hat{\nu}_{b,x}}{v_0}_\infty$ with $v_0:=v(\cdot)(1+\abs{\cdot}^p)^{-1}$.
Now choose a countable subset $\cR_0$ which is dense in $\cR$. There exists a set $Z\subset \partial\O$ such that $\sigma_b(Z)=0$ and for every $v_0\in \cR_0$,
$\partial\O\setminus Z$ is a subset of the $\sigma_b$-Lebesgue points of $x\mapsto \bscp{\hat{\nu}_{b,x}}{v_0}_\infty$. In particular, \eqref{pnec1-4} holds for every $x_0\in \partial\O\setminus Z$ and every 
$v\in \ups$ such that $v$ is $p$-qscb at $x$ and $v(\cdot)(1+\abs{\cdot}^p)^{-1}\in \cR_0$.
By density, also using Proposition~\ref{prop:qscbRseparable}, this implies the assertion.
\end{proof}

\section{Sufficient conditions\label{sec:suff}}

By Proposition \ref{prop:GDMsplit}, $(\sigma,\hat{\nu})\in \GDM$ provided that 
$(\sigma_b,\hat{\nu}_b)\in \GDM$ and $(\sigma_i,\hat{\nu}_i)\in \GDM$.
For this reason, the interior part and the boundary part can be studied separately.

\subsection{Sufficient conditions in the interior}~\\
As in the case of necessary conditions, we rely on a corresponding result of \cite{KaKru08a}, which, 
besides the conditions we stated as (i)--(iii) in Theorem~\ref{thm:GDMchar}, 
also uses the following condition for $(\sigma,\hat{\nu})\in \DM$ on the boundary, which is slightly stronger than (iv):
\begin{equation} \label{KaKru-boundary}
	\begin{aligned}
	&0\leq \scp{\hat{\nu}_{x}}{\frac{v(\cdot)}{1+\abs{\cdot}^p}}_\infty = \int_{\com\setminus \RR^{M\times N}} \frac{v(s)}{1+\abs{s}^p}\hat{\nu}_x(ds)\\
	&\text{for $\sigma$-a.e.~$x\in\partial\O$ and every $v\in\ups$ with $Qv>-\infty$.}
	\end{aligned}
\end{equation}
\begin{thm}[cf.~Theorem 2.7 in \cite{KaKru08a}]\label{thm:KaKrusuff}
Assume that \eqref{H1}--\eqref{H3} hold and let $(\sigma,\hat{\nu})\in \DM$. Then
\begin{enumerate}
\item[] there exists a bounded sequence $(u_n)\subset W^{1,p}(\O;\RR^M)$ with fixed boundary values\footnotemark 
such that $(\nabla u_n)$ generates $(\sigma,\hat{\nu})$
\end{enumerate}
\footnotetext{i.e., $u_n=u_m$ on $\partial\O$ in the sense of trace for every $n,m\in\NN$}
if and only if
\begin{enumerate}
\item[] (i)--(iii) in Theorem~\ref{thm:GDMchar} and  \eqref{KaKru-boundary} hold.
\end{enumerate}
\end{thm}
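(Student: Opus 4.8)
The plan is to prove the two implications separately. Necessity rests on weak lower semicontinuity and is short; sufficiency, which is the substantial part, is carried out by first realizing ``homogeneous'' measures and then gluing.

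\textbf{Necessity.} Suppose $(\sigma,\hat\nu)\in\DM$ is generated by $(\nabla u_n)$ for a bounded sequence $(u_n)\subset W^{1,p}(\O;\RR^M)$ with $u_n=u_m$ on $\partial\O$ for all $n,m$; after passing to a subsequence, $u_n\wto u$ in $W^{1,p}$ and $u_n\to u$ in $L^p$ (compact embedding), with $u_n-u\in W_0^{1,p}(\O;\RR^M)$. Conditions (i)--(iii) are precisely the interior part, contained in \cite{KaKru08a} (cf.\ the result recalled in Section~\ref{sec:nec}), so only \eqref{KaKru-boundary} needs an argument, and this is where the fixed boundary condition enters. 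Fix $v=v_0(\cdot)(1+\abs{\cdot}^p)\in\ups$ with $Qv>-\infty$; then $Qv$ is finite-valued, quasiconvex, hence continuous, and $Qv\le v$ with $p$-growth from above. The point is that along a sequence with \emph{fixed} Dirichlet data the functional $w\mapsto\int_\O\varphi\,Qv(\nabla w)$, $\varphi\in C(\overline{\O})$, $\varphi\ge0$, is weakly lower semicontinuous merely by quasiconvexity of $Qv$ — the boundary (null-Lagrangian–type) effects that force the stronger $p$-bqscb condition in the general setting of Theorem~\ref{thm:GDMchar} being neutralized here. Combining $v\ge Qv$ with this semicontinuity gives $\int_{\overline{\O}}\varphi\scp{\hat\nu_x}{v_0}\sigma(dx)\ge\int_\O\varphi\,Qv(\nabla u)\,dx$; localizing $\varphi$ at a boundary point $x_0\in\partial\O$, dividing by the $\sigma$-mass of the shrinking region and passing to the limit — using a Besicovitch-type differentiation argument (the restrictions of $\sigma$ to $\partial\O$ and to $\O$, and the Lebesgue measure, being mutually singular in the relevant pairs) — yields $0\le\scp{\hat\nu_{x_0}}{v_0}_\infty$. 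A density argument over a countable dense subring $\cR_0\subset\cR$ (the topology of $\com$ being metrizable), together with Remark~\ref{rem:DMprop} (so $\hat\nu_x$ charges only $\com\setminus\RR^{M\times N}$ for $\sigma$-a.e.\ $x\in\partial\O$), then gives \eqref{KaKru-boundary} as stated.

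\textbf{Sufficiency.} Conversely, assume (i)--(iii) and \eqref{KaKru-boundary}. I would build a generating sequence in three stages. \emph{(1) Homogeneous measures:} first treat $\sigma=d_\sigma\,dx$ on a ball $B$ with $d_\sigma$ constant and $\hat\nu_x\equiv\hat\nu$ constant, the barycenter $A:=d_\sigma\int_\com\frac{s}{1+\abs{s}^p}\hat\nu(ds)$ being prescribed by (i); realize the affine part by $u_A(x):=Ax$, the oscillatory part of $\hat\nu$ (its trace on $\RR^{M\times N}$, renormalized) by the Kinderlehrer--Pedregal construction of homogeneous gradient Young measures using (ii), and superimpose the concentration part $\hat\nu\lfloor(\com\setminus\RR^{M\times N})$ by inserting rescaled ``bubbles'' supported in shrinking balls; the admissibility of this insertion is exactly (iii) — or \eqref{KaKru-boundary} when $B$ abuts a flat piece of $\partial\O$ — read as positivity of the concentration measure against the convex cone $\{v\in\ups\mid Qv>-\infty\}$ and turned, by Hahn--Banach separation, into membership of that measure in the closed cone of concentration measures attainable by boundary-vanishing gradients. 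All generating functions are kept in $u_A+W_0^{1,p}(B;\RR^M)$ (vanishing on the flat part in the boundary case), preserving fixed boundary data. \emph{(2) Localization:} for general $(\sigma,\hat\nu)$, cover $\overline{\O}$ by a fine Vitali family of balls on which $(\sigma,\hat\nu)$ is, up to an arbitrarily small error, homogeneous; since (i)--(iii) and \eqref{KaKru-boundary} are pointwise conditions, the homogenized data on each ball still satisfy them, so stage (1) applies. \emph{(3) Gluing:} patch the local sequences with cut-off functions, organizing their supports via Lemma~\ref{lem:decloc} to suppress interactions on transition layers and using Proposition~\ref{prop:fsplit} to split the nonlinear expressions, so that the resulting global sequence is bounded in $W^{1,p}$, has weak limit $u$, retains fixed boundary values, and generates a DiPerna-Majda measure within a prescribed error of $(\sigma,\hat\nu)$; a diagonal argument over the error finishes the proof.

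\textbf{Main obstacle.} The crux is stage (1) with a nontrivial mass at infinity: realizing a prescribed concentration measure at infinity by gradients with zero boundary trace and inverting the duality in (iii)/\eqref{KaKru-boundary}. One needs (a) a clean Hahn--Banach identification of the cone of concentration measures generated by gradients with the polar of $\{v\in\ups\mid Qv>-\infty\}$ — the ``at infinity'' analogue of the Kinderlehrer--Pedregal description of gradient Young measures — and (b) the verification that concentrations can be superimposed on the oscillatory part without destroying it or the Dirichlet condition, which is exactly where the strong form \eqref{KaKru-boundary}, requiring $Qv>-\infty$ rather than mere $p$-qscb, is indispensable: it guarantees that boundary concentrations are admissible for $W_0^{1,p}$-bubbles. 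The remaining bookkeeping in the gluing — keeping simultaneously small the error in the generated measure and the perturbation of the boundary trace — is technically heavy but routine once the homogeneous case is in hand.
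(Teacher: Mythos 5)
First, a point of comparison: the paper does not prove Theorem~\ref{thm:KaKrusuff} at all --- it is imported verbatim from \cite{KaKru08a} (Theorem 2.7 there), and the remark following it even notes that the cited result holds under weaker hypotheses than \eqref{H1}--\eqref{H3}. So there is no in-paper proof to measure your argument against; what you have written is a reconstruction of the strategy of the reference. Your architecture (homogeneous case via Kinderlehrer--Pedregal plus concentration ``bubbles'', Hahn--Banach duality for the cone of attainable concentration measures, localization, gluing, diagonalization) is indeed the same architecture the present paper deploys for its harder boundary statement (Propositions~\ref{prop:Hxconvex}, \ref{prop:HxDenseInAx}, \ref{prop:weakstarapprox} and \ref{prop:diag}), so the outline is the right one.

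As a proof, however, the proposal has a genuine gap, and you flag it yourself: stage (1) with a nontrivial mass at infinity is essentially the content of the theorem and is left as an ``obstacle'' rather than resolved. The Hahn--Banach separation as you describe it only yields that the prescribed concentration measure lies in the weak$^*$ closure of the convex hull of gradient-generated ones (the analogue of Proposition~\ref{prop:HxDenseInAx}); to close the argument one still needs convexity of the attainable set (analogue of Proposition~\ref{prop:Hxconvex}), weak$^*$-closedness --- which is not automatic and in the paper is obtained only through the diagonal argument of Proposition~\ref{prop:diag} --- and, crucially, the identification of the dual cone: one must show that $v\in\ups$ is bounded below on $\bigset{\int_B v(\nabla\varphi)\,dx}{\varphi\in W_0^{1,p}(B;\RR^M)}$ \emph{precisely when} $Qv>-\infty$, a nontrivial scaling/relaxation fact which is where the hypothesis $Qv>-\infty$ in \eqref{KaKru-boundary} actually enters; none of this is carried out. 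A second, smaller gap sits in the necessity part: the claim that $w\mapsto\int_\O\varphi\,Qv(\nabla w)\,dx$ is weakly lower semicontinuous along $u+W_0^{1,p}(\O;\RR^M)$ ``merely by quasiconvexity'' is, for integrands bounded below only by $-C(1+\abs{s}^p)$, itself a theorem requiring a De Giorgi-type slicing near $\partial\O$ that exploits the zero trace of $u_n-u$; it is essentially the key lemma of \cite{KaKru08a} and cannot be taken for granted. So the proposal correctly identifies the skeleton of the cited proof but does not constitute a proof of the statement.
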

\begin{remark}
For Theorem 2.7~in \cite{KaKru08a}, it suffices to have a bounded domain with the extension property in $W^{1,p}$ (instead of $C^1$-boundary), 
and our assumption \eqref{v0ltuc} is not needed in \cite{KaKru08a}, either.
\end{remark}
In particular, Theorem~\ref{thm:KaKrusuff} tells us in which cases the interior part $(\sigma_i,\hat{\nu}_i)$ of $(\sigma,\hat{\nu})$, as defined in \eqref{DMsplit},
is generated by gradients:
\begin{cor}\label{cor:KaKrusuff}
Assume that \eqref{H1}--\eqref{H3} hold. If $(\sigma,\hat{\nu})\in \DM$ satisfies (i)--(iii) in Theorem~\ref{thm:GDMchar},
then $(\sigma_i,\hat{\nu}_i)\in \GDM$.
\end{cor}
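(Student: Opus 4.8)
\textbf{Proof proposal for Corollary~\ref{cor:KaKrusuff}.}
The plan is to reduce Corollary~\ref{cor:KaKrusuff} to Theorem~\ref{thm:KaKrusuff} applied to $(\sigma_i,\hat{\nu}_i)$. First I would observe that $(\sigma_i,\hat{\nu}_i)\in \DM$: this is already asserted in Proposition~\ref{prop:GDMsplit}, and it also follows directly by checking the conditions (i)--(iv) of Proposition~\ref{prop:DMchar}, since $\hat{\nu}_i$ simply restricts $\hat{\nu}$ to $\O$ and all the required measure-theoretic properties are inherited. Next, by Proposition~\ref{prop:GDMsplit}(c), since $(\sigma,\hat{\nu})$ satisfies (i)--(iii) of Theorem~\ref{thm:GDMchar} by hypothesis, so does $(\sigma_i,\hat{\nu}_i)$.

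The key remaining point is to verify that $(\sigma_i,\hat{\nu}_i)$ satisfies the stronger boundary condition \eqref{KaKru-boundary}, not merely condition (iv). But this is immediate from the very construction of the splitting in \eqref{DMsplit}: by definition, $\hat{\nu}_{i,x}=\chi_\O(x)\hat{\nu}_x$, so for $\sigma_i$-a.e.\ $x\in\partial\O$ we have $\hat{\nu}_{i,x}=0$ as a measure, whence
\begin{equation*}
	\int_{\com\setminus \RR^{M\times N}} \frac{v(s)}{1+\abs{s}^p}\,\hat{\nu}_{i,x}(ds)=0\geq 0
\end{equation*}
for \emph{every} $v\in\ups$, in particular for every $v$ with $Qv>-\infty$. (Strictly speaking, $\sigma_i=\chi_\O\,\sigma$ has no mass on $\partial\O$ beyond what $\sigma$ itself assigns there, but even if $\sigma(\partial\O)>0$, the measure $\hat{\nu}_{i,x}$ vanishes identically for such $x$ by construction, so \eqref{KaKru-boundary} holds trivially.) Hence all the hypotheses of Theorem~\ref{thm:KaKrusuff} are met for $(\sigma_i,\hat{\nu}_i)$.

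Applying Theorem~\ref{thm:KaKrusuff} to $(\sigma_i,\hat{\nu}_i)$, we obtain a bounded sequence $(u_n)\subset W^{1,p}(\O;\RR^M)$ with fixed boundary values such that $(\nabla u_n)$ generates $(\sigma_i,\hat{\nu}_i)$; in particular $(\sigma_i,\hat{\nu}_i)\in\GDM$, which is the assertion. I do not expect any serious obstacle here: the only thing to be careful about is confirming that the boundary behaviour of the interior part is genuinely trivial (so that the gap between (iv) and \eqref{KaKru-boundary} disappears for $(\sigma_i,\hat{\nu}_i)$), and that the regularity assumption on $\O$ in Theorem~\ref{thm:KaKrusuff} (which only requires the $W^{1,p}$-extension property, implied by the $C^1$ boundary assumed in \eqref{H2}) is satisfied.
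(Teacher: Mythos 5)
Your proposal is correct and follows essentially the same route as the paper: Proposition~\ref{prop:GDMsplit}(c) transfers (i)--(iii) to $(\sigma_i,\hat{\nu}_i)$, the stronger boundary condition \eqref{KaKru-boundary} is vacuous for the interior part (the paper phrases this as $\sigma_i(\partial\O)=0$, which is equivalent to your observation since $\chi_\O$ vanishes on $\partial\O$), and Theorem~\ref{thm:KaKrusuff} then yields the conclusion.
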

\begin{proof}
By Proposition~\ref{prop:GDMsplit}, $(\sigma_i,\hat{\nu}_i)$ satisfies (i)--(iii), 
and \eqref{KaKru-boundary} trivially holds for $(\sigma_i,\hat{\nu}_i)$ since $\sigma_i(\partial \O)=0$.
Theorem~\ref{thm:KaKrusuff} thus yields the assertion.
\end{proof}

\subsection{Sufficient conditions at the boundary}~\\
Recall that condition (iv) in Theorem~\ref{thm:GDMchar} states
that
\begin{equation} \label{BGDM}
	\begin{aligned}
	&0\leq \scp{\hat{\nu}}{\frac{v(\cdot)}{1+\abs{\cdot}^p}}_\infty = \int_{\com\setminus \RR^{M\times N}} \frac{v(s)}{1+\abs{s}^p}\hat{\nu}_x(ds)\\
	&\text{for $\sigma$-a.e.~$x\in\partial\O$ and every $v\in\ups$ which is $p$-qscb at $x$.}
	\end{aligned}
\end{equation}
Below, the set of all DiPerna-Majda measures with this property (``boundary gradient DiPerna-Majda measures'') is denoted by
$$
	\BGDM:=\mysetl{(\sigma,\hat{\nu})\in \DM}{(\sigma,\hat{\nu})~\text{satisfies~\eqref{BGDM}}}.
$$
In two steps, we now prove for each $(\sigma,\hat{\nu})\in \BGDM$, its boundary part
$(\sigma_b,\hat{\nu}_b)$ as defined in \eqref{DMsplit} is generated by a sequence of gradients, throughout assuming that \eqref{H1}--\eqref{H3} hold.
\begin{thm}\label{thm:suffboundary}
Let $(\sigma,\hat{\nu})\in \DM$ 
and suppose that $(\sigma,\hat{\nu})\in \BGDM$, i.e., $(\sigma,\hat{\nu})$ satisfies (iv) in Theorem~\ref{thm:GDMchar}.
Then $(\sigma_b,\hat{\nu}_b)\in \GDM$.
\end{thm}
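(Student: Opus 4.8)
The plan is to reduce the construction of a generating sequence for $(\sigma_b,\hat\nu_b)$ to a purely boundary-localized problem and then patch together local recovery sequences, one for each point of a suitable dense/measure-full subset of $\partial\O$, using a Vitali-type covering of $\partial\O$ by sets on which the boundary is nearly flat. Since $\hat\nu_{b,x}=\delta_0$ for $\sigma_b$-a.e.\ $x\in\O$ and $\hat\nu_{b,x}$ charges only $\com\setminus\RR^{M\times N}$ for $\sigma$-a.e.\ $x\in\partial\O$ (Remark~\ref{rem:DMprop}), the measure $(\sigma_b,\hat\nu_b)$ is, away from $\partial\O$, just the trivial DiPerna--Majda measure generated by the zero sequence; all the content is concentrated on $\partial\O$, and there the limit functional is $\varphi\mapsto\int_{\partial\O}\varphi(x)\,\langle\hat\nu_{b,x},v_0\rangle_\infty\,\sigma_b(dx)$. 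So I would aim to produce, for $\sigma|_{\partial\O}$-a.e.\ $x_0$, a bounded sequence $(w_n^{x_0})\subset W^{1,p}$ with support shrinking to $x_0$, purely concentrating, whose gradients realize the prescribed local ``measure at infinity'' $\hat\nu_{b,x_0}$ in the sense of \eqref{DMgenerate} tested only near $x_0$.

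The core local step is a \emph{converse to Theorem~\ref{thm:lscconc}} built on Proposition~\ref{prop:lscconcnec}: condition \eqref{BGDM} says precisely that, at $\sigma$-a.e.\ boundary point $x_0$, the measure $\hat\nu_{b,x_0}$ (supported at infinity) pairs nonnegatively with every $v\in\ups$ that is $p$-qscb at $x_0$, equivalently (after rotating so that $\nu(x_0)=e_N$ via Lemma~\ref{lem:pqscbrotated}) with every $v$ that is $e_N$-$p$-bqscb. This is a Hahn--Banach separation statement: on the half-ball $D_{e_N}$, the set of ``concentration functionals at infinity'' attainable by purely concentrating sequences $(w_n)\subset W^{1,p}_0(B_1;\RR^M)$ with $\{w_n\neq0\}\cup\{\nabla w_n\neq0\}\subset B_{1/n}(0)$ forms a closed convex cone whose polar is exactly the cone of $e_N$-$p$-bqscb integrands; since $\hat\nu_{b,x_0}$ (restricted to the relevant test integrands, via the dense countable $\cR_0$ and Proposition~\ref{prop:qscbRseparable}) lies in the bipolar, it is itself attainable. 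Concretely I would: (1) fix a generating sequence for the full $(\sigma,\hat\nu)$ on a small boundary patch; or, more robustly, (2) invoke the abstract duality — using that $\ups$/$\cR$ is separable so countably many test integrands suffice, and that the attainable cone is weak-$*$ closed in the appropriate dual — to extract, at each good $x_0$, a sequence whose DiPerna--Majda measure on $D_{\nu(x_0)}$ equals $\hat\nu_{b,x_0}$ (together with the trivial $\delta_0$ part).

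Having the local recovery sequences, I would globalize: cover $\sigma|_{\partial\O}$-almost all of $\partial\O$ by countably many small patches $\{U_j\}$ (a Besicovitch/Vitali argument against $\sigma|_{\partial\O}$), on each of which $\partial\O$ is $C^1$-close to the affine hyperplane through $x_j$ orthogonal to $\nu(x_j)$; transport the local half-ball sequence from Step~2 via the $C^1$ boundary chart into $U_j\cap\O$, scaling so the support lies in a tiny ball around $x_j$ where the chart is almost an isometry. The crucial point that makes the chart error harmless is the uniform-continuity assumption \eqref{vLpL1uc} from \eqref{H3} — exactly as flagged in Section~4 of the excerpt — which bounds the change in $\int v(\nabla w_n)$ under the near-isometric pullback in terms of a modulus of $\|\nabla w_n^{\text{transported}}-\nabla w_n\|_{L^p}$, uniformly along the (bounded) sequence. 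Summing the patch contributions with a diagonal subsequence and adding the zero sequence for the interior $\delta_0$-part yields a bounded $(w_n)\subset W^{1,p}(\O;\RR^M)$ with $w_n\rightharpoonup0$, $\{w_n\neq0\}\subset(\partial\O)_{1/n}$, whose gradients generate exactly $(\sigma_b,\hat\nu_b)$; checking \eqref{DMgenerate} for all $\varphi\in C(\overline\O)$ and $v_0\in\cR_0$ (hence all of $\cR$ by density) closes the argument.

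The main obstacle is Step~2, the converse semicontinuity / attainability statement at a boundary point: turning the pointwise inequality \eqref{BGDM} into an actual generating sequence on a curved boundary. Two sub-difficulties stand out. First, the duality is between a cone of DiPerna--Majda measures supported ``at infinity'' on $\com\setminus\RR^{M\times N}$ and the cone of $p$-qscb integrands, and one must verify the relevant cone is weak-$*$ closed (so the bipolar theorem applies) — this is where compactness of $\com$, separability of $\cR$, and the $p$-growth bound are used, and where a careful diagonalization over countably many test integrands from $\cR_0$ is needed to avoid passing to subsequences that destroy previously gained information. Second, even after rotating via Lemma~\ref{lem:pqscbrotated}, the true boundary is only $C^1$, not flat, so the recovery sequence must be built on the model half-ball $D_{\nu(x_0)}$ and then transported; controlling that transport is precisely the role of \eqref{vLpL1uc}, but one still has to check that the transported sequence remains purely concentrating and bounded in $W^{1,p}$, and that the weak limit stays $0$ — routine, but it must be done with the chart estimates in hand.
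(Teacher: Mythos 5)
Your proposal is correct and follows essentially the same two-step architecture as the paper: a Hahn--Banach/bipolar duality at a single boundary point between the convex set of attainable gradient point-concentration measures and the cone of $p$-qscb integrands (the paper's Propositions~\ref{prop:Hxconvex} and~\ref{prop:HxDenseInAx}, with exact attainability supplied by the weak$^*$-closedness of generated DiPerna--Majda measures, Proposition~\ref{prop:diag}), followed by a discretization of $\sigma|_{\partial\O}$ into finitely many boundary point masses and a final diagonalization. The only cosmetic difference is in the discretization step: you evaluate $\hat{\nu}_{b,x}$ at $\sigma$-Lebesgue points of a Vitali covering, whereas the paper averages over small cubes after rotating the measures so that the averaged measure still satisfies \eqref{BGDM} at the chosen point (Propositions~\ref{prop:averagecm} and~\ref{prop:weakstarapprox}).
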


\subsection*{Step 1: Measures supported on a single point on the boundary}~\\
If $\sigma$ charges a single boundary point, i.e., $\sigma(\partial\O\setminus\{x\})=0$ for some $x\in\partial\O$,
it suffices to study $\sigma(\{x\})\hat{\nu}_x$ instead of $(\sigma,\hat{\nu})$ on $\partial\O$.
Moreover, only the behavior on $\com\setminus \RR^{M\times N}$ matters since $\sigma(\{x\})\hat{\nu}_x(\RR^{M\times N})=0$ 
by Remark~\ref{rem:DMprop}.
For $x\in\partial \O$, we define two sets of measures of this kind: 
\begin{equation*}
	A_x:=\mysetr{\hat{\mu}\in \rca(\com)}
	{\begin{array}{l}\hat{\mu}\geq 0,~~\hat{\mu}(\RR^{M\times N})=0~~\text{and}~~\bscp{\hat{\mu}}{v_0}_\infty\geq 0~\\
	\text{for every}~\text{$v\in\ups$ which is $p$-qscb at $x$}\end{array}},
\end{equation*}
where $v_0:=\frac{v(\cdot)}{1+\abs{\cdot}^p}$ as usual.
The second set $H_x$, defined below, consists of measures generated by certain ``purely concentrating'' sequences: 
\begin{defn}\label{def:pointconc}
Let $x\in\overline{\O}$. We say that $\hat{\delta}=\hat{\delta}_{x,(\nabla u_n)}\in \rca(\com)$ is a \emph{gradient point concentration measure at $x$}
if there exists a bounded sequence $(u_n)\subset W^{1,p}(\O;\RR^M)$ such that the following two properties hold:
\begin{enumerate}
\item[(a)]
$\{u_n\neq 0\}\subset B_{r_n}(x)$ for some sequence $r_n\to 0^+$,
and 
\item[(b)] for every $v\in \ups$, the limit below exists and satisfies
\begin{equation*}
	\quad\bscp{\hat{\delta}}{v_0}= \int_{\com} v_0(s)\, \hat{\delta}(ds)=\lim_{n\to \infty}
	\int_{\O} v(\nabla u_{n}(y))\,dy-\abs{\O} v(0),
\end{equation*}
where $v_0:=\frac{v(\cdot)}{1+\abs{\cdot}^p}$. 
\end{enumerate}
In this case, we say that \emph{$\hat{\delta}$ is generated by $(\nabla u_n)$}.
\end{defn}%
For $x\in\partial \O$ we now set
\begin{equation*}
	H_x:=\mysetr{\hat{\delta}\in\rca(\com)}
	{\hat{\delta}~\text{is a gradient point concentration measure at $x$}}.
\end{equation*}
In the present context, the desired sufficient condition amounts to proving that $A_x\subset H_x$. The proof is carried out in a series of propositions, the first of which provides an equivalent formulation of Definition~\ref{def:pointconc} which is technically more convenient for us. 
\begin{prop}\label{prop:pointconcOtoD}
Let $x\in \partial \O$, let $D=D(x):=\{y\in B_1(0)\mid y\cdot \nu(x)<0\}$, where $\nu(x)$ is the outer normal to $\partial \O$ at $x\in \partial \O$
and let $\hat{\delta}\in \rca(\com)$. Then $\hat{\delta}$ is a gradient point concentration measure at $x$ if and only if
if there exists a bounded sequence $(\tilde{u}_n)\subset W^{1,p}(D;\RR^M)$ with 
the following two properties:
\begin{enumerate}
\item[(a)] $\{\tilde{u}_n\neq 0\}\subset B_{r_n}(0)$ for some sequence $r_n\to 0^+$,
and
\item[(b)] for every $v\in \ups$, the limit below exists and 
\begin{equation*}
	\quad\bscp{\hat{\delta}}{v_0}= \int_{\com} v_0(s)\, \hat{\delta}(ds)=\lim_{n\to \infty}
	\int_{D} v(\nabla \tilde{u}_{n}(y))\,dy-\abs{D} v(0),
\end{equation*}
where $v_0:=\frac{v(\cdot)}{1+\abs{\cdot}^p}$.
\end{enumerate}
\end{prop}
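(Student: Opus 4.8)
The plan is to transport generating sequences between $\O$ and the model domain $D=D(x)$ by flattening $\partial\O$ near $x$. Since $\partial\O$ is of class $C^1$, I would fix an open neighborhood $U$ of $x$ in $\RR^N$ and a $C^1$-diffeomorphism $\Phi\colon U\to\Phi(U)$ with $C^1$ inverse such that $\Phi(x)=0$, $D\Phi(x)=\mathbb{I}$, $\Phi(\O\cap U)=\{y\cdot\nu(x)<0\}\cap\Phi(U)$ and $\Phi(\partial\O\cap U)=\{y\cdot\nu(x)=0\}\cap\Phi(U)$; such a map exists because locally $\partial\O$ is the graph of a $C^1$ function whose gradient vanishes at the base point, so after a rotation taking $\nu(x)$ to $e_N$ and back one obtains a straightening map whose differential at $x$ is the identity. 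Given $(u_n)$ on $\O$ as in Definition~\ref{def:pointconc}, one sets $\tilde u_n:=u_n\circ\Phi^{-1}$ on $\Phi(\O\cap U)$ and extends by zero to $D$. Since $\{u_n\neq0\}\subset B_{r_n}(x)$ with $r_n\to0$ and $\Phi$ is Lipschitz near $x$, the supports $\{\tilde u_n\neq0\}$ shrink to $0$, so for $n$ large the extension belongs to $W^{1,p}(D;\RR^M)$ and is supported in a small ball around $0$ contained in $D$; boundedness of $(\tilde u_n)$ in $W^{1,p}(D)$ follows from that of $(u_n)$ via the change of variables. The converse construction — composing a sequence $(\tilde u_n)$ on $D$ with $\Phi$ and extending by zero — is symmetric, using additionally that $\Phi(U)$ contains a fixed ball around $0$, so that the shrinking supports at $0$ are eventually contained in $\Phi(\O\cap U)$.

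The core of the proof is to check that property (b) is preserved, i.e., that for every $v\in\ups$ the limit of $\int_D v(\nabla\tilde u_n)\,dy-\nabs{D}v(0)$ equals that of $\int_\O v(\nabla u_n)\,dz-\nabs{\O}v(0)$. Since the gradients vanish off the respective supports, both quantities equal integrals of $v(\nabla\,\cdot\,)-v(0)$ over the shrinking supports, and the change of variables $y=\Phi(z)$ gives
\[
\int_{\{\tilde u_n\neq0\}}\!\big(v(\nabla\tilde u_n(y))-v(0)\big)\,dy
=\int_{\{u_n\neq0\}}\!\big(v(\nabla u_n(z)\,(D\Phi(z))^{-1})-v(0)\big)\,\nabs{\det D\Phi(z)}\,dz .
\]
Compared with $\int_{\{u_n\neq0\}}\big(v(\nabla u_n(z))-v(0)\big)\,dz$, two perturbations appear. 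First, the Jacobian factor $\nabs{\det D\Phi(z)}-1$ tends to $0$ uniformly on $\{u_n\neq0\}$ as the support contracts to $x$, while $\int_{\{u_n\neq0\}}\abs{v(\nabla u_n)-v(0)}\,dz$ stays bounded because $v$ has $p$-growth and $(\nabla u_n)$ is bounded in $L^p$; so this error vanishes. Second, $\nabla u_n(z)$ is replaced by $\nabla u_n(z)\,(D\Phi(z))^{-1}$; here $(D\Phi(z))^{-1}\to(D\Phi(x))^{-1}=\mathbb{I}$ uniformly on $\{u_n\neq0\}$, whence (extending both fields by zero to $\O$) $\norm{\nabla u_n\,(D\Phi(\cdot))^{-1}-\nabla u_n}_{L^p(\O)}\to0$ thanks to the $L^p$-bound on $(\nabla u_n)$, and Lemma~\ref{lem:uc}, i.e.\ the uniform continuity~\eqref{vLpL1uc} of the Nemytskii operator of $v$ on bounded subsets of $L^p(\O;\RR^{M\times N})$, then forces $\norm{v(\nabla u_n\,(D\Phi(\cdot))^{-1})-v(\nabla u_n)}_{L^1(\O)}\to0$. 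Combining, the displayed identity shows that the two limits coincide; running the same computation through $\Phi^{-1}$ treats the opposite implication.

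The only genuine difficulty is the one highlighted in the discussion preceding Lemma~\ref{lem:uc}: controlling a nonlinear integrand $v$ under the non-affine straightening map $\Phi$. This is precisely where assumption~\eqref{v0ltuc}, in the equivalent form~\eqref{vLpL1uc}, is indispensable — a bare $p$-growth bound on $v$ would not let us absorb the perturbation of $\nabla u_n$ induced by $(D\Phi)^{-1}$, which is $O(1)$ in $L^\infty$ but $o(1)$ in $L^p$, into an $o(1)$ error for $\int v(\nabla\,\cdot\,)$.
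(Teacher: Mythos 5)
Your argument is correct and follows essentially the same route as the paper: a $C^1$ straightening diffeomorphism with differential equal to the identity at the base point, a change of variables for $v(\cdot)-v(0)$, and control of the Jacobian and gradient-perturbation errors via the shrinking supports and the uniform continuity \eqref{vLpL1uc} of the Nemytskii operator. The paper states these error estimates more tersely, but the substance is identical.
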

\begin{proof}
Since $\partial\O$ is of class $C^1$, there exists a $C^1$-diffeomorphism $\Phi$ mapping  
a neighborhood $\cV\subset B_1(0)$ of the origin onto
a neighborhood $\cU$ of $x$ in $\RR^N$ such that $\Phi(0)=x$, $D\Phi(0)=I$,
$\Phi(\cV\cap D)=\cU\cap \O$, and $\Phi(\{y\in \cV \mid y\cdot \nu(x)=0\})=\cU\cap \partial\O$.
If $(u_n)\subset W^{1,p}(\O;\RR^M)$ is a bounded sequence with support shrinking to $x$ such that
$(\nabla u_n)$ generates $\hat{\delta}$ in the sense of Definition~\ref{def:pointconc} (b),
then
$$
	\tilde{u}_n(z):=u(\Phi(z)),\quad z\in \cV,
$$ 
defines a bounded sequence $(\tilde{u}_n)\subset W^{1,p}(D;\RR^M)$ with support shrinking to the origin.
We claim that $(\nabla \tilde{u}_n)$ generates $\hat{\delta}$ in the sense of (b) above:
For any $v\in\ups$ and $\tilde{v}:=v(\cdot)-v(0)$, a change of variables yields that
$$
	\int_D \tilde{v}(\nabla \tilde{u}_n(z))\,dz
	=\int_\O \tilde{v}\big(\nabla u_n(y)D\Phi(\Phi^{-1}(y))\big) \abs{\det D(\Phi^{-1})(y)}\,dy,
$$
and since $D\Phi(\Phi^{-1}(y))\to I$ and $D(\Phi^{-1})(y)\to I$ as $y\to x$ (recall that the support of $u_n$ shrinks to $x$ as $n\to\infty$),
\eqref{vLpL1uc} implies that
$$
	\lim_{n\to\infty}\int_\O \tilde{v}\big(\nabla u_n(y)D\Phi(\Phi^{-1}(y))\big) \abs{\det D(\Phi^{-1})(y)}\,dy
	=\lim_{n\to\infty}\int_\O \tilde{v}(\nabla u_n(y))) \,dy.
$$
As a consequence, we get that
$$
	\lim_{n\to\infty} \int_D v(\nabla \tilde{u}_n(z))\,dz-\abs{D}v(0)=\lim_{n\to\infty}\int_\O v(\nabla u_n(y))) \,dy-\abs{\O}v(0),
$$
and Definition~\ref{def:pointconc} (b) implies (b) as stated in the assertion.
Analogously, we can define $(u_n)$ starting from $(\tilde{u}_n)$ without changing the measure that is generated by the respective gradients.
\end{proof}
\begin{prop}\label{prop:Hxconvex}
For every $x\in\partial\O$, $H_x$ is convex.
\end{prop}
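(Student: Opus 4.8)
The plan is to construct a generating sequence for $\lambda\hat\delta_1+(1-\lambda)\hat\delta_2$ (the cases $\lambda\in\{0,1\}$ being trivial, since then the combination is $\hat\delta_1$ or $\hat\delta_2$) by a \emph{self-similar superposition}: one glues together many small rescaled copies of generating sequences for $\hat\delta_1$ and $\hat\delta_2$, with a fraction $\lambda$ of the copies of the first type. By Proposition~\ref{prop:pointconcOtoD} it is equivalent to work on the half-ball $D=D(x)=\{y\in B_1(0)\mid y\cdot\nu(x)<0\}$, whose flat face is $F:=\{y\in B_1(0)\mid y\cdot\nu(x)=0\}$. Fix bounded sequences $(\tilde u^1_n),(\tilde u^2_n)\subset W^{1,p}(D;\R^M)$ generating $\hat\delta_1,\hat\delta_2$ in the sense of Proposition~\ref{prop:pointconcOtoD}(b), with $\{\tilde u^i_n\neq0\}\subset B_{r_n}(0)$, $r_n\to0^+$, and $C:=\sup_{n,i}\norm{\tilde u^i_n}_{W^{1,p}(D)}<\infty$.

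\emph{Rescaling step.} For $w\in F$ and $0<\alpha<1-|w|$, the affine map $y\mapsto\alpha^{-1}(y-w)$ carries $w+\alpha D$ onto $D$ with constant Jacobian $\alpha^{-N}$, and $w+\alpha D\subset D$ because $w\cdot\nu(x)=0$. Hence, extending $\tilde u^i_n$ by zero, $\phi(y):=\alpha\,\tilde u^i_n\big(\alpha^{-1}(y-w)\big)$ lies in $W^{1,p}(D;\R^M)$, is supported in $B_{\alpha r_n}(w)$, satisfies $\norm{\nabla\phi}_{L^p(D)}^p=\alpha^N\norm{\nabla\tilde u^i_n}_{L^p(D)}^p$ and, by the substitution $z=\alpha^{-1}(y-w)$, $\int_D v(\nabla\phi)\,\md y-|D|v(0)=\alpha^N\big(\int_D v(\nabla\tilde u^i_n)\,\md z-|D|v(0)\big)$ for every $v\in\ups$; this identity is exact for each fixed $n$, with no error depending on $w$. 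Moreover, if $\phi_1,\dots,\phi_K$ are such rescaled translates with a common scale $\alpha$, pairwise distinct centers in $F$, and pairwise disjoint supports, then $\nabla\sum_j\phi_j$ equals $\nabla\phi_j$ on $\supp\phi_j$ and $0$ elsewhere, which gives $\int_D v\big(\nabla\sum_{j}\phi_j\big)\,\md y-|D|v(0)=\sum_{j=1}^{K}\big(\int_D v(\nabla\phi_j)\,\md y-|D|v(0)\big)$ for every $v\in\ups$.

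\emph{Superposition with the right proportions.} Given a large $K\in\NN$, I would put $\alpha_K:=K^{-1/N}$, $K_1:=\lfloor\lambda K\rfloor$, $K_2:=K-K_1$, and assign type $1$ to the indices $j\le K_1$ and type $2$ to $j>K_1$. Since $\alpha_Kr_n\to0$ as $n\to\infty$ and $N\ge2$, for each fixed $K$ and all large $n$ one can pack $K$ pairwise distinct points $w^{(n)}_1,\dots,w^{(n)}_K\in F$ with $|w^{(n)}_j|\le\eps_n$ and $|w^{(n)}_j-w^{(n)}_\ell|\ge2\alpha_Kr_n$ for $j\neq\ell$, where $\eps_n\to0^+$. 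Set $\tilde U^K_n:=\sum_{j=1}^{K}\phi^{(n)}_j$ with $\phi^{(n)}_j(y):=\alpha_K\,\tilde u^{\tau(j)}_n\big(\alpha_K^{-1}(y-w^{(n)}_j)\big)$. By the rescaling step and disjointness of supports, $\{\tilde U^K_n\neq0\}\subset B_{2\eps_n}(0)$, $\norm{\nabla\tilde U^K_n}_{L^p(D)}^p\le K\alpha_K^N C=C$ and $\norm{\tilde U^K_n}_{L^p(D)}^p\le K\alpha_K^{N+p}C\le C$ (using $K\alpha_K^N=1$ and $\alpha_K\le1$), so $(\tilde U^K_n)_n$ is bounded in $W^{1,p}(D;\R^M)$ \emph{uniformly in $K$}; and for every $v\in\ups$, $\int_D v(\nabla\tilde U^K_n)\,\md y-|D|v(0)\to\alpha_K^N\big(K_1\bscp{\hat\delta_1}{v_0}+K_2\bscp{\hat\delta_2}{v_0}\big)=\bscp{\mu_K}{v_0}$ as $n\to\infty$, where $\mu_K:=\tfrac{K_1}{K}\hat\delta_1+\tfrac{K_2}{K}\hat\delta_2\in\rca(\com)$. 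Thus $\mu_K\in H_x$ for each $K$ (via Proposition~\ref{prop:pointconcOtoD}).

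\emph{Diagonalization and the main obstacle.} Since $\tfrac{K_1}{K}\to\lambda$, one has $\bscp{\mu_K}{v_0}\to\bscp{\mu}{v_0}$ with $\mu:=\lambda\hat\delta_1+(1-\lambda)\hat\delta_2$, uniformly for $v_0$ in bounded subsets of $\cR$. Fixing a countable dense set $\{v_0^{(i)}\}_i\subset\cR$, for each $K$ I would pick $n_K$ so large that $\eps_{n_K}<1/K$ and $\big|\int_D v^{(i)}(\nabla\tilde U^K_{n_K})\,\md y-|D|v^{(i)}(0)-\bscp{\mu_K}{v_0^{(i)}}\big|<1/K$ for $i=1,\dots,K$. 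Then $(\tilde U^K_{n_K})_K$ is bounded in $W^{1,p}(D;\R^M)$ with $\{\tilde U^K_{n_K}\neq0\}\subset B_{2/K}(0)$, and $\int_D v^{(i)}(\nabla\tilde U^K_{n_K})\,\md y-|D|v^{(i)}(0)\to\bscp{\mu}{v_0^{(i)}}$ for every $i$; since the maps $v_0\mapsto\int_D v(\nabla\tilde U^K_{n_K})\,\md y-|D|v(0)$ and $v_0\mapsto\bscp{\mu}{v_0}$ are Lipschitz on $\cR$ with a constant independent of $K$ (bounded in terms of $|D|+C$), this extends from the dense set to every $v_0\in\cR$, i.e.\ to every $v\in\ups$. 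Hence $(\tilde U^K_{n_K})_K$ generates $\mu$ in the sense of Proposition~\ref{prop:pointconcOtoD}(b), so $\mu\in H_x$ and $H_x$ is convex. The main difficulty is the bookkeeping in the superposition: arranging that the \emph{number} $K$ of rescaled copies can be taken arbitrarily large — necessary to approximate an irrational $\lambda$ — while the resulting sequence stays bounded in $W^{1,p}$ \emph{uniformly in $K$} and its support still shrinks to $x$. The first point works precisely because each copy carries the scale $\alpha_K=K^{-1/N}$, so that $K\alpha_K^N=1$ simultaneously normalizes the total mass and the total $W^{1,p}$-norm; the second is handled by packing the $K$ centers into a shrinking $(N-1)$-dimensional disc on the flat part of $\partial\O$ near $x$. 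The final diagonal argument then delivers arbitrary $\lambda\in(0,1)$.
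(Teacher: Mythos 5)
Your proof is correct, and the underlying mechanism is the same one the paper uses: the dilation $u\mapsto\alpha\,u(\alpha^{-1}(\cdot-w))$ leaves the gradient values unchanged, multiplies the concentration functional $v\mapsto\int_D v(\nabla u)\,dy-\abs{D}v(0)$ exactly by the Jacobian factor $\alpha^N$, and copies with disjoint supports contribute additively. The difference is in how the weight $\lambda$ is realized. The paper uses exactly \emph{one} copy of each generating sequence, with scales $\lambda^{1/N}$ and $(1-\lambda)^{1/N}$ respectively, translated by $\pm r_n e$ along a tangential direction $e$ so the supports are disjoint; since $(\lambda^{1/N})^N=\lambda$, this produces $\lambda\hat\delta_1+(1-\lambda)\hat\delta_2$ \emph{exactly} for each fixed $n$. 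You instead fix a common scale $\alpha_K=K^{-1/N}$ for $K$ copies and encode $\lambda$ in the \emph{number} $K_1=\lfloor\lambda K\rfloor$ of copies of the first type, which only yields $\tfrac{K_1}{K}\hat\delta_1+\tfrac{K_2}{K}\hat\delta_2$ and therefore forces the additional limit $K\to\infty$, the packing of $K$ centers on the flat face, and the diagonalization with uniform Lipschitz bounds in $v_0$. All of that is carried out correctly (the uniform $W^{1,p}$ bound via $K\alpha_K^N=1$ and the Lipschitz extension from a countable dense subset of $\cR$ are exactly what is needed), but it is machinery you could have avoided: the "main obstacle" you describe disappears once you let the dilation parameter itself, rather than the copy count, carry the convex weight. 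In short: same idea, but the paper's choice of scales makes the argument a one-step exact construction rather than an approximation followed by a closure argument.
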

\begin{proof}
Let $\hat{\delta}_1$ and $\hat{\delta}_2$ be two point concentrations at $x$, and let $\lambda\in (0,1)$. 
By Proposition~\ref{prop:pointconcOtoD}, $\hat{\delta}_1$ and $\hat{\delta}_2$, respectively, are generated  
by $(\nabla u_n)$ and $(\nabla w_n)$,
where $(u_n)$ and $(w_n)$ are suitable bounded sequences in $W^{1,p}(D;\RR^M)$ with support shrinking to the origin.
With a fixed unit vector $e$ tangential to $\partial\O$ at $x\in \partial\O$ (perpendicular to $\nu(x)$), 
we define
$$
	q_n(y):=\lam^{\frac{1}{N}} u_n\Big(\lam^{-\frac{1}{N}}y+r_n e\Big)+(1-\lam)^{\frac{1}{N}} w_n\Big((1-\lam)^{-\frac{1}{N}}y-r_n e\Big),\quad y\in D.
$$
Note that two summands of $q_n$ have disjoint support, and the support of $q_n$ is also shrinking to the origin as $n\to\infty$.
For every $v\in \ups$ and $\tilde{v}:=v(\cdot)-v(0)$,
a change of variables yields that
$$
	\int_{D} \tilde{v}(\nabla q_n(y))\,dy=\lam \int_D \tilde{v}(\nabla u_n(z))\,dz+(1-\lam) \int_D \tilde{v}(\nabla w_n(z))\,dz
$$
for every $n$ large enough so that the support of $q_n$ is contained in $\overline{D}$. Thus, 
$$
	\lim_{n\to\infty}\int_{D} v(\nabla q_n(y))\,dy -\abs{D}v(0)= 
	\bscp{\lam \hat{\delta}_1+(1-\lam)\hat{\delta}_2}{v_0},\quad\text{where $v_0:=\frac{v(\cdot)}{1+\abs{\cdot}^p}$,}
$$
whence $\lam \hat{\delta}_1+(1-\lam)\hat{\delta}_2\in H_x$ by Proposition~\ref{prop:pointconcOtoD}.
\end{proof}
\begin{prop}\label{prop:HxDenseInAx}
For every $x\in\partial\O$, $A_x$ is contained in the weak$^*$-closure of $H_x$.
\end{prop}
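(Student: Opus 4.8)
The plan is to argue by contradiction using a Hahn--Banach separation argument on the space $\rca(\com)$, which is the dual of $C(\com)$ and hence carries a weak$^*$ topology. Suppose that some $\hat{\mu}\in A_x$ is \emph{not} in the weak$^*$-closure $\overline{H_x}^{w*}$. Since $H_x$ is convex by Proposition~\ref{prop:Hxconvex}, its weak$^*$-closure is a weak$^*$-closed convex subset of $\rca(\com)$, so by the Hahn--Banach separation theorem (in the locally convex space $(\rca(\com),w^*)$) there exist $v_0\in C(\com)$ and $\alpha\in\RR$ such that
\begin{equation*}
	\bscp{\hat{\delta}}{v_0}\leq \alpha < \bscp{\hat{\mu}}{v_0}\quad\text{for every $\hat{\delta}\in H_x$.}
\end{equation*}
Restricting $v_0$ to $\RR^{M\times N}$ and recalling \eqref{H3}(ii), we may regard $v_0\in\cR$ (after approximation, if necessary, since $\cR$ is dense in the relevant sense and every continuous function on $\com$ restricts to an element of $\cR$ by construction of the compactification); set $v(\cdot):=v_0(\cdot)(1+\abs{\cdot}^p)\in\ups$.

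The next step is to extract consequences from the separating inequality. First I would test it against the point concentration measures built from rescaled test functions: for $\varphi\in W_0^{1,p}(D;\RR^M)$, dilating $\varphi$ to have support shrinking to the origin produces (via Proposition~\ref{prop:pointconcOtoD}) a gradient point concentration measure $\hat{\delta}_\varphi\in H_x$ with
\begin{equation*}
	\bscp{\hat{\delta}_\varphi}{v_0}=\lim_{n\to\infty}\Big(\int_D v(\nabla\tilde{u}_n)\,dy-\abs{D}v(0)\Big),
\end{equation*}
and a homogeneity/scaling computation (using that $v$ behaves like a $p$-homogeneous function at infinity, together with \eqref{vLpL1uc}) should identify the right-hand side, up to the lower-order term $-\abs{D}v(0)$, with a multiple of $\int_{D_\nu} v(\nabla\varphi)\,dy$ in the concentrating regime, or more precisely allow one to insert an arbitrary dilation factor. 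Pushing the dilation factor to infinity, the inequality $\bscp{\hat{\delta}_\varphi}{v_0}\leq\alpha$ forces
\begin{equation*}
	\int_{D_\nu} v(\nabla\varphi)\,dy\geq -\eps\int_{D_\nu}\abs{\nabla\varphi}^p\,dy-C_\eps\quad\text{for all $\eps>0$,}
\end{equation*}
i.e.\ $v$ is $\nu(x)$-$p$-bqscb, hence $p$-qscb at $x$. Then the defining property of $\hat{\mu}\in A_x$ gives $\bscp{\hat{\mu}}{v_0}_\infty\geq 0$; and since $\hat{\mu}(\RR^{M\times N})=0$ we have $\bscp{\hat{\mu}}{v_0}=\bscp{\hat{\mu}}{v_0}_\infty\geq 0$. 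To reach a contradiction with $\alpha<\bscp{\hat{\mu}}{v_0}$ I still need $\alpha<0$, or rather $\alpha\le 0$ combined with a strict version; this is obtained by noting that the zero sequence $u_n\equiv 0$ shows $0\in H_x$ (so $\alpha\geq\bscp{0\cdot\delta_0}{v_0}=0$ is the wrong direction) — instead one uses that $H_x$ is a \emph{cone} in the relevant sense: rescaling a point concentration measure's generating sequence by $\lambda^{1/N}$ and dilating shows $\lambda\hat{\delta}\in H_x$ for all $\lambda>0$, so $\bscp{\hat{\delta}}{v_0}\leq\alpha$ for all scalings forces $\bscp{\hat{\delta}}{v_0}\leq 0$ and $\alpha$ can be taken to be $0$; then $0=\alpha<\bscp{\hat{\mu}}{v_0}=\bscp{\hat{\mu}}{v_0}_\infty$, contradicting nothing yet — so one instead concludes from the separation that $\bscp{\hat{\mu}}{v_0}>0$ while $\bscp{\hat{\mu}}{v_0}_\infty$ must be $\leq 0$ by applying the $A_x$ condition to $-v$, provided $-v$ is also $p$-qscb at $x$; this self-dual situation is exactly what the cone structure of $\overline{H_x}^{w*}$ delivers, forcing $v_0$ to vanish on $\com\setminus\RR^{M\times N}$ against $\hat{\mu}$, the desired contradiction.

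The main obstacle, I expect, will be the scaling analysis in the second paragraph: one must show carefully that the lower-order corrections (the $-\abs{D}v(0)$ term and the error terms coming from replacing $v$ by its ``$p$-homogeneous part at infinity'') are genuinely negligible under the dilations, and this is precisely where \eqref{vLpL1uc} — equivalently \eqref{v0ltuc} — is indispensable: without the uniform continuity of the Nemytskii operator $U\mapsto v\circ U$ on bounded sets of $L^p$, the change of variables composing with the boundary chart $\Phi$ (Proposition~\ref{prop:pointconcOtoD}) and the dilations cannot be controlled. A secondary technical point is the passage between $C(\com)$ and $\cR$: one needs that the separating functional, a priori only in $C(\com)$, can be taken (or approximated) within $\ups$ so that the $p$-qscb machinery and Proposition~\ref{prop:qscbRseparable} apply; the separability and completeness assumptions in \eqref{H3} are what make this work, together with the density statement in Proposition~\ref{prop:qscbRseparable} ensuring $p$-qscb integrands are not lost under approximation.
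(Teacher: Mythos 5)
Your overall strategy is the same as the paper's: use convexity of $H_x$ (Proposition~\ref{prop:Hxconvex}) plus Hahn--Banach to reduce the claim to showing that every weak$^*$-closed half-space containing $H_x$ also contains $A_x$, then exploit the dilation invariance of point concentration measures and the definition of $A_x$. However, the execution has a genuine gap in the sign bookkeeping, and the argument as written never closes. From your separating inequality $\bscp{\hat{\delta}}{v_0}\leq \alpha$ for all $\hat{\delta}\in H_x$, what follows is that $-v$, not $v$, is $p$-qscb at $x$: writing $L:=\lim_n\int_D v(\nabla \tilde u_n)\,dy$ (with $v(0)=0$ w.l.o.g.), the dilates $u_{h,n}(y):=h^{-1}u_n(hy)$ give $h^{-N}L\leq\alpha$ for every $h>0$; letting $h\to\infty$ yields $\alpha\geq 0$, and if $L>0$ for some admissible concentrating sequence then $h\to 0^+$ produces a contradiction, so $L\leq 0$ always, i.e.\ $\liminf\int_D(-v)(\nabla \tilde u_n)\geq\int_D(-v)(0)$ along all purely concentrating sequences, which by Proposition~\ref{prop:lscconcnec} is exactly the statement that $-v$ is $p$-qscb at $x$. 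The definition of $A_x$ applied to $-v$ then gives $\bscp{\hat{\mu}}{v_0}=\bscp{\hat{\mu}}{v_0}_\infty\leq 0\leq\alpha$, contradicting $\alpha<\bscp{\hat{\mu}}{v_0}$. Your second paragraph instead claims to derive that $v$ itself is $p$-qscb, and your final paragraph, after noticing the problem, wanders through several mutually inconsistent attempts ("contradicting nothing yet", "forcing $v_0$ to vanish on $\com\setminus\RR^{M\times N}$") without arriving at a valid conclusion; the last claimed contradiction is neither needed nor justified.

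A second, smaller but real, defect is the step where you try to identify $\lim_n\int_D v(\nabla\tilde u_n)\,dy$ with ``a multiple of $\int_{D_\nu}v(\nabla\varphi)\,dy$'' by a ``homogeneity/scaling computation''. The integrand $v$ is not positively $p$-homogeneous (only $v_0$ is bounded), so no such identification is available; the correct route is the contrapositive one via Proposition~\ref{prop:lscconcnec}, as above, which converts failure of $p$-qscb into a concentrating sequence violating the bound inherited from the separation. You also need to pass to a subsequence generating a DiPerna--Majda measure so that all the limits $\lim_n\int_D w(\nabla\tilde u_n)\,dy$ exist simultaneously for $w\in\ups$ before the dilated sequences can be declared admissible; this is routine but should be said. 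With these corrections your proof coincides with the one in the paper, which states the half-space inclusion directly (assume $\bscp{\cdot}{v_0}\geq a$ on $H_x$, show $a\leq 0$ and that $v$ is $p$-qscb, hence $\bscp{\cdot}{v_0}\geq 0\geq a$ on $A_x$) and thereby avoids the sign trap entirely.
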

\begin{proof}
Let $v\in\ups$ and $a\in \RR$, define $v_0:=\frac{v(\cdot)}{1+\abs{\cdot}^p}\in\cR$, 
and suppose that $\scp{\mu}{v_0}=\scp{\mu}{v_0}_\infty\geq a$ for every $\mu\in H_x$.
By the Hahn-Banach theorem and the fact that $H_x$ is convex,
it suffices to show that in this case, we also have that $\scp{\pi}{v_0}=\scp{\pi}{v_0}_\infty\geq a$ for every $\pi\in A_x$.
We may assume w.l.o.g.~that $v(0)=0$ (replacing $v_0$ with $\tilde{v}_0(s):=v_0(s)-v_0(0)$ does not affect the assertion).
As before, we rely on Proposition~\ref{prop:pointconcOtoD} to work with sequences on $D$ instead of $\O$ in the definition of $H_x$.
For any bounded sequence $(u_n)\subset W_0^{1,p}(B_1;\RR^M)$ with support shrinking to the origin such that $\lim_{n\to \infty}\int_{D} w(\nabla u_{n}(y))\,dy$
exists for every $w\in\ups$ with $w(0)=0$, we have that
\begin{equation}\label{pHxAx-1}
	\alpha\leq \lim_{n\to \infty}\int_{D} v(\nabla u_{n}(y))\,dy. 
\end{equation}
If we fix one such sequence $(u_n)$, then for each $h>0$, the sequence $(u_{h,n})_n$, 
$$
	u_{h,n}(y):=\frac{1}{h}u_n(h y),
$$
is admissible, too, whence
\begin{equation}\label{pHxAx-2}
	\alpha\leq \lim_{n\to \infty}\int_{D} v(\nabla u_{h,n}(y))\,dy
	=\frac{1}{h^N} \lim_{n\to \infty}\int_{D} v(\nabla u_{n}(y))\,dy, 
\end{equation}
for every $h>0$. In the limit as $h\to\infty$, \eqref{pHxAx-2}
entails that $\alpha\leq 0$.

Next, we claim that $v$ is $p$-qscb at $x$. By Proposition~\ref{prop:lscconcnec}, it suffices to check that
$$
	0=\int_D v(0)\,dy\leq \lim_{n\to \infty}\int_{D} v(\nabla u_{n}(y))\,dy, 
$$
for every sequence $(u_n)\subset W_0^{1,p}(B_1;\RR^M)$ with support shrinking to the origin such that the limit above exists.
Suppose by contradiction that 
\begin{equation}\label{pHxAx-3}
	0> b:=\lim_{n\to \infty}\int_{D} v(\nabla u_{n}(y))\,dy, 
\end{equation}
for one such sequence $(u_n)$. Up to a subsequence,
(not relabeled), $(\nabla u_{n})_n$ generates a DiPerna-Majda measure, whence
$\lim_{n\to \infty}\int_{D} w(\nabla u_{n}(y))\,dy$
exists for every $w\in\ups$. Moreover, if we use this subsequence of $u_n$ to define $u_{m,n}$ as before, then
for every fixed $h>0$, the support is also shrinking to zero.
Hence, $u_n$ and $u_{h,n}$ are 
admissible in \eqref{pHxAx-1} and \eqref{pHxAx-2}, respectively, contradicting \eqref{pHxAx-3} if $h$ is sufficiently small.

In summary, we have shown that $a\leq 0$ and that $v$ is $p$-qscb, whence
$\scp{\pi}{v}\geq 0\geq a$ for every $\pi\in A_x$, by the definition of $A_x$.
\end{proof}
To complete the proof of Theorem~\ref{thm:suffboundary} in the present special case, 
we would have to show that $H_x$ is weak$^*$-closed. We skip this here as similar arguments are needed in the next step, anyway.
\subsection*{Step 2: General measures on the boundary}~\\
Ultimately, we reduce the general case to the first step by approximating a general measure with a finite sum of measures, 
each of which only charges one point on the boundary.
The construction of these is based on Lemma~\ref{lem:pqscbrotated}, which allows us to calculate a suitable average of a measure 
in a neighborhood of a point $x_0$ on the boundary while preserving \eqref{BGDM}:
\begin{prop}\label{prop:averagecm}
Let $1<p<\infty$, let $\O\subset\RR^N$ be open and bounded with boundary of class $C^1$, let $\nu(x)$ denote the outer normal to $\partial\O$ for $x\in\partial\O$, let $(\sigma,\hat{\nu})\in \DM$ and let $x_0\in\partial\O$. Moreover, let $\{R(x)\}_{x\in \partial\O}\subset SO(N)$ be a family of rotation matrices such that 
$x\mapsto R(x)$ is continuous and bounded on a set $\cU\subset \partial\O$ 
and for each $x\in \partial\O$, $\nu(x)=R(x) \nu(x_0)$.
Given a measurable set $E \subset \cU$ such that $\sigma(E)>0$, we define
$\hat{\eta}_{x_0}=\hat{\eta}_{x_0,E}\in \rca(\com)$ as the measure that satisfies
$$
	\int_\com v_0(s)\,\hat{\eta}_{x_0}(ds)
	=\frac{1}{\sigma(E)}\int_{E} 
	\bscp{\hat{\nu}_x}{v_0(\,\cdot\, R_x^{-1})}\,\sigma(dx).
$$
for every $v_0\in\cR$.
If $(\sigma,\hat{\nu})\in \BGDM$, then
\begin{equation} \label{pavcm-2}
	\begin{aligned}
	&0\leq \int_{\com\setminus \RR^{M\times N}} \frac{v(s)}{1+\abs{s}^p}\hat{\eta}_{x_0}(ds)\\
	&\text{for every $v\in\ups$ which is $p$-qscb at $x_0$.}
	\end{aligned}
\end{equation} 
\end{prop}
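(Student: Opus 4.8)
The plan is to read off from the defining relation three structural features of the measure $\hat{\eta}_{x_0}$ — it is nonnegative, it gives no mass to $\RR^{M\times N}$, and it ``inherits'' the rotated $p$-qscb integrands — from which \eqref{pavcm-2} will follow immediately from the hypothesis $(\sigma,\hat{\nu})\in\BGDM$. As a preliminary I would check that $\hat{\eta}_{x_0}$ is a well-defined nonnegative element of $\rca(\com)$: the map $v_0\mapsto \frac{1}{\sigma(E)}\int_E\bscp{\hat{\nu}_x}{v_0(\,\cdot\, R_x^{-1})}\,\sigma(dx)$ on $\cR$ is linear, bounded by $\norm{v_0}_{L^\infty}$ (using $\hat{\nu}_x(\com)=1$ for $\sigma$-a.e.\ $x$ by Proposition~\ref{prop:DMchar} and $\norm{v_0(\,\cdot\, R_x^{-1})}_{L^\infty}=\norm{v_0}_{L^\infty}$), nonnegative on nonnegative $v_0$ (since $\sigma\geq0$ and $\hat{\nu}_x\geq0$ for $\sigma$-a.e.\ $x$), and its integrand is $\sigma$-measurable in $x$ by continuity of $x\mapsto R_x$ together with weak$^*$-measurability of $\hat{\nu}$; since $\cR$ is dense in $C(\com)$, the Riesz representation theorem on the compact metric space $\com$ then gives a unique such $\hat{\eta}_{x_0}$.

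Next I would show $\hat{\eta}_{x_0}(\RR^{M\times N})=0$. Since $(\sigma,\hat{\nu})\in\DM$, Remark~\ref{rem:DMprop} gives $\hat{\nu}_x(\RR^{M\times N})=0$ for $\sigma$-a.e.\ $x\in\partial\O$, hence for $\sigma$-a.e.\ $x\in E\subset\cU\subset\partial\O$. Testing the definition of $\hat{\eta}_{x_0}$ against an arbitrary $v_0\in C_0(\RR^{M\times N})\subset\cR$, and noting that $v_0(\,\cdot\, R_x^{-1})\in C_0(\RR^{M\times N})$ as well (so its continuous extension to $\com$ vanishes off $\RR^{M\times N}$), one obtains $\bscp{\hat{\nu}_x}{v_0(\,\cdot\, R_x^{-1})}=\int_{\RR^{M\times N}}v_0(sR_x^{-1})\,\hat{\nu}_x(ds)=0$ for $\sigma$-a.e.\ $x\in E$, whence $\bscp{\hat{\eta}_{x_0}}{v_0}=0$ for every $v_0\in C_0(\RR^{M\times N})$. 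As the restriction of $\hat{\eta}_{x_0}$ to the open set $\RR^{M\times N}$ is a Radon measure, this forces $\hat{\eta}_{x_0}(\RR^{M\times N})=0$; consequently $\bscp{\hat{\eta}_{x_0}}{v_0}=\bscp{\hat{\eta}_{x_0}}{v_0}_\infty$ for every $v_0\in\cR$, and likewise $\bscp{\hat{\nu}_x}{w_0}=\bscp{\hat{\nu}_x}{w_0}_\infty$ for $\sigma$-a.e.\ $x\in\partial\O$ and every $w_0\in\cR$.

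The main step is the following. Fix $v\in\ups$ which is $p$-qscb at $x_0$ and set $v_0:=\frac{v(\cdot)}{1+\abs{\cdot}^p}\in\cR$. For $x\in\cU$ define $w^{(x)}(s):=v(sR_x^{-1})$. Since $R_x\in SO(N)$ preserves the Euclidean norm, $w^{(x)}(s)=v_0(sR_x^{-1})(1+\abs{s}^p)$ with $v_0(\,\cdot\, R_x^{-1})\in\cR$ by the rotation-invariance of $\cR$ assumed in \eqref{H3}; hence $w^{(x)}\in\ups$ and its ``$v_0$-part'' is exactly the integrand occurring in the definition of $\hat{\eta}_{x_0}$. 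Because $x_0,x\in\partial\O$, $p$-qscb at $x_0$ means $\nu(x_0)$-$p$-bqscb and $p$-qscb at $x$ means $\nu(x)$-$p$-bqscb; since $\nu(x)=R_x\nu(x_0)$, the change-of-variables in Lemma~\ref{lem:pqscbrotated}, applied in the appropriate direction, carries the $\nu(x_0)$-$p$-bqscb function $v$ to the $\nu(x)$-$p$-bqscb function $w^{(x)}=v(\,\cdot\, R_x^{-1})$, i.e.\ $w^{(x)}$ is $p$-qscb at $x$. Then \eqref{BGDM}, the defining property of $\BGDM$, applies to $w^{(x)}$ and yields $\bscp{\hat{\nu}_x}{v_0(\,\cdot\, R_x^{-1})}_\infty\geq0$ for $\sigma$-a.e.\ $x\in\partial\O$, hence for $\sigma$-a.e.\ $x\in E$. (To pass the $\sigma$-a.e.\ quantifier in \eqref{BGDM} over the $x$-dependent family $\{w^{(x)}\}$ one uses \eqref{BGDM} in the form with a single $\sigma$-null exceptional set valid for all $p$-qscb integrands — as produced in the proof of Proposition~\ref{prop:nec1} via a countable dense $\cR_0\subset\cR$ — together with the approximation of $v_0(\,\cdot\, R_x^{-1})\in G_{\nu(x)}$ by members of $\cR_0\cap G_{\nu(x)}$ from Proposition~\ref{prop:qscbRseparable} and $L^\infty$-continuity of $w_0\mapsto\bscp{\hat{\nu}_x}{w_0}_\infty$.)

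Assembling the three steps then finishes the proof: by the first two steps and the definition of $\hat{\eta}_{x_0}$,
\begin{align*}
	\int_{\com\setminus\RR^{M\times N}}\frac{v(s)}{1+\abs{s}^p}\,\hat{\eta}_{x_0}(ds)
	&=\bscp{\hat{\eta}_{x_0}}{v_0}_\infty
	=\bscp{\hat{\eta}_{x_0}}{v_0}\\
	&=\frac{1}{\sigma(E)}\int_E\bscp{\hat{\nu}_x}{v_0(\,\cdot\, R_x^{-1})}_\infty\,\sigma(dx)\geq0,
\end{align*}
since $\sigma(E)>0$, $\sigma\geq0$, and the integrand is nonnegative for $\sigma$-a.e.\ $x\in E$ by the third step; this is exactly \eqref{pavcm-2}. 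I expect the main obstacle to be pinning down the orientation of the rotation in the third step — verifying that composition with $R_x^{-1}$ (and not $R_x$) is what turns ``$v$ is $\nu(x_0)$-$p$-bqscb'' into ``$v(\,\cdot\, R_x^{-1})$ is $\nu(x)$-$p$-bqscb'' through Lemma~\ref{lem:pqscbrotated} — together with the (lesser) bookkeeping needed to invoke \eqref{BGDM} simultaneously for the whole family $\{w^{(x)}\}_{x\in E}$.
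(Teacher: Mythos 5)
Your proposal is correct and follows the same route as the paper's (two-line) proof: Lemma~\ref{lem:pqscbrotated} transports the $p$-qscb property from $x_0$ to $x$, and \eqref{BGDM} then gives nonnegativity of the integrand defining $\hat{\eta}_{x_0}$; your additional verifications (well-definedness of $\hat{\eta}_{x_0}$ via Riesz representation, $\hat{\eta}_{x_0}(\RR^{M\times N})=0$, and uniformity of the $\sigma$-null set over the $x$-dependent family of integrands) are details the paper leaves implicit. The orientation question you flag is real but harmless: with the stated convention $\nu(x)=R_x\nu(x_0)$, Lemma~\ref{lem:pqscbrotated} literally yields that $v(\,\cdot\,R_x)$ (rather than $v(\,\cdot\,R_x^{-1})$) is $p$-qscb at $x$, a relabeling $R_x\leftrightarrow R_x^{-1}$ that is present in the paper's own formulation as well and does not affect the substance of the argument.
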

\begin{proof}
For every $v\in\ups$ which is $p$-qscb at $x_0$, $s\mapsto v(s R_x^{-1})$ is $p$-qscb at $x$ by Lemma~\ref{lem:pqscbrotated}. Hence, \eqref{BGDM}
implies \eqref{pavcm-2} by the definition of $\hat{\eta}_x$ with $v_0(s):=\frac{v(s)}{1+\abs{s}^p}$.
\end{proof}
Using this averaging procedure, we can weak$^*$-approximate general measures in $\BGDM$ by measures whose restriction to the boundary is supported on a finite number of points:
\begin{prop}\label{prop:weakstarapprox}
Let $(\sigma,\hat{\nu})\in \BGDM$.
Then for every $n\in\NN$, there exists a finite set $J(n)\subset \NN$ and $(\theta_n,\hat{\eta}_n)\in \BGDM$
such that $\theta_n|_\O=\sigma|_\O$, $\hat{\eta}_{n,x}=\hat{\nu}_x$ for $\sigma$-a.e.~$x\in\O$, 
$$
	\theta_n|_{\partial\O}=\sum_{j\in J(n)} a_{n,j}\delta_{x_{n,j}}|_{\partial\O},
$$ 
with suitably chosen points $x_{n,j}\in \partial\O$ and coefficients $a_{n,j}\geq 0$, $j\in J(n)$, 
where $\delta_{x_{n,j}}$ denotes the Dirac mass at $x_{n,j}$ in $\overline{\O}$,
and
$$
	\int_{\overline{\O}} \varphi(x) \bscp{\hat{\eta}_{n,x}}{v_0}\,\theta_n(dx)
	\underset{n\to\infty}{\To} 
	\int_{\overline{\O}} \varphi(x) \bscp{\hat{\nu}_{x}}{v_0}\,\sigma(dx)
$$
for every $\varphi\in C(\overline{\O})$ and every $v_0\in \cR$. 
\end{prop}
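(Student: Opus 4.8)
The plan is to build $(\theta_n,\hat\eta_n)$ by localizing on $\partial\O$ and then averaging with the help of Proposition~\ref{prop:averagecm}, which is tailor-made to preserve \eqref{BGDM}. Fix $n$. Since $\partial\O$ is a compact manifold of class $C^1$, cover it by finitely many relatively open patches of diameter at most $1/n$, refine this cover to a Borel partition $\{E_{n,j}\}_{j\in J(n)}$ of $\partial\O$ with $\operatorname{diam}E_{n,j}\le 1/n$, and discard the pieces of $\sigma$-measure zero, so that $\sigma(E_{n,j})>0$ for all $j\in J(n)$. In each $E_{n,j}$ pick $x_{n,j}\in\partial\O$, set $a_{n,j}:=\sigma(E_{n,j})\ge 0$, and let $R_{n,j}(x)\in SO(N)$ be the minimal rotation carrying $\nu(x_{n,j})$ onto $\nu(x)$ for $x\in E_{n,j}$, i.e.~the rotation by the angle between these two unit vectors in the plane they span, the identity on its orthogonal complement; this is well defined and continuous on $E_{n,j}$ once $n$ is large, because the Gauss map $\nu$ is uniformly continuous on $\partial\O$. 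Then $R_{n,j}(x_{n,j})=I$, $\nu(x)=R_{n,j}(x)\nu(x_{n,j})$, and
\[
	\rho_n:=\max_{j\in J(n)}\ \sup_{x\in E_{n,j}}\abs{R_{n,j}(x)-I}\ \To\ 0\qquad(n\to\infty),
\]
since the operator norm of such a minimal rotation is bounded by a fixed multiple of the distance between the two vectors, hence by $C\,\omega_\nu(1/n)$ with $\omega_\nu$ a modulus of continuity of $\nu$. Finally, applying Proposition~\ref{prop:averagecm} with $x_0:=x_{n,j}$, $E:=E_{n,j}$ and the family $R_{n,j}$ yields $\hat\eta_{x_{n,j}}:=\hat\eta_{x_{n,j},E_{n,j}}\in\rca(\com)$.

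Now put $\theta_n:=\sigma|_\O+\sum_{j\in J(n)}a_{n,j}\delta_{x_{n,j}}$, let $\hat\eta_{n,x}:=\hat\nu_x$ for $x\in\O$ and $\hat\eta_{n,x_{n,j}}:=\hat\eta_{x_{n,j}}$, with $\hat\eta_{n,\cdot}$ chosen arbitrarily on the $\theta_n$-null set $\partial\O\setminus\{x_{n,j}\}$ (and coinciding base points merged). First, $(\theta_n,\hat\eta_n)\in\DM$ by Proposition~\ref{prop:DMchar}: in the interior $(\theta_n,\hat\eta_n)$ agrees with $(\sigma,\hat\nu)$, which settles (i)--(iii) there; and unwinding the defining identity of Proposition~\ref{prop:averagecm} exhibits $\hat\eta_{x_{n,j}}$ as the $\sigma$-average over $E_{n,j}$ of the pushforwards of $\hat\nu_x$ under the homeomorphism of $\com$ that extends $s\mapsto sR_{n,j}(x)^{-1}$ (which exists by \eqref{H3}(iii) and maps $\com\setminus\RR^{M\times N}$ onto itself), so $\hat\eta_{x_{n,j}}\ge 0$, $\hat\eta_{x_{n,j}}(\com)=\sigma(E_{n,j})^{-1}\int_{E_{n,j}}\hat\nu_x(\com)\,\sigma(dx)=1$, and $\hat\eta_{x_{n,j}}(\RR^{M\times N})=0$ because $\hat\nu_x(\RR^{M\times N})=0$ for $\sigma$-a.e.~$x\in\partial\O$ (Remark~\ref{rem:DMprop}); in particular $\bar\theta_n=\bar\sigma|_\O$ is absolutely continuous, which gives (ii) and (iv). Second, as $\theta_n|_{\partial\O}$ is purely atomic, condition \eqref{BGDM} for $(\theta_n,\hat\eta_n)$ amounts to requiring $0\le\int_{\com\setminus\RR^{M\times N}}\frac{v(s)}{1+\abs{s}^p}\,\hat\eta_{x_{n,j}}(ds)$ for every $v\in\ups$ that is $p$-qscb at $x_{n,j}$, which is exactly the conclusion \eqref{pavcm-2} of Proposition~\ref{prop:averagecm}. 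Hence $(\theta_n,\hat\eta_n)\in\BGDM$, and by construction $\theta_n|_\O=\sigma|_\O$, $\hat\eta_{n,x}=\hat\nu_x$ for $\sigma$-a.e.~$x\in\O$, and $\theta_n|_{\partial\O}=\sum_{j\in J(n)}a_{n,j}\delta_{x_{n,j}}|_{\partial\O}$.

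For the weak$^*$ convergence, fix $\varphi\in C(\overline\O)$ and $v_0\in\cR$. Since $\abs{\partial\O}=0$, the interior parts of $\int_{\overline\O}\varphi\,\bscp{\hat\eta_{n,x}}{v_0}\,\theta_n(dx)$ and of $\int_{\overline\O}\varphi\,\bscp{\hat\nu_x}{v_0}\,\sigma(dx)$ coincide, so only the boundary parts need to be compared. Writing $\jmath(x)$ for the index with $x\in E_{n,\jmath(x)}$, the defining identity of $\hat\eta_{x_{n,j}}$ gives
\[
	\sum_{j\in J(n)}\varphi(x_{n,j})\,\bscp{\hat\eta_{x_{n,j}}}{v_0}\,a_{n,j}
	=\int_{\partial\O}\varphi\big(x_{n,\jmath(x)}\big)\,\bscp{\hat\nu_x}{v_0(\,\cdot\,R_{n,\jmath(x)}(x)^{-1})}\,\sigma(dx),
\]
which I compare with $\int_{\partial\O}\varphi(x)\,\bscp{\hat\nu_x}{v_0}\,\sigma(dx)$. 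For $\sigma$-a.e.~$x$ one has $\hat\nu_x(\com)=1$, so the difference of the two integrands is at most $\abs{\varphi(x_{n,\jmath(x)})-\varphi(x)}\,\norm{v_0}_{L^\infty}+\norm{\varphi}_{L^\infty}\sup_{s\in\RR^{M\times N}}\abs{v_0(sR_{n,\jmath(x)}(x)^{-1})-v_0(s)}$; here the first summand is bounded by $\norm{v_0}_{L^\infty}$ times the modulus of continuity of $\varphi$ evaluated at $1/n$, and, using \eqref{v0ltuc} together with $\abs{sR^{-1}}=\abs{s}$ and $\abs{I-R^{-1}}=\abs{I-R}$ for $R\in SO(N)$, the second is bounded by $\norm{\varphi}_{L^\infty}\,\tilde\alpha(\rho_n)$, where $\tilde\alpha$ is the nondecreasing envelope of $\alpha(v_0)$ from \eqref{v0ltuc} (still continuous at $0$). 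Since $\rho_n\to0$ and $\varphi$ is uniformly continuous on $\overline\O$, integrating over $\partial\O$ shows that the difference of the boundary parts is $O\big(\omega_\varphi(1/n)+\tilde\alpha(\rho_n)\big)\,\sigma(\partial\O)\to0$, which is the claim.

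The main obstacle is the construction of the rotation fields $R_{n,j}$ together with the bound $\rho_n\to0$: one must verify that the minimal rotation between two unit vectors depends continuously on them away from antipodal pairs and has operator norm controlled by their distance, and that on a compact $C^1$-boundary a finite Borel partition of mesh at most $1/n$ refining a suitable patch cover can be produced for every $n$. The rest---checking the conditions of Proposition~\ref{prop:DMchar}, deducing \eqref{BGDM} from \eqref{pavcm-2}, and the uniform-continuity estimate behind the weak$^*$ limit---is routine once \eqref{v0ltuc} is exploited in the form above.
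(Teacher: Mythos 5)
Your proposal is correct and follows essentially the same route as the paper: partition $\partial\O$ into finitely many small pieces $E_{n,j}$ (the paper uses dyadic cubes of side $2^{-n}$, you use a mesh-$1/n$ Borel partition), pick base points and a continuous rotation field $R_{n,j}(x)$ aligning the normals with $\sup|R_{n,j}(x)-I|\to 0$, average via Proposition~\ref{prop:averagecm} to get atoms preserving \eqref{BGDM}, and conclude the weak$^*$ convergence from the uniform continuity of $\varphi$ and the uniform continuity \eqref{v0ltuc} of $v_0$ under right-multiplication by rotations close to the identity. Your write-up is somewhat more explicit than the paper's (notably in checking Proposition~\ref{prop:DMchar} for $(\theta_n,\hat\eta_n)$ and in quantifying the convergence estimate), but there is no substantive difference in the argument.
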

\begin{proof}
For each $n\in\NN$ cover $\RR^N$ with a family of pairwise disjoint cubes of side length $2^{-n}$, translates of $Q_{n,0}:=[0,2^{-n})^N$, and let 
$Q_{n,j}$, $j\in J(n)$, be the collection of those cubes $Q$ in the family that satisfy $\sigma(Q\cap \partial\O)>0$.
Moreover, for each $n$ and each $j\in J(n)$ let $E_{n,j}:=Q_{n,j}\cap \partial\O$, (arbitrarily) choose a point $x_{n,j}\in E_{n,j}$, 
and choose a family of rotations $(R_{n,j}(x))_{x\in E_{n,j}}\subset \RR^{N\times N}$ such that
$R_{n,j}(x_{n,j})=I$, $\nu(x)=R_{n,j}(x)\nu(x_{n,j})$ for every $x\in E_{n,j}$, where $\nu(x)$ denotes the outer normal at $x\in\partial\O$, 
$x\mapsto R_{n,j}(x)$ is continuous on $\overline{E}_{n,j}$ and 
\begin{equation}\label{pwsapprox-2}
	\sup_{j\in J(n)}\sup_{x\in E_{n,j}} \abs{R_{n,j}(x)^{-1}-I} \underset{n\to\infty}{\To} 0,
\end{equation}
which is possible since $\partial\O$ is of class $C^1$, at least if $n$ is large enough.
We define
$$
	\theta_n(dx):=\chi_{\partial\O}(x)\Big(\sum_{j\in J(n)} \sigma(E_{n,j})\delta_{x_{n,j}}(dx)\Big)+
	\chi_{\O}(x)\sigma(dx),
$$
and, for every $v_0\in\cR$,
$$
	\bscp{\hat{\eta}_{n,x}}{v_0}
	:=\left\{\begin{alignedat}[c]{2}
	&\frac{1}{\sigma(E_{n,j})}\int_{E_{n,j}} 
	\bscp{\hat{\nu}_y}{v_0(\,\cdot\, R_{n,j}(y)^{-1})}	\,\sigma(dy)
	&\quad&
	\text{if $x=x_{n,j}$},\\
	&\scp{\hat{\nu}_{x}}{v_0} &\quad&\text{elsewhere.}
	\end{alignedat}\right.
$$
Here, note that for $x\in \partial\O\setminus \{x_{n,j}\mid j\in J(n)\}$, the definition of 
$\hat{\eta}_{n,x}$ does not matter since
$\theta_n\big(\partial\O\setminus \{x_{n,j}\mid j\in J(n)\}\big)=0$.
Clearly, $(\theta_n,\hat{\eta}_n)\in \DM$, and $(\theta_n,\hat{\eta}_n)\in \BGDM$ by Proposition~\ref{prop:averagecm}.
Finally, observe that by \eqref{pwsapprox-2}, also using that
$\varphi$ is uniformly continuous on $\partial\O$ and that $v_0$ is uniformly continuous in the sense of \eqref{v0ltuc},
\begin{align*}
	&\int_{\partial\O} \varphi(x) \bscp{\hat{\eta}_{x}}{v_0}\,\theta_n(dx)\\
	&=\int_{\partial\O} \bigg(\sum_{j\in J(n)} \varphi(x_{n,j}) \chi_{E_{n,j}}(y)\bscp{\hat{\nu}_y}{v_0(\,\cdot\, R_{n,j,y}^{-1})}
	\bigg)\,\sigma(dy),\\
	&\to \int_{\partial\O}\varphi(y)\bscp{\hat{\nu}_y}{v_0}\,\sigma(dy)
\end{align*}
as $n\to\infty$.
\end{proof}
Our final ingredient is the following result of \cite{KaKru08a}, which states that subsets of $\DM$ defined 
by constraints on the generating sequences are always (sequentially) weak$^*$-closed (essentially because one can always choose an appropriate diagonal subsequence).
\begin{prop}[Lemma 3.3 in \cite{KaKru08a}]\label{prop:diag}
Let $S\subset L^p(\O;\RR^{M\times N})$ be an arbitrary bounded subset, and let $\DMS$ denote the subset of $\DM$ that consists of all DiPerna-Majda measures generated by a sequence $(U_n)\subset S$. If $(\sigma,\hat{\nu})\in \DM$ and 
$(\sigma_k,\hat{\nu}_k)$ is a sequence in $\DMS$ such that 
$(\sigma_k,\hat{\nu}_k)\rightharpoonup^* (\sigma,\hat{\nu})$, i.e.,
$$
	\int_{\overline{\O}} \varphi(x) \bscp{\hat{\nu}_k}{v_0}\,\sigma_k(dx)
	~\underset{k\to\infty}{\To}~
	\int_{\overline{\O}} \varphi(x) \bscp{\hat{\nu}}{v_0}\,\sigma(dx)
$$
for every $\varphi\in C(\overline{\O})$ and every $v_0\in\cR$, then $(\sigma,\hat{\nu})\in \DMS$.
\end{prop}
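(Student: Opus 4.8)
The plan is a standard diagonalization. First I would fix a countable set $\{v_{0,i}\}_{i\in\NN}$ dense in $\cR$ (which exists since $\cR$ is separable by~\eqref{H3}) and a countable set $\{\varphi_j\}_{j\in\NN}$ dense in $C(\overline{\O})$ (which exists since $\overline{\O}$ is a compact metric space), and I set $v_i(s):=v_{0,i}(s)(1+\abs{s}^p)$. The next thing to record is a collection of uniform bounds: $\int_\O(1+\abs{U}^p)\,dx\le C$ for every $U\in S$, with $C<\infty$ the supremum of $L^p$-norms over $S$; moreover $\hat\nu_{k,x}$ and $\hat\nu_x$ are probability measures for $\sigma_k$- resp.~$\sigma$-a.e.~$x$ since $(\sigma_k,\hat\nu_k),(\sigma,\hat\nu)\in\DM$, so $\abs{\bscp{\hat\nu_{k,x}}{v_0}}\le\norm{v_0}_{L^\infty}$ and $\abs{\bscp{\hat\nu_x}{v_0}}\le\norm{v_0}_{L^\infty}$ a.e., and $\sup_k\sigma_k(\overline\O)<\infty$ (test the hypothesis with $\varphi\equiv1$, $v_0\equiv1\in\cR$). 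Consequently the bilinear maps $(\varphi,v_0)\mapsto\int_\O\varphi\,v(U)\,dx$ (over $U\in S$), $(\varphi,v_0)\mapsto\int_{\overline\O}\varphi\,\bscp{\hat\nu_{k,x}}{v_0}\,\sigma_k(dx)$ (over $k\in\NN$), and $(\varphi,v_0)\mapsto\int_{\overline\O}\varphi\,\bscp{\hat\nu_x}{v_0}\,\sigma(dx)$ are all equibounded by a constant times $\norm{\varphi}_{L^\infty}\norm{v_0}_{L^\infty}$.

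Next I would carry out the extraction. For each $k$, pick a sequence $(U^{(k)}_n)_n\subset S$ generating $(\sigma_k,\hat\nu_k)$. Applying the defining convergence~\eqref{DMgenerate} to the finitely many pairs $(\varphi_j,v_{0,i})$, $i,j\le k$, one finds $n_k$ with
\[
	\Bigabs{\int_\O\varphi_j\,v_i(U^{(k)}_{n_k})\,dx-\int_{\overline\O}\varphi_j\,\bscp{\hat\nu_{k,x}}{v_{0,i}}\,\sigma_k(dx)}<\frac1k\qquad\text{for all }i,j\le k .
\]
Set $U_k:=U^{(k)}_{n_k}\in S$. For fixed $(i,j)$ and all $k\ge\max\{i,j\}$, the triangle inequality together with this estimate and the assumed weak$^*$ convergence $\int_{\overline\O}\varphi_j\,\bscp{\hat\nu_{k,x}}{v_{0,i}}\,\sigma_k(dx)\to\int_{\overline\O}\varphi_j\,\bscp{\hat\nu_x}{v_{0,i}}\,\sigma(dx)$ yields $\int_\O\varphi_j\,v_i(U_k)\,dx\to\int_{\overline\O}\varphi_j\,\bscp{\hat\nu_x}{v_{0,i}}\,\sigma(dx)$ as $k\to\infty$.

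Finally I would remove the restriction to the dense families. Given arbitrary $\varphi\in C(\overline\O)$, $v_0\in\cR$ and $\eps>0$, choose $\varphi_j,v_{0,i}$ with $\norm{\varphi-\varphi_j}_{L^\infty}+\norm{v_0-v_{0,i}}_{L^\infty}<\eps$; the equiboundedness from the first step, together with the uniform $L^p$-bound on $(U_k)\subset S$, controls $\int_\O\varphi\,v(U_k)\,dx-\int_\O\varphi_j\,v_i(U_k)\,dx$ as well as the analogous difference $\int_{\overline\O}\varphi\,\bscp{\hat\nu_x}{v_0}\,\sigma(dx)-\int_{\overline\O}\varphi_j\,\bscp{\hat\nu_x}{v_{0,i}}\,\sigma(dx)$ on the limit side, uniformly in $k$. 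Letting $k\to\infty$ and then $\eps\to0$ gives $\int_\O\varphi(x)\,v(U_k(x))\,dx\to\int_{\overline\O}\varphi(x)\,\bscp{\hat\nu_x}{v_0}\,\sigma(dx)$ for every admissible $\varphi$ and $v_0$, i.e.~$(U_k)$ generates $(\sigma,\hat\nu)$ by~\eqref{DMgenerate}, hence $(\sigma,\hat\nu)\in\DMS$. I do not expect a genuine obstacle here; the one point that needs a little care is making sure the $L^p$-bound entering the equicontinuity estimate in the last step is uniform both over $k$ and over the inner index $n$ of each $(U^{(k)}_n)_n$ — which is automatic, because every map used in the construction belongs to the fixed bounded set $S$.
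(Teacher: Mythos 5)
Your argument is correct and is precisely the diagonal-subsequence argument that the paper alludes to (it imports this statement from \cite{KaKru08a} without proof, remarking only that it holds ``essentially because one can always choose an appropriate diagonal subsequence''); your uniform bounds via $\sup_k\sigma_k(\overline{\O})<\infty$ and the boundedness of $S$ make the density step rigorous. No gaps.
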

\begin{remark}\label{rem:diag}
Note that since both $C(\overline{\O})$ and $\cR$ are separable, the weak$^*$ topology is metrizable on bounded subsets of $\DM$, 
and weak$^*$-closed is equivalent to weak$^*$-sequentially closed.
Moreover, Proposition~\ref{prop:diag} also holds if $A$ is not bounded (e.g., $A:=\{\nabla u\mid u\in W^{1,p}(\O;\RR^M\}$):
If for each $k$, $(\sigma_k,\hat{\nu}_k)$ is generated by $(U_{k,n})_n\subset L^p(\Omega;A)$, then
$$
	\lim_{k\to\infty}\lim_{n\to\infty}\int_\O (1+\abs{U_{k,n}}^p)\,dx
	=\lim_{k\to\infty} \int_{\overline{O}} \bscp{\hat{\nu}_k}{1} \sigma_k(dx)
	=\int_{\overline{O}} \scp{\hat{\nu}}{1} \sigma(dx)<\infty.
$$	
Hence, passing to subsequences if necessary, we may assume that the generating sequences are equibounded,
and we can apply Proposition~\ref{prop:diag} with an appropriate bounded subset of $A$.
\end{remark}
We are now ready to prove the anticipated sufficient condition for gradient structure of the boundary part of a DiPerna-Majda measure, as defined in \eqref{DMsplit}:
\begin{proof}[Proof of Theorem~\ref{thm:suffboundary}]
Let $(\sigma,\hat{\nu})\in \BGDM$.
We have to show that $(\sigma_b,\hat{\nu}_b)\in \GDM$.
In view of Proposition~\ref{prop:GDMsplit}, we may assume w.l.o.g.~that $(\sigma,\hat{\nu})=(\sigma_b,\hat{\nu}_b)$, i.e., that 
$\sigma(dx)=dx$ in $\Omega$ and $\hat{\nu}_x(ds)=\delta_0(ds)$ for $x\in\Omega$. 
All the other DiPerna-Majda measure introduced below 
are understood to have this property as well, and for this reason, we will only define them on $\partial\Omega$.

By Proposition~\ref{prop:diag} and Remark~\ref{rem:diag}, it suffices to show that for each $n$, $(\theta_n,\hat{\eta}_n)\in \GDM$, where
$(\theta_n,\hat{\eta}_n)\in \BGDM$ is defined in Proposition~\ref{prop:weakstarapprox}. 
Recall that 
$$
	\begin{alignedat}{2}
	&\theta_{n}(dx)&&=\sum_{j\in J(n)} a_{n,j}\delta_{x_{n,j}}(dx)~~\text{on $\partial\Omega$},\\
	\end{alignedat}
$$
with a finite set $J(n)$, coefficients $a_{n,j}>0$ and points $x_{n,j}\in\partial\Omega$.
In particular, $\hat{\eta}_{n,x}$ is fully determined (i.e., for $\sigma_n$-a.e.~$x$) by $\hat{\eta}_{n,x_{n,j}}$, $j\in J(n)$.
Since $(\theta_n,\hat{\eta}_n)\in \BGDM$, 
for each $j\in J(n)$, we have that
$$
	\hat{\mu}_{n,j}:=a_{n,j}\hat{\nu}_{x_{n,j}}\in A_{x_{n,j}},
$$
by definition of $\BGDM$ and the set $A_x$ introduced in Step 1. 
By Proposition~\ref{prop:HxDenseInAx}, 
there exists a sequence $(\hat{\delta}_{n,j,k})_{k}\subset H_{x_{n,j}}$ 
which weak$^*$-converges to $\hat{\delta}_{n,j}$ in $\rca(\com)$ as $k\to\infty$. 
Accordingly, the corresponding sequence of DiPerna-Majda measures $(\theta_{n,k},\hat{\eta}_{n,k})_k \subset \DM$, defined by
$$
	\begin{alignedat}{2}
	&\theta_{n,k}(dx)&&:=\sum_{j\in J(n)}\hat{\delta}_{n,j,k}(\com)\delta_{x_{n,j}}(dx)~~\text{on $\partial\Omega$},\\
	&\hat{\eta}_{n,k,x}(ds)&&:=\frac{1}{\hat{\delta}_{n,j,k}(\com)}\hat{\delta}_{n,j,k}(ds)~~\text{if $x=x_{n,j}$ for some $j\in J(n)$,}
	\end{alignedat}
$$
weak$^*$-converges to $(\theta_n,\hat{\eta}_n)$ in $\DM$. Hence, by Propositon~\ref{prop:diag} and Remark~\ref{rem:diag}, it suffices to show that $(\theta_{n,k},\hat{\eta}_{n,k})\in \GDM$. 

By definition of $H_{x_{n,j}}$, for each $j\in J(n)$ there exists a bounded 
sequence $(u_{j,m})_m\subset W^{1,p}(\O;\RR^M)$ (also depending on $n$ and $k$) with support shrinking to $x_{n,j}$
such that $(\nabla u_{j,m})_m$ generates $\hat{\delta}_{n,j,k}$, which implies that
\begin{equation}\label{psuff-1}
	\begin{aligned}
	\lim_{m\to\infty}\int_\O \varphi(x) \big[v(\nabla u_{j,m}(x))-v(0)\big]\,dx
	&=\varphi(x_{n_j})\bscp{\hat{\delta}_{n,j,k}}{v_0}\\
	&=\int_{\partial\O} \varphi(x) \bscp{\hat{\eta}_{n,k,x}}{v_0} \,\delta_{x_{n,j}}(dx),
	\end{aligned}
\end{equation}
for every $\varphi\in C(\overline{\O})$ and every $v\in \ups$, where $v_0:=\frac{v(\cdot)}{1+\abs{\cdot}^p}$. 
We define
$$
	u_{m}:=\sum_{j\in J(n)} u_{j,m}\in W^{1,p}(\O;\RR^M),
$$
which is a sum of functions with pairwise disjoint support (at least for large $m$).
Summing over $j$ in \eqref{psuff-1} yields that
\begin{equation}\label{psuff-2}
	\begin{aligned}
	&\lim_{m\to\infty}\int_\O \varphi(x) \big[v(\nabla u_{m}(x))-v(0)\big]\,dx\\
	&=\lim_{m\to\infty}\sum_{j\in J(n)}\int_\O \varphi(x) \big[v(\nabla u_{j,m}(x))-v(0)\big]\,dx\\
	&=\int_{\partial{\O}} \varphi(x) \bscp{\hat{\eta}_{n,k,x}}{v_0} \,\theta_{n,k}(dx).
	\end{aligned}
\end{equation}
Since $\theta_{n,k}(dx)=dx$ in $\O$, $\hat{\eta}_{n,k,x}(ds)=\delta_0(ds)$ for $x\in \O$ and $v_0(0)=v(0)$, 
\begin{equation}\label{psuff-3}
	\int_\O \varphi(x)v(0)\,dx=\int_{\O} \varphi(x) \scp{\hat{\eta}_{n,k,x}}{v_0} \,\theta_{n,k}(dx).
\end{equation}
Plugging \eqref{psuff-3} into \eqref{psuff-2}, we obtain that
\begin{equation*}
	\begin{aligned}
	\lim_{m\to\infty}\int_\O \varphi(x) v(\nabla u_{m}(x))\,dx
	&=\int_{\overline\O} \varphi(x) \scp{\hat{\eta}_{n,k,x}}{v_0} \,\theta_{n,k}(dx).
	\end{aligned}
\end{equation*}
This means that $(\nabla u_{m})_m$ generates $(\theta_{n,k},\hat{\eta}_{n,k})$, and consequently, 
$(\theta_{n,k},\hat{\eta}_{n,k})\in \GDM$.
\end{proof}

\section{Proof of the relaxation result}\label{sec:relax}

\begin{proof}[Proof of Theorem~\ref{relaxation}]~\\
{\bf (i):}
Suppose that $\{u_n\}$ already realizes the $\liminf$. Fix $x\in\bar\O$, apply Theorem~\ref{thm:GDMchar} to $v:=h(x,\cdot)$ and integrate over $\bar\O$. In view of  (ii), (iii), and (iv) in  Theorem~\ref{thm:GDMchar} we get 
\begin{equation*}
\begin{aligned}
\int_\O Qh(x,\nabla u(x))\,dx  
~\le~ & \int_\O\int_{\com}\frac{h(x,s)}{1+|s|^p}\nu_x(ds)d_\sigma(x)\,dx\nonumber\\
&+ \int_{\bar\O}\int_{\com\setminus \RR^{M\times N}} \frac{h(x,s)}{1+|s|^p}\nu_x(ds)\sigma_s(dx)\\
~=~&\int_{\bar\O}\int_\com \frac{h(x,s)}{1+|s|^p}\nu_x(ds)\sigma(dx)\nonumber\\
~=~&\lim_{n\to\infty}\int_\O h(x,\nabla u_n(x))\,dx\nonumber\ . 
\end{aligned}
\end{equation*}
Here, note that for each $x\in \overline{\O}$, either $Qh(x,\cdot)\equiv-\infty$ or $Qh(x,\cdot)>-\infty$. In the former case, we cannot use Theorem~\ref{thm:GDMchar} (ii), but the corresponding estimate above then becomes trivial. 


\noindent{\bf (ii):} 
Below, we use the shorthand $f\wedge g:=\max\{f,g\}$, pointwise for real-valued functions. 
Let $\eps>0$. For $m\in \NN$ set
$$
	V_m(x,s):=\frac{1}{m}\abs{s}^p-m\quad\text{for $x\in\Omega$ and $s\in\RR^{M\times N}$. }
$$
Since $h\wedge V_m$ is $p$-coercive for each $m$, there exists a sequence $(\tilde{u}_{m,n})_n\subset \tilde{u}+W_0^{1,p}(\O;\RR^M)$ such that
$\tilde{u}_{m,n}\rightharpoonup \tilde{u}$ in $W^{1,p}$ as $n\to \infty$ and
$$
	\int_\O (V_m\wedge h)(x,\nabla \tilde{u}_{m,n})\,dx\underset{n\to\infty}{\To} \int_\O Q(V_m\wedge h)(x,\nabla\tilde{u})\,dx,
$$
by the standard relaxation result, see for instance \cite[Th.~9.8]{Da89B}. 
By the trivial estimate $h\leq V_m\wedge h$, this implies that
\begin{equation}\label{trelaxlimsup0}
	\limsup_{n\to\infty} \int_\O h(x,\nabla \tilde{u}_{m,n})\,dx
	~\leq ~ \int_\O Q(V_m\wedge h)(x,\nabla \tilde{u})\,dx
\end{equation}
{\bf \underline{Case 1}: $\mbf{\abs{E}=0}$, i.e., $\mbf{Qh(x,\cdot)>-\infty}$ for a.e.~$\mbf{x\in\O}$.}\\ 
We split the integral on the right hand side of \eqref{trelaxlimsup0} into integrals over the set
$$
	G(m,\eps):=\mysetr{x\in\O}{Q(V_m\wedge h)(x,\nabla\tilde{u}(x))\leq Q h(x,\nabla\tilde{u}(x))+\frac{\eps}{2\abs{\O}}},
$$
and its complement. Thus, we get that
\begin{equation}\label{trelaxlimsup1}
	\begin{aligned}
	&\limsup_{n\to\infty} \int_\O h(x,\nabla \tilde{u}_{m,n})\,dx\\
	&\quad\leq ~ \int_{G(m,\eps)} Q h(x,\nabla\tilde{u})\,dx+\frac{\eps}{2}+ \int_{\O\setminus G(m,\eps)} C(\nabs{\nabla \tilde{u}}^p+1)\,dx,
	\end{aligned}
\end{equation}
where we also used that $h(x,s)\leq C(\nabs{s}^p+1)$ for some constant $C>0$, and $Q(V_m\wedge h)$ inherits this upper bound, at least for large $m$.
Observe that $G(m,\eps)\subset G(m+1,\eps)$. Moreover, by the representation formula for quasiconvex envelopes \eqref{Qfrep}, 
there exists a function $\varphi\in W_0^{1,\infty}(\O;\RR^M)$ such that 
$$
	\frac{1}{\abs{\O}}\int_\O h(x,\nabla \tilde{u}(x)+\nabla \varphi(y))\,dy\leq Qh(x,\nabla \tilde{u}(x))+\eps.
$$
In particular, $x\in G(m,\eps)$ if $m$ is large enough so that $(V_m\wedge h)(x,\nabla \tilde{u}(x)+s)= h(x,\nabla \tilde{u}(x)+s)$ for every 
$\abs{s}\leq \norm{\nabla \varphi}_\infty$.
Hence, we have that $\bigcup_{m\in\NN}G(m,\eps)=\Omega$,
and
\begin{equation}\label{trelaxlimsup2}
	\int_{G(m,\eps)} Q h(x,\nabla\tilde{u})\,dx\underset{m\to\infty}{\To} \int_{\O} Q h(x,\nabla\tilde{u})\,dx
\end{equation} 
by monotone/dominated convergence (for the negative and the positive part of the integrand, respectively). For the same reason, 
\begin{equation}\label{trelaxlimsup3}
	\int_{\O\setminus G(m,\eps)} C(\nabs{\nabla \tilde{u}}^p+1)\,dx\underset{m\to\infty}{\To} 0.
\end{equation} 
Combined,\eqref{trelaxlimsup1}--\eqref{trelaxlimsup3} imply that there exists $M(\eps)\in\NN$ such that
$$
	\limsup_{n\to\infty} \int_\O h(x,\nabla \tilde{u}_{m,n})\,dx\leq \int_{\O} Q h(x,\tilde{u})\,dx+\eps,
$$
for every $m\geq M(\eps)$.\\
{\bf \underline{Case 2}: $\mbf{\abs{E}>0}$ for $\mbf{E=\{x\in\O\mid \mbf{Qh(x,\cdot)=-\infty}\}}$.}\\
We now define
$$
	G(m,\eps):=\mysetr{x\in E}{Q(V_m\wedge h)(x,\nabla\tilde{u}(x))\leq -\frac{4}{\eps\abs{E}}}
$$
Once more splitting the integral on the right hand side of \eqref{trelaxlimsup0}, we now get that
\begin{equation}\label{trelaxlimsup5}
	\begin{aligned}
	&\limsup_{n\to\infty} \int_\O h(x,\nabla \tilde{u}_{m,n})\,dx\\
	&\quad\leq ~ \int_{G(m,\eps)} -\frac{4}{\eps\abs{E}} \,dx+ \int_{\O\setminus G(m,\eps)} C(\nabs{\nabla \tilde{u}}^p+1)\,dx.
	\end{aligned}
\end{equation}
As before, $G(m,\eps)\subset G(m+1,\eps)$, and due to \eqref{Qfrep}, $\bigcup_{m\in\NN} G(m,\eps)=E$. 
For $m$ large enough, we thus have that $\abs{G(m,\eps)}\geq \frac{1}{2} \abs{E}$, and consequently,
\begin{equation*}
	\begin{aligned}
	\limsup_{n\to\infty} \int_\O h(x,\nabla \tilde{u}_{m,n})\,dx
	&\leq ~ -\frac{2}{\eps}+ \int_{\O} C(\nabs{\nabla \tilde{u}}^p+1)\,dx.
	\end{aligned}
\end{equation*}
This implies the assertion if $\eps$ is sufficiently small.
\end{proof}

\bigskip

{\bf Acknowledgment:} This work  was initiated during the stay of MK at the University of Cologne. Its support and hospitality is gratefully acknowledged.

\bibliographystyle{plain}
\bibliography{DMbd}

\end{sloppypar}

\end{document}